  \numberwithin{equation}{section}
  \newtheorem{theorem}{Theorem}[section]
\newtheorem{lemma}[theorem]{Lemma}
\newtheorem{proposition}[theorem]{Proposition}
\newtheorem{corollary}[theorem]{Corollary}
\theoremstyle{remark}
\newtheorem{Remark}[theorem]{Remark}
\newenvironment{remark}{\begin{Remark}}{\qed\end{Remark}}
\theoremstyle{definition}
\newtheorem{Definition}[theorem]{Definition}
\newenvironment{definition}{\begin{Definition}}{\qed\end{Definition}}
\newcommand{\textupmd}[1]{\textup{\textmd{#1}}}
\newcommand{\unk}{u}
\newcommand{\unkd}{\unk_{\delta}}
\newcommand{\unkde}{\unk_{\eps,\delta}}
\newcommand{\vde}{v_{\eps,\delta}}
\newcommand{\initd}{\bar u}
\newcommand{\initdd}{\bar u_{\delta}}
\newcommand{\pder}[2]{\frac{\partial #1}{\partial #2}}
\DeclareMathOperator{\Lapl}{\Delta}
\DeclareMathOperator{\supp}{supp}
\newcommand{\@di}{\textupmd{d}}
\DeclareMathOperator{\dist}{dist}
\newcommand{\abs}[1]{\lvert#1\rvert}
\newcommand{\norm}[1]{\lVert#1\rVert}
\newcommand{\norma}[2]{\norm{#1}_{#2}}
\newcommand{\scpr}{\cdot}
\newcommand{\di}{\,\@di}
\newcommand{\grad}{\operatorname{\nabla}}
\DeclareMathOperator{\sign}{sign}
\newcommand{\Oset}{\varOmega}
\newcommand{\OsetT}{\Oset_{T}}
\newcommand{\Oint}{\Oset^{1}}
\newcommand{\OintT}{\Oset^{1}_{T}}
\newcommand{\Oout}{\Oset^{2}}
\newcommand{\OoutT}{\Oset^{2}_{T}}
\newcommand{\inter}{\varGamma}
\newcommand{\interT}{\varGamma_{T}}
\newcommand{\normal}{\nu}
\newcommand{\normint}{n}
\newcommand{\bdr}[1]{\partial #1}
\newcommand{\fpcd}{b_{\delta}}
\newcommand{\fpcde}{b_{\eps,\delta}}
\newcommand{\fpc}{b}
\newcommand{\funk}{v}
\newcommand{\funkd}{\funk_{\delta}}
\newcommand{\outtr}{\textupmd{out}}
\newcommand{\inttr}{\textupmd{int}}
\newcommand{\restr}[1]{{}_{\mid #1}}
\newcommand{\Y}{\mathcal{Y}}
\newcommand{\hole}{B}
\newcommand{\eps}{\varepsilon}
\newcommand{\Hper}{H^1_{\#}}
\newcommand{\R}{\mathbb{R}}
\newcommand{\ZZ}{\mathbb{Z}}
\newcommand{\Om}{\Oset}
\newcommand{\wto}{\rightharpoonup}
\newcommand{\const}{\gamma}
\newcommand{\Memb}{\varGamma_\eps}
\newcommand{\MembT}{\varGamma_{\eps,T}}
\newcommand{\unfolde}{\mathcal{T}_\eps}
\newcommand{\unfoldee}{\mathcal{T}_{\eps,\eta}}
\newcommand{\vi}{\upsilon}
\newcommand{\odu}{u}
\newcommand{\oddens}{\varphi}
\newcommand{\odfpc}{\beta}
\newcommand{\odfund}{\varGamma}
\newcommand{\odinit}{\alpha}
\newcommand{\oddir}{\varPhi}
\newcommand{\odv}{v}
\newcommand{\finmass}{m}
\newcommand{\msr}[2]{\mu_{#2}^{#1}}
\newcommand{\newatop}[2]{\genfrac{}{}{0pt}{0}{#1}{#2}}
\begin{document}

\title
{Upscaled equations for the 
Fokker--Planck diffusion through arrays of\\
permeable and of impermeable inclusions
}%
\author{Micol Amar}
\address{Dept. Basic and Applied Sciences for Engineering\\
  Sapienza University of Rome\\
  via A.Scarpa 16, 00161 Roma, Italy
}
\email{micol.amar@sbai.uniroma1.it}

\author{Daniele Andreucci}
\email{daniele.andreucci@sbai.uniroma1.it}

\author{Emilio N.M. Cirillo}
\email{emilio.cirillo@uniroma1.it}

\thanks{MA is member of Italian
G.N.A.M.P.A.--I.N.d.A.M. 
DA and ENMC are members of Italian
G.N.F.M.--I.N.d.A.M. The authors thank 
 the PRIN 2022 project
``Mathematical modelling of heterogeneous systems"
(code 2022MKB7MM, CUP B53D23009360006).
}

\maketitle

\begin{center}
\number \year-\ifthenelse{\number\month>9}{\relax}{0}\number\month-\ifthenelse{\number\day>9}{\relax}{0}\number\day
\end{center}

\begin{abstract}
We study the Fokker--Planck diffusion equation with 
diffusion coefficient depending periodically on the 
space variable. 
Inside a periodic array of inclusions
the diffusion coefficient is reduced by a factor called the 
diffusion magnitude. 
We find the upscaled equations obtained by taking 
both the degeneration and the homogenization limits in which 
the diffusion magnitude and the scale of the 
periodicity tends, respectively, to zero. 
Different behaviors, classified as pure diffusion, diffusion 
with mass deposition, and absence of diffusion,
are found depending on the order in which the two limits 
are taken and on the ratio between the size of the inclusions and 
the scale of the periodicity. 
\end{abstract}


\section{Introduction}\label{s:intro}
We consider the Fokker--Planck diffusion equation
\cite{Amar:Andreucci:Cirillo:2021,Andreucci:Colangeli:Cirillo:Gabrielli:2019}
for an inhomogeneous material whose diffusion properties
are encoded in a diffusion coefficient which oscillates rapidly
with respect to the space variable.

The Fokker--Planck equation is the evolution equation for the 
probability density function of diffusion stochastic processes
and is studied in several different contexts, ranging from 
statistical mechanics to information theory to economics to 
mean field games, see, e.g., \cite{Risken:1996,Furioli:Pulvirenti:Terraneo:Toscani:2017,Porretta:2015}.
Here, we are interested in the fact that it 
is also one of the two possible options 
\cite{Andreucci:Colangeli:Cirillo:Gabrielli:2019,MBCS2005,S2008,S1993},
together with the Fick equation,
to describe 
the
diffusion of particles in a medium with diffusion coefficient 
depending on the spatial coordinates. This behavior has been 
observed, for instance, when diffusing particles interact with a wall
\cite{L1984,HDL2006,LBLO2001,SD2005,LCW2016}, which is unavoidable when
the process takes place inside a confined region
\cite{CKMS2016,CKMSS2016,DmiKL2019,dCGdAM2015}.
Space dependent diffusion coefficients are also considered in some 
biological models to explain selection of ionic species 
\cite{AAB2017bis,ABC2014}.

Here, 
we assume that the material has a periodic microstructure
of \textit{characteristic length} $\varepsilon$.
Moreover, we introduce the parameter $\delta$ which
controls the magnitude of the diffusion coefficient
inside an $\varepsilon$--periodic array of permeable
inclusions whose size
is $\eta\varepsilon$. The parameters $\delta$ and $\eta$ will
be respectively called \textit{diffusion intensity} or
\textit{magnitude} inside the inclusions 
and \textit{relative size} of the inclusions.
The study is conducted in a bounded domain with 
zero flux (homogeneous Neumann) boundary conditions, so that, 
in absence of sources, the total mass would be conserved. 

We are interested to study the behavior of the system
in the \textit{degeneration} limit in which $\delta\to0$, namely, when
the mass diffusion inside the inclusions becomes negligible
so that inclusions become impenetrable.
In particular, we are interested in
finding upscaled equations in the \textit{homogenization} limit
$\varepsilon\to0$.

The degenerate problem has already been approached with
homogenization techniques
in the framework of the standard Fick diffusion equation,
see, e.g., 
\cite{Cabarrubias:Donato:2016}.
We stress that in that paper the point of view is different 
from the one that we adopt here, indeed, we obtain the degenerate 
problem as the limit for vanishing diffusion magnitude $\delta$ 
inside the inclusions, whereas in the previous paper inclusions 
were treated as holes of a perforated domain with prescribed 
Dirichlet
boundary conditions.
A thorough investigation of the Fick diffusion equation 
from our standpoint will be the topic of 
a future research.

We remark that,
starting from the pioneering paper 
\cite{Cioranescu:Murat:1982,Cioranescu:Paulin:1979}, in which the problem 
has been 
posed for an elliptic equation, many studies have 
appeared in the literature, mainly within the elliptic setup,
investigating this matter 
and showing that this topic 
has attracted the attention of mathematicians over more than four decades.
Without pretending to be exhaustive, we mention, for example, 
that 
the elliptic problem is 
considered again 
with homogeneous 
\cite{Allaire:1992,Allaire:Murat:1993}
and 
non--homogeneous 
\cite{Conca:Donato:1988,Hammouda:2008}. 
Neumann 
boundary conditions on the holes. 
We mention that 
in \cite{Cioranescu:Damlamian:Griso:Onofrei:2008} the similar 
problem of an elliptic equation for a Neumann sieve is considered. 
In \cite{Jurado:Diaz:2003} the parabolic problem with Dirichlet boundary 
conditions is attacked in a general abstract setup.
In the paper \cite{Cabarrubias:Donato:2016}, which can be considered 
the parabolic and hyperbolic version of 
\cite{Cioranescu:Murat:1982},
unfolding techniques have been 
applied to the wave and the Fick diffusion equation with 
homogeneous Dirichlet boundary conditions on the small hole boundary. 

Coming back to the 
present paper, here, 
we consider
the 
Fokker--Planck diffusion equation
and 
find the
limit equations in all the possible
cases obtained by tuning 
the inclusions size $\eta\varepsilon$
and 
taking the limits $\delta$ and $\eps$ to zero in the two orders
discussed below. 
The question we pose in this paper and the answer that we provide
have a natural mathematical interest. 
But this topic is 
also fascinating 
from the physical point of view, since
we find different macroscopic behaviors when 
the diffusion intensity in the inclusion, 
their size, and the 
characteristic scale of the overall periodicity
are changed.

Since in the paper we consider a rather large number of different
cases it is useful to list them in a sort of synoptic summary.
In the following $n$ will denote the space dimension
and we shall use the symbols $\approx$, $\ll$, and $\gg$
to distinguish among the different cases. The precise mathematical
meaning of those symbols will be provided in the sequel.
We shall mainly consider two different schemes to pass to the
degeneration $\delta\to0$ and to the 
homogenization $\varepsilon\to0$ limits.

\begin{figure}[t]
\begin{center}
\includegraphics[width=.25\textwidth]{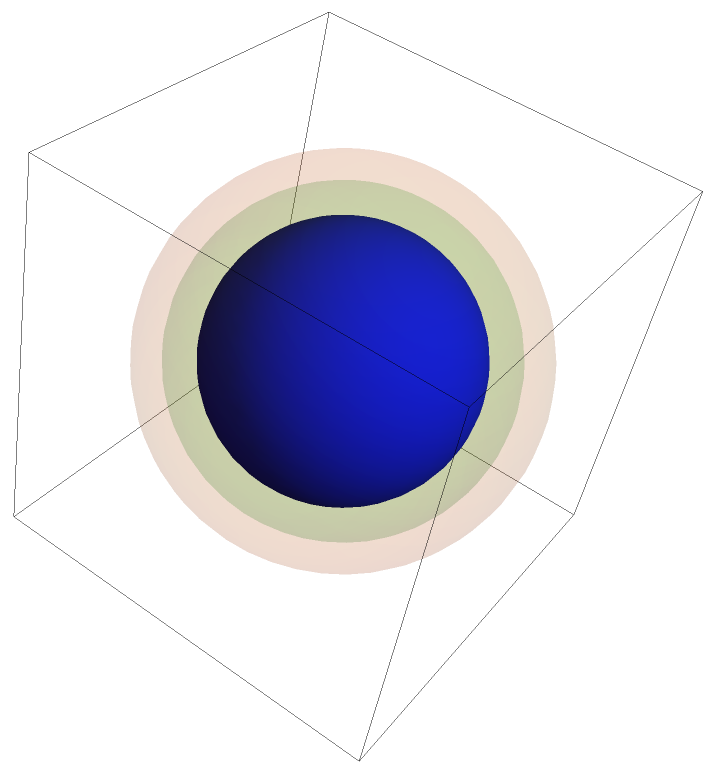}
\includegraphics[width=.25\textwidth]{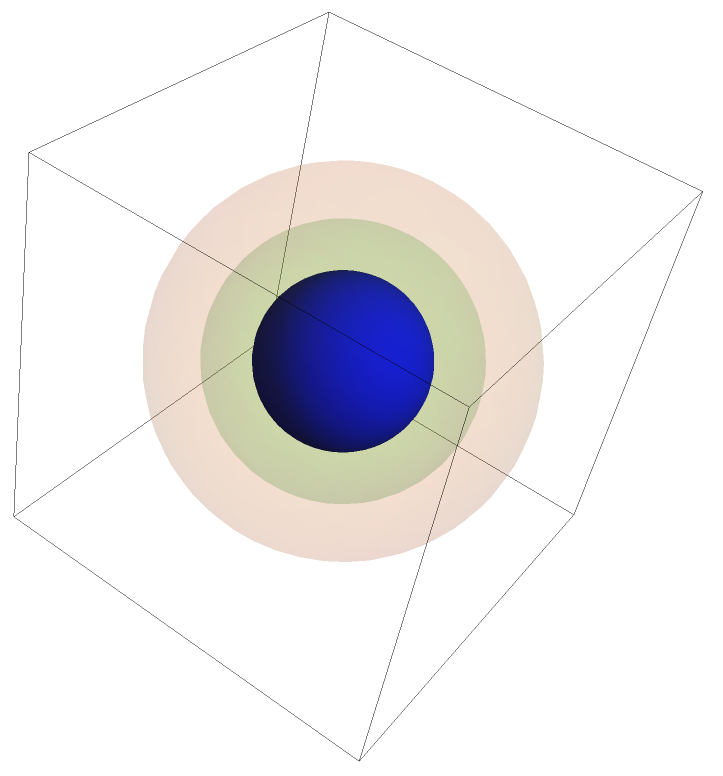}
\includegraphics[width=.25\textwidth]{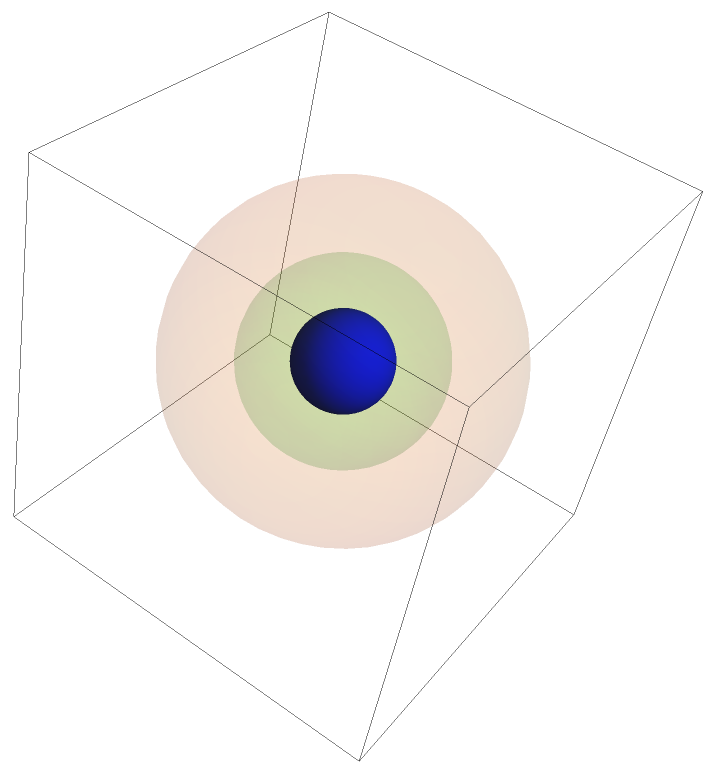}
\\
\includegraphics[width=.25\textwidth]{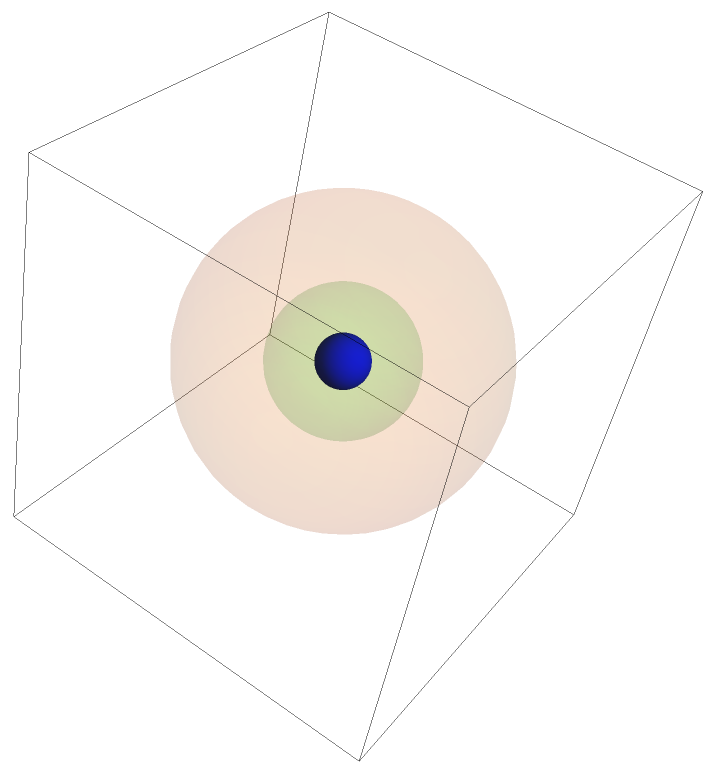}
\includegraphics[width=.25\textwidth]{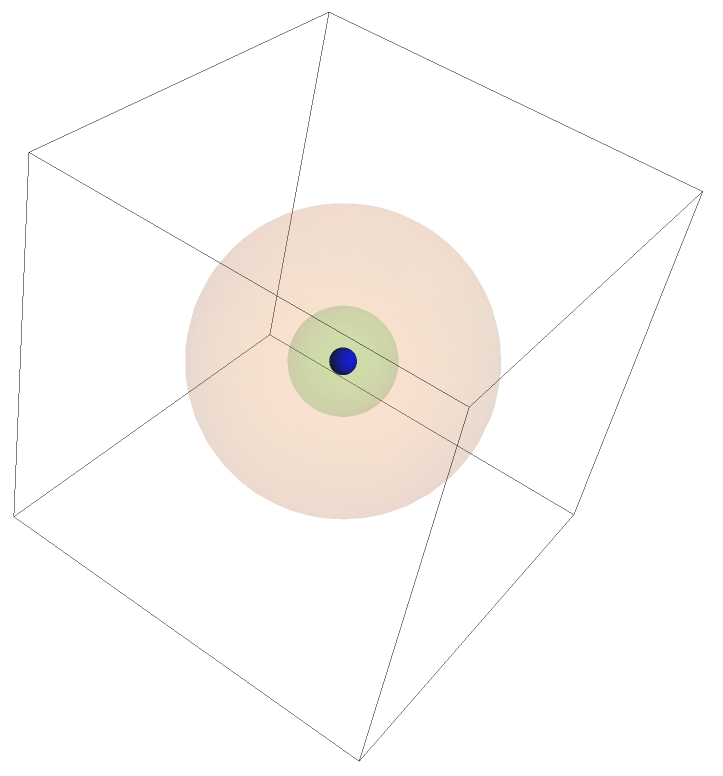}
\includegraphics[width=.25\textwidth]{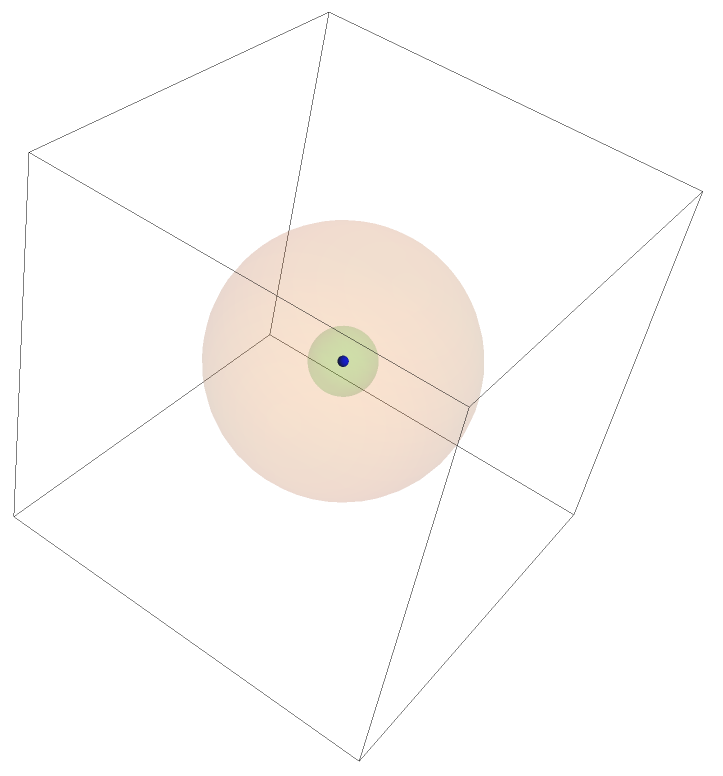}
\end{center}
\vskip .3 cm
\caption{Schematic representation of the geometry of cell and 
inclusions in the limiting supercritical, critical, and subcritical 
cases.
The elementary cell with side length $\eps$ rescaled
to unit is drawn in order to have comparable pictures.  
For $n=3$, 
the blue, the green, and the orange spheres 
represent the inclusion centered at the center of the cell 
respectively in the 
subcritical $\eta(\varepsilon)=(\eps^{2/(n-2)})^2$, 
critical $\eta(\varepsilon)=\eps^{2/(n-2)}$, 
and 
supercritical $\eta(\varepsilon)=(\eps^{2/(n-2)})^{1/4}$ 
cases.
In lexicographic order $\eps=0.9,0.8,0.7,0.6,0.5,0.4$.
}
\label{f:sc01}
\end{figure}

\textit{First asymptotic scheme.\/}
We first let $\delta\to0$ and then consider the homogenization limit
$\varepsilon\to0$.
To this end in Section~\ref{ss:limdelta} we find the
limit problem for $\delta\to0$ in Theorem~\ref{t:unkd_limit}
and call it the \textit{degenerate problem}.
Then, in Section~\ref{ss:homog_deg} we homogenize the
degenerate problem, but different choices for the behavior
of the relative inclusion size $\eta$
can be considered when $\varepsilon\to0$.
In Section~\ref{s:crit} we consider the so called
\textit{critical case}
$\eta\approx\varepsilon^{2/(n-2)}$, see \cite{Cioranescu:Murat:1982}.
The \textit{supercritical}
$\eta\gg\varepsilon^{2/(n-2)}$
and
the \textit{subcritical}
$\eta\ll\varepsilon^{2/(n-2)}$
cases are treated, respectively,
in Section~\ref{ss:superd} and \ref{ss:sottod}.

\textit{Second asymptotic scheme.\/}
We fix $\delta>0$ and consider the homogenization limit
$\varepsilon\to0$ in Sections~~\ref{ss:critical_ndeg}
and \ref{ss:superndeg}.
Such a limit depends on how
$\eta$ and $\varepsilon$ are related, so
the homogenization study 
is indeed divided into two parts:
in Section~\ref{ss:critical_ndeg}
we consider $\eta=\eta(\varepsilon)$ tending to
zero as $\varepsilon\to0$,
while
in Section~\ref{ss:superndeg}
we assume $\eta$ constant.
Then we pass to the limit $\delta\to0$ in
Section~\ref{s:deghom}.

We can summarize our results saying that, according to 
the dependence of the relative size $\eta$ on the cell size 
$\eps$, we find three possible behaviors for the upscaled 
equations: \textit{pure diffusion}, 
\textit{diffusion with mass deposition}, and \textit{absence of diffusion}.
More precisely, 
for what concerns the 
first asymptotic scheme, 
in Section~\ref{s:fp_nondeg},
the problems 
\eqref{eq:pde_2}--\eqref{eq:init_2} 
and
\eqref{eq:pde_1}--\eqref{eq:init_1} 
are found, respectively, 
outside and inside
the inclusions 
in the degeneration limit $\delta\to0$.
The former is a 
standard Fokker--Planck problem with homogeneous Dirichlet 
condition on the inclusions boundary.
The latter is an ordinary differential equation 
in time with a source term, which can be equivalently rewritten 
as equation \eqref{eq:add_F}.
It is to remark that the convergence to the solution of 
the limit problem as $\delta\to0$ inside the 
inclusions can be proven only on compact subdomains, 
since close to the inclusions boundary
a phenomenon of mass concentration takes place.
Indeed, in Theorem~\ref{t:add_limit} we show that 
the total mass in a vanishingly small strip adjacent from the inside 
to the inclusions boundary tends, in the degeneration limit, to 
the total mass flux arriving to the boundary from the 
exterior. 

When in Section~\ref{ss:homog_deg} we homogenize the degenerate 
equations derived in Section~\ref{s:fp_nondeg}, we find 
different upscaled systems depending on the way in which 
the relative size of the inclusions $\eta$ is scaled with 
respect to the cell size $\eps$. 
Referring to the nomenclature introduced above, 
in the subcritical regime inclusions have a poor effect and a standard
Fokker--Planck  
diffusion problem is found in Theorem~\ref{t:t2_supcrit}
with diffusion coefficient provided 
by a suitable cell average of the original coefficient. 
In the critical case, as in 
the pioneering paper
\cite{Cioranescu:Murat:1982}, 
inclusions are effective and yield a positive capacitary 
term in the diffusion equation of Theorem~\ref{t:t2}
accounting for mass deposition.
Finally, in the supercritical case, as shown in Theorem~\ref{t:superd},
inclusions
dominate and, provided the 
mass concentration phenomenon is correctly taken into account, 
the mass density converges to the solution of an ordinary 
differential equation in time as $\eps$ tends to zero. 

For what concerns the 
second asymptotic scheme, we have to consider two different cases. 
When the relative size $\eta$ tends to zero as $\eps\to0$, 
in Theorem~\ref{t:t22} we prove that the limit solution 
solves the standard Fokker--Planck diffusion 
problem \eqref{eq:m26ter} which does not depend 
on the degeneration parameter, so that no further analysis is needed. 
On the other hand, if the relative size is kept constant, say 
$\eta=1$, in Theorem~\ref{t:t23} we prove that, as $\eps\to0$, 
the limit solution solves the standard Fokker--Planck 
diffusion problem \eqref{eq:a46} with diffusion 
coefficient depending on the degeneration parameter $\delta$.
Moreover, as proven in Theorem~\ref{t:t10}, 
its solution, in the limit $\delta\to0$, tends to the solution of 
the ordinary differential equation \eqref{eq:add_F}.

We remark that the computation that we perform in
Section~\ref{ss:homog_deg}
is valid only in dimension $n\geq 3$, since we follow the ideas
in \cite{Cioranescu:Damlamian:Griso:Onofrei:2008} which are not
valid in smaller dimensions. On the contrary,
the results discussed 
in Sections~\ref{s:fp_nondeg} and \ref{ss:homog_ndeg} 
are in force for
any dimension $n\geq2$.

Finally, we note that, since in the two schemes the 
degeneration and the homogenization limits are taken in 
reversed order, it is natural to compare these
results each other and look for possible commutation properties.
We refer to Remark~\ref{r:r1} for a thorough discussion, 
but, here, 
we anticipate that 
the two strategies commute
when 
$\eta=1$,
whereas 
when $\eta\to0$ as $\eps\to0$ 
they commute in the
subcritical case, 
while 
in the 
critical 
and 
supercritical cases
they do not.

In view of the variety of these results, a natural question arises about the 
behavior of the model when the degeneration and the homogenization limits 
are taken simultaneously, namely, when the parameter $\delta$ 
is considered a vanishing function of $\eps$. 
Preliminary results suggest that this can be a promising study and, thus,
it will be the topic of future research.

The paper is organized as follows. 
In Section~\ref{s:problem} we introduce the model. 
In Section~\ref{s:fp_nondeg} we discuss the 
degeneration $\delta\to0$ limit.
Section~\ref{s:unfold} is devoted to a short review of 
the unfolding approach to homogenization. 
In Sections~\ref{ss:homog_deg}
and \ref{ss:homog_ndeg} we study, respectively, the 
homogenization limit of the degenerate and the non--degenerate problems.
In Section~\ref{s:od} we provide an explicit solution 
of the problem under investigation in the one--dimensional case 
showing that, if one considered diffusion coefficient depending 
on time, globally bounded solutions could not exist.
Finally, in Section~\ref{s:concl}
we summarize our conclusions.

\section{The problem}
\label{s:problem}
Let $\Oset\subset\R^n$ be a smooth bounded open set.
Let $\eps>0$ be a small parameter denoting the length scale of the periodic microstructure.
Let us consider the tiling of $\mathbb{R}^n$
given by the boxes $\varepsilon(\xi+\Y)$,
with $\xi\in\mathbb{Z}^n$ and $\Y=(-1/2,1/2)^n$.
We denote by $[r]$ the integer part of $r\in\mathbb{R}$ with respect to 
the reference cell $(-1/2,1/2)$
(i.e., $[r]=k\in \ZZ$ if and only 
if $r\in[k-1/2,k+1/2)$) and, similarly, we denote by
$\{r\}=r-[r]$, i.e., 
the fractional part of $r$ with respect to $(-1/2,1/2)$. Moreover,
for $x\in\mathbb{R}^n$, we define the vector with integer components
$[x]_\Y=([x_1],\dots,[x_n])$.
We refer to Fig.~\ref{f:def} for a schematic
representation of the geometric setup.

We set
\begin{equation}
\label{fold000}
\begin{aligned}
& \Xi_\varepsilon
=
\{\xi\in\mathbb{Z}^n:\,
  \varepsilon(\xi+\Y)\subset\Omega\},
\
\hat\Omega_\varepsilon
=
\text{interior}
\Big\{
      \bigcup_{\xi\in\Xi_\varepsilon}
           \varepsilon(\xi+\overline{\Y})
\Big\}
,
\\
&\hphantom{jhkfgjdhsaJ,KHGFD}
\Lambda_{\varepsilon}
=
\Omega\setminus\hat\Omega_\varepsilon
\;.
\end{aligned}
\end{equation}
We introduce also the scaled cell containing the point $x$ as
$$
\Y_\eps(x)=\eps\left(\left[\frac{x}{\eps}\right]_\Y+\Y\right)\,.
$$
In the sequel, we will assume that $\Oset$ contains an $\eps\Y$-periodic 
array of smooth small holes of size $\eta\eps$ ($1\ge \eta>0$ possibly 
depending on $\eps$).
More precisely, if the reference inclusion (also called hole, as 
in the previous literature)  $\hole\subset\subset \Y$ is a 
given connected regular open set, we denote by $\Y^*_\eta
=\Y\setminus\eta \overline \hole$ and define $\Om^*_{\eps,\eta}$ as
\begin{equation}\label{eq:m2}
\begin{aligned}
\Om^*_{\eps,\eta} & =\hbox{interior}\left\{\bigcup_{\xi\in\Xi_\varepsilon}
           \varepsilon(\xi+\overline\Y^*_\eta)\right\}
\\
&= \left\{x\in\hat\Om_\eps,\ \hbox{such that\ } \frac{x}{\eps}-\left[\frac{x}{\eps}\right]_\Y\in \Y^*_\eta\right\}.
\end{aligned}
\end{equation}
We also denote by $\Oint_\eps=\hat\Om_\eps\setminus\overline{\Om^*_{\eps,\eta}}
=\cup_{\xi\in\Xi_\varepsilon} \varepsilon(\xi+\eta\hole)$ and $\Oout_\eps=\Lambda_\eps\cup\Om^*_{\eps,\eta}=\Om\setminus\overline{\Oint_{\eps}}$, respectively,
and we assume that, for every $\eps>0$, they are smooth sets.
For the sake of simplicity, we also denote by $\Memb=\partial\Oout_\eps\setminus\partial\Om=\partial\Oint_\eps$,
so that $\Oset=\Oint_\eps\cup\Oout_\eps\cup\Memb$; that is $\Oint_{\eps}$ 
is the interior of the inclusions and $\Oout_{\eps}$ is the outer domain. 

\setlength{\unitlength}{1.3pt}
\begin{figure}[h]
\begin{picture}(400,100)(-25,20)
\thinlines
\multiput(0,0)(0,20){6}{\Dline(0,0)(100,0){3}}
\multiput(0,0)(20,0){6}{\Dline(0,0)(0,100){3}}
\thicklines
\put(-10,50){\vector(1,0){120}}
\put(50,-10){\vector(0,1){120}}
\thinlines
\put(67,44){${\scriptstyle k_1}$}
\put(70,50){\line(0,1){40}}
\put(42,86){${\scriptstyle k_2}$}
\put(50,90){\line(1,0){20}}
\put(72,87){\circle*{2}}
\put(74,87){${\scriptstyle x}$}
\multiput(5,14)(0,20){5}{\multiput(0,0)(20,0){5}{\begin{color}{gray}\circle*{6}\end{color}}}
\multiput(140,5)(0,10){10}{\Dline(0,0)(90,0){2}}
\multiput(140,5)(10,0){10}{\Dline(0,0)(0,90){2}}
\thicklines
\put(130,50){\vector(1,0){110}}
\put(185,-5){\vector(0,1){110}}
\thinlines
\put(185,50){\circle{84}}
\linethickness{0.7mm}
\put(150,35){\line(0,1){30}}
\put(150,65){\line(1,0){10}}
\put(160,65){\line(0,1){10}}
\put(160,75){\line(1,0){10}}
\put(170,75){\line(0,1){10}}
\put(170,85){\line(1,0){30}}
\put(200,85){\line(0,-1){10}}
\put(200,75){\line(1,0){10}}
\put(210,75){\line(0,-1){10}}
\put(210,65){\line(1,0){10}}
\put(220,65){\line(0,-1){30}}
\put(220,35){\line(-1,0){10}}
\put(210,35){\line(0,-1){10}}
\put(210,25){\line(-1,0){10}}
\put(200,25){\line(0,-1){10}}
\put(200,15){\line(-1,0){30}}
\put(170,15){\line(0,1){10}}
\put(170,25){\line(-1,0){10}}
\put(160,25){\line(0,1){10}}
\put(160,35){\line(-1,0){10}}
\multiput(142.5,12)(0,10){9}{\multiput(0,0)(10,0){9}{\begin{color}{gray}\circle*{3}\end{color}}}
\end{picture}
\vskip 2. cm
\caption{Schematic description of the
geometry of the model in dimension $n=2$ and
some related notions.
The gray dots represent the inclusions.
Left: tiling and definition of
integer part $[x]_\mathcal{Y}=(k_1,k_2)$.
On the right the lattice is rescaled with $\varepsilon=1/2$ and
$\eta=1$:
the big circle represents the open
set $\Omega$ and the region with solid boundary is the set
$\bigcup_{\xi\in\Xi_\varepsilon}\varepsilon(\xi+\overline{\mathcal{Y}})$.
}
\label{f:def}
\end{figure}
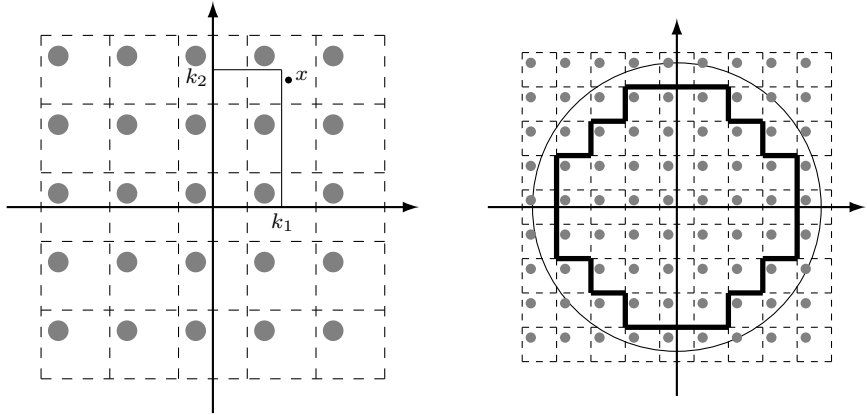

\noindent
Finally, for any set $\mathcal{G}\subset \R^n$, we denote $\mathcal{G}_T=\mathcal{G}\times(0,T)$.

\medskip
Let us consider the problem

\begin{alignat}{2}
  \label{eq:pde_d}
  \pder{\unkde}{t}
  -
  \Lapl(\fpcde \unkde)
  &=
  f
  \,,
  &\qquad&
  \text{in $\OsetT$;}
  \\
  \label{eq:bdr_d}
  \pder{(\fpcde\unkde)}{\normal}
  &=
  0
  \,,
  &\qquad&
  \text{on $\bdr{\Oset}\times(0,T)$;}
  \\
  \label{eq:init_d}
  \unkde(x,0)
  &=
  \initd(x)
  \,,
  &\qquad&
  \text{in $\Oset$.}
\end{alignat}
Here $\normal$ denotes the outer normal to $\bdr{\Oset}$, $f\in L^{2}(\OsetT)$, $\initd\in L^2(\Om)$. For $0<\delta\le 1$ and $\eps>0$
\begin{equation}
  \label{eq:m3}
  \fpcde(x)
  =
  \left\{
    \begin{alignedat}{2}
      &\delta\fpc_\eps(x)
      \,,
      &\qquad&
      x\in \Oint_\eps
      \,;
      \\
      &\fpc_\eps(x)
      \,,
      &\qquad&
      x\in \Oout_\eps
      \,,
    \end{alignedat}
    \right.
\end{equation}
with $\fpc_\eps(x)=\fpc(x,x/\eps)$, where $\fpc$ is a Carath\'{e}odory
function belonging to $L^\infty(\Om\times\Y)$,
which is $\Y$-periodic with respect to the second variable
and satisfies
\begin{equation}
  \label{eq:m7}
  \fpc(x,y)
  \ge
  C
  >
  0
  \,,
  \qquad
  (x,y)\in\Oset\times\Y
  \,,
\end{equation}
for a suitable constant $C>0$.

Notice that, as explained in the Introduction,
the appearance of two small parameters $\eps$ and $\delta$ in
problem \eqref{eq:pde_d}--\eqref{eq:init_d} leads us to consider, and
compare, the behavior of the problem when we let first $\delta\to 0$
and then $\eps\to 0$ or vice versa.
These situations will be analyzed in the
following sections.

\begin{definition}
  \label{d:weaksol_d}
  A weak solution to problem \eqref{eq:pde_d}--\eqref{eq:init_d} 
is a function $\unkde\in H^{1}(t_{0},T;L^{2}(\Oset))\cap \mathcal{C}([0,T];L^{2}(\Oset))$ for all $0<t_{0}<T$, such that $\fpcde\unkde\in L^{2}(0,T;H^{1}(\Oset))$ and
  \begin{equation}
    \label{eq:weaksol_d_a}
    \int_{\OsetT}
    \Big[
    \pder{\unkde}{t}
    \varphi
    +
    \grad(\fpcde\unkde)
    \grad \varphi
    \Big]
    \di x
    \di t
    =
    \int_{\OsetT}
    f\varphi
    \di x
    \di t
    \,,
  \end{equation}
  for all $\varphi\in L^{2}(0,T;H^{1}(\Oset))$, with support bounded away from $t=0$. In addition, we require $\unkde=\initd$ at time $t=0$ in the $L^{2}$ sense.
\end{definition}

We may give to \eqref{eq:weaksol_d_a} the equivalent alternative formulation
\begin{multline}
  \label{eq:weak_form22}
  -
  \int_{\OsetT}
  \unkde \phi_t
  \di x\di t
  +
  \int_{\OsetT}
  \grad (\fpcde\unkde)
  \grad\phi
  \di x\di t
  \\
  =
  \int_{\OsetT}
  f
  \phi
  \di x\di t
  +
  \int_{\Oset}
  \initd(x)
  \phi(x,0)
  \di x
  \,,
\end{multline}
for all test functions $\phi\in H^1(\OsetT)$, with $\phi(x,T)=0$ in $\Oset$, in the sense of traces.
For example we may choose for any $\varphi\in \mathcal{C}^{1}(\overline{\OsetT})$
\begin{equation}
  \label{eq:weaktest}
  \phi(x,t)
  =
  \int_{t}^{T}
  \varphi(x,\tau)
  \di\tau
  \,.
\end{equation}

\subsection{Energy and $L^{1}$ estimates}
\label{s:energy}
Here we collect some results which are used
throughout the paper.

An immediate consequence of \eqref{eq:weaksol_d_a} is the balance
\begin{equation}
  \label{eq:mass_balance}
  \int_{\Oset}
  \unkde(x,t)
  \di x
  =
  \int_{\Oset}
  \initd(x)
  \di x
  +
  \int_{0}^{t}
  \int_{\Oset}
  f(x,\tau)
  \di x
  \di \tau
  \,,
  \quad
  0<t<T
  \,.
\end{equation}

We infer by \eqref{eq:weaksol_d_a} and routine arguments the energy estimate
\begin{multline}
  \label{eq:energy_d_n}
  \sup_{0<t<T}
  \int_{\Oset}
  \fpcde
  \unkde(t)^{2}
  \di x
  +
  \int_{\OsetT}
  \abs{\grad(\fpcde\unkde)}^{2}
  \di x
  \di t
  \\
  \le
  \gamma(T)
  \Big(
  \int_{\Oset}
  \fpcde
  \initd^{2}
  \di x
  +
  \int_{\OsetT}
  \fpcde
  f^{2}
  \di x
  \di t
  \Big)
  \,,
\end{multline}
where, here and in the following, $\gamma$ is a generic positive 
real number not depending on $\varepsilon$, $\delta$, and $\eta$.

We also have, from 
\eqref{eq:weaksol_d_a} and \eqref{eq:energy_d_n},
by choosing as test function the product of 
$b_{\varepsilon,\delta}\partial u_{\varepsilon,\delta}/\partial t$ times 
a continuous function of time constant for $t\ge t_0$ and zero in $0$,
that
for all $0<t_{0}<T$
\begin{multline}
  \label{eq:energyt0_d_n}
  \int_{t_{0}}^{T}
  \int_{\Oset}
  \fpcde
  \Big(
  \pder{\unkde}{t}
  \Big)^{2}
  \di x
  \di t
  +
  \sup_{t_{0}<t<T}
  \int_{\Oset}
  \abs{\grad(\fpcde\unkde(t))}^{2}
  \di x
  \\
  \le
  \gamma(T)
  t_{0}^{-1}
  \Big(
  \int_{\Oset}
  \fpcde
  \initd^{2}
  \di x
  +
  \int_{\OsetT}
  \fpcde
  f^{2}
  \di x
  \di t
  \Big)
  \,.
\end{multline}
Such estimates may be used to prove existence of a solution in the sense of Definition~\ref{d:weaksol_d}, via approximation with smoothed problems, and also its uniqueness, since the problem is linear. Alternatively, uniqueness follows from our next result. 

\begin{lemma}[Conservation of mass]
  \label{l:mass_con}
  We have for all $t\in(0,T)$
  \begin{equation}
    \label{eq:mass_con_n}
    \int_{\Oset}
    \abs{\unkde(x,t)}
    \di x
    \le
    \int_{\Oset}
    \abs{\initd(x)}
    \di x
    +
    \int_{0}^{t}
    \int_{\Oset}
    \abs{f}
    \di x
    \di \tau
    \,.
  \end{equation}
  If $\initd\ge 0$, $f\ge 0$ then $\unkde\ge 0$ and, if in addition $f=0$,
  \begin{equation}
    \label{eq:mass_con_nn}
    \norma{\unkde(t)}{L^{1}(\Oset)}
    =
    \norma{\initd}{L^{1}(\Oset)}
    \,,
    \qquad
    0<t<T
    \,.
  \end{equation}
\end{lemma}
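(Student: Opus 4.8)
The plan is to prove the $L^1$ bound \eqref{eq:mass_con_n} by testing the weak formulation against a smooth approximation of $\sign(\unkde)$, then derive nonnegativity and mass conservation as corollaries. The natural test function in \eqref{eq:weaksol_d_a} would be $\sign(\fpcde\unkde)=\sign(\unkde)$ (recall $\fpcde>0$), but this is not admissible since it lacks the required $H^1$ regularity in space, so first I would work with a $\mathcal{C}^1$, nondecreasing, odd regularization $S_\lambda$ of the sign function (for instance $S_\lambda(s)=\tanh(s/\lambda)$ or a piecewise-linear truncation), apply the regularity theory giving $\fpcde\unkde\in L^2(0,T;H^1(\Oset))$ and $\partial_t\unkde\in L^2(\Oset\times(t_0,T))$ from \eqref{eq:energyt0_d_n}, and use $\varphi=S_\lambda(\fpcde\unkde)\chi_{(t_0,t)}$ as a legitimate choice (smoothed also in time near $t_0$ as in the derivation of \eqref{eq:energyt0_d_n}).

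The key computation is then: the parabolic term $\int \partial_t\unkde\,S_\lambda(\fpcde\unkde)$ should be rewritten, using $\sign(\unkde)=\sign(\fpcde\unkde)$, as approximately $\frac{d}{dt}\int|\unkde|$; more precisely, since $S_\lambda(\fpcde\unkde)\to\sign(\unkde)$ boundedly a.e. and $\unkde(t)$ is continuous into $L^2$, one passes to the limit $\lambda\to0$ to obtain $\int_{\Oset}|\unkde(x,t)|\,dx-\int_{\Oset}|\unkde(x,t_0)|\,dx$ on the left (a more careful route is to keep $\lambda$ fixed, integrate the chain rule for the primitive $J_\lambda$ of $S_\lambda$, and only then send $\lambda\to0$). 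The gradient term $\int\grad(\fpcde\unkde)\grad S_\lambda(\fpcde\unkde)=\int S_\lambda'(\fpcde\unkde)|\grad(\fpcde\unkde)|^2\ge0$ is nonnegative and is simply discarded. The right-hand side $\int f\,S_\lambda(\fpcde\unkde)$ is bounded in absolute value by $\int|f|$ uniformly in $\lambda$. Letting $t_0\to0$ and using $\unkde(\cdot,0)=\initd$ in $L^2$ together with continuity in time yields \eqref{eq:mass_con_n}.

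For the sign-preservation statement, when $\initd\ge0$ and $f\ge0$ I would repeat the argument with $\varphi$ replaced by a smooth approximation of the negative-part truncation, e.g. testing against $G_\lambda(\fpcde\unkde)$ where $G_\lambda$ is a $\mathcal{C}^1$ nonincreasing function with $G_\lambda(s)=0$ for $s\ge0$ and $G_\lambda(s)\to\sign(s^-)$; the same manipulation gives $\frac{d}{dt}\int(\unkde)^-\,dx\le\int f\,G_\lambda(\fpcde\unkde)\le0$ in the limit, and since $(\initd)^-=0$ we conclude $(\unkde)^-\equiv0$, i.e.\ $\unkde\ge0$. Finally, if also $f=0$, then $\unkde\ge0$ makes $|\unkde|=\unkde$, so \eqref{eq:mass_con_nn} is just the mass balance \eqref{eq:mass_balance} with $f=0$; alternatively it follows by applying \eqref{eq:mass_con_n} to $\unkde$ and to $-\unkde$ (shifted) to get both inequalities.

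The main obstacle is the rigorous justification of the chain-rule / integration-by-parts step for the nonsmooth nonlinearity — i.e.\ legitimizing $\int_{\Oset\times(t_0,t)}\partial_t\unkde\,\sign(\unkde)=\int_{\Oset}|\unkde(t)|-\int_{\Oset}|\unkde(t_0)|$. This requires that $\unkde$ has enough time regularity (supplied on $(t_0,T)$ by \eqref{eq:energyt0_d_n}) and a careful interchange of the limits $\lambda\to0$ and $t_0\to0$; the cleanest way is to fix $t_0>0$, carry the whole argument with the smooth $S_\lambda$ to get an identity for the primitive, pass $\lambda\to0$ by dominated convergence (the integrand $|S_\lambda(\fpcde\unkde)|\le1$ and $\grad$-term has a sign), and only at the end let $t_0\to0$ using $\unkde\in\mathcal{C}([0,T];L^2(\Oset))$ with $\unkde(0)=\initd$. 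Everything else — the energy estimates, density of test functions, positivity of the discarded gradient term — is routine given the results already established in the excerpt.
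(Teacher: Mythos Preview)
Your proposal is correct and follows essentially the same approach as the paper: test the weak formulation with a smooth monotone approximation of $\sign(\fpcde\unkde)$, drop the nonnegative gradient term, pass to the limit in the regularization, and finally let $t_0\to 0$; the positivity and mass-conservation claims are then handled exactly as you say (the paper uses $\sign_\sigma(s)\chi_{(-\infty,0)}(s)$ for positivity and invokes \eqref{eq:mass_balance} for \eqref{eq:mass_con_nn}). One small slip: in your positivity step, since $G_\lambda\ge 0$ and $f\ge 0$ you have $\int f\,G_\lambda\ge 0$, not $\le 0$; the inequality that actually comes out is $\frac{d}{dt}\int(\unkde)^-\le 0$ after moving the (nonpositive) gradient term to the right side, which still yields the conclusion.
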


\begin{proof}
  Let $\sign_{\sigma}$ be a smoothed increasing version of the sign
  function, with $\sign_{\sigma}(0)=0$, 
  converging everywhere to $\sign$ as $\sigma\to  0+$, and select as a
  testing function in \eqref{eq:weaksol_d_a}
  $\sign_{\sigma}(\fpcde\unkde)\chi_{(t_{0},t)}(\tau)$. We get
  \begin{multline*}
    \int_{t_{0}}^{t}
    \int_{\Oset}
    \Big[
    \pder{\unkde}{\tau}
    \sign_{\sigma}(\fpcde\unkde)
    +
    \sign_{\sigma}'(\fpcde\unkde)
    \abs{\grad(\fpcde\unkde)}^{2}
    \Big]
    \di x
    \di \tau
    \\
    \le
    \int_{t_{0}}^{t}
    \int_{\Oset}
    \abs{f}
    \di x
    \di \tau
    \,.
  \end{multline*}
  Next we drop the non-negative term on the left hand side, then we let $\sigma\to 0+$ and note that $\sign(\fpcde\unkde)=\sign(\unkde)$; thus we obtain
  \begin{multline*}
    \int_{t_{0}}^{t}
    \int_{\Oset}
    \abs{f}
    \di x
    \di \tau
    \ge
    \int_{t_{0}}^{t}
    \int_{\Oset}
    \pder{\unkde}{\tau}
    \sign(\unkde)
    \di x
    \di \tau
    \\
    =
    \int_{t_{0}}^{t}
    \int_{\Oset}
    \pder{\abs{\unkde}}{\tau}
    \di x
    \di \tau
    =
    \int_{\Oset}
    \abs{\unkde(x,t)}
    \di x
    -
    \int_{\Oset}
    \abs{\unkde(x,t_{0})}
    \di x
    \,.
  \end{multline*}
  On letting $t_{0}\to 0$ we arrive at \eqref{eq:mass_con_n}.

  The positivity result follows similarly, by replacing $\sign_{\sigma}(s)$ with  $\sign_{\sigma}(s)\chi_{(-\infty,0)}(s)$; then \eqref{eq:mass_con_nn} follows from \eqref{eq:mass_balance}.
\end{proof}

\subsection{An auxiliary formulation.}
\label{s:vfor}
If we set $\vde=\fpcde\unkde$ we obtain for this new unknown the problem
\begin{alignat}{2}
  \label{eq:pde_dv}
  \frac{1}{\fpcde}
  \pder{\vde}{t}
  -
  \Lapl \vde
  &=
  f
  \,,
  &\qquad&
  \text{in $\OsetT$;}
  \\
  \label{eq:bdr_dv}
  \pder{\vde}{\normal}
  &=
  0
  \,,
  &\qquad&
  \text{on $\bdr{\Oset}\times(0,T)$;}
  \\
  \label{eq:init_dv}
  \vde(x,0)
  &=
  \fpcde(x)\initd(x)
  \,,
  &\qquad&
  \text{in $\Oset$.}
\end{alignat}
The weak formulations follow obviously from the ones in Definition~\ref{d:weaksol_d} and in \eqref{eq:weak_form22}; let us write explicitly the latter form as
\begin{multline}
  \label{eq:weak_form12}
  -
  \int_{\OsetT}
  \frac{\vde}{\fpcde}
  \phi_t
  \di x\di t
  +
  \int_{\OsetT}
  \grad \vde
  \grad\phi
  \di x\di t
  \\
  =
  \int_{\OsetT}
  f\phi
  \di x\di t
  +
  \int_{\Oset}
  \initd(x)
  \phi(x,0)
  \di x\,,
\end{multline}
for all test functions $\phi\in H^1(\OsetT)$, with $\phi(x,T)=0$ in $\Oset$, in the sense of traces.

From the estimates \eqref{eq:energy_d_n} and
\eqref{eq:energyt0_d_n} we obtain for all $0<t_{0}<T$
\begin{equation}
  \label{eq:energy_v}
  \sup_{0<t<T}
  \int_{\Oset}
  \frac{\vde(t)^{2}}{\fpcde}
  \di x
  +
  \int_{\OsetT}
  \abs{\grad\vde}^{2}
  \di x
  \di t
  +
  t_{0}
  \int_{t_{0}}^{T}
  \int_{\Oset}
  \frac{1}{\fpcde}
  \Big(
  \pder{\vde}{t}
  \Big)^{2}
  \di x
  \di t
  \le
  \gamma
  \,,
\end{equation}
with $\gamma$ as above and independent of $t_{0}$.

\begin{remark}
\label{r:vcomp_ext}
As a consequence of \eqref{eq:energy_v}, the sequence $\vde$ 
is compact in $L^{2}(\OsetT)$ and we have, up to subsequences
for $\varepsilon\to0$ and $\delta\to0$,
  \begin{alignat}{2}
    \label{eq:vcomp_ext_v}
    \vde&\to v_{0}
    \,,
    &\qquad&
    \text{strongly in $L^{2}(\OsetT)$;}
    \\
    \label{eq:vcomp_ext_grad}
    \grad\vde &\wto \grad v_{0}
    \,,
    &\qquad&
    \text{weakly in $L^{2}(\OsetT)$;}
    \\
    \pder{\vde}{t}&\wto\pder{v_{0}}{t}
    \,,
    &\qquad&
    \text{weakly in $L^{2}(\Oset\times(t_{0},T))$, for all $t_{0}>0$.}
  \end{alignat}
\end{remark}

\begin{remark}[$L^{\infty}$ bounds for $\fpcde$ independent of time]
  \label{r:supbound}
  It is possible to prove (see, \cite{Andreucci:Bellaveglia:Cirillo:2014}) 
  that, if for example $f=0$ and $\initd\ge 0$,
  \begin{equation}
    \label{eq:supbound_a}
    0
    \le
    \unkde(x,t)
    \le
    \frac{\sup_{\Oset}\fpcde}{\inf_{\Oset}\fpcde}
    \,
    \sup_{\Oset}\initd
    \,,
    \qquad
    (x,t)\in\OsetT
    \,.
  \end{equation}
This result relies on the independence of $\fpcde$ 
from time: see Section~\ref{s:od} for further comments 
and a counterexample motivating our choice $\fpcde=\fpcde(x)$ independent
on time in this paper.

However, we note that the definitions of weak solutions 
\eqref{eq:weaksol_d_a}, \eqref{eq:weak_form22}, and \eqref{eq:weak_form12}
would be still valid even if $\fpcde$ depended on time.
\end{remark}

\section{The degeneration limit of the Fokker--Planck problem}
\label{s:fp_nondeg}

In this section, we will assume $\eps$ fixed (i.e., $\eps=1$) 
and study the behavior of the solution with respect to $\delta \to 0$.
For this reason, we will omit the subscript index $\eps$ 
and replace $\Oint_\eps$, $\Oout_\eps$, and $\Gamma_\varepsilon$
with $\Oint$, $\Oout$, and $\Gamma$, respectively.
Moreover, we will use the superscripts ``in'' and ``out''
to denote restrictions to $\Oint$ and $\Oout$.

Since, 
the spatial periodicity of cavities is not important in this section, 
we can just assume that the smooth bounded open sets 
$\Oint$ and $\Oout$ satisfy 
the following assumptions:
$\Oint\subset\overline{\Oint}\subset\Oset$
and 
$\Oout=\Oset\setminus\overline{\Oint}$.
We also let $\unkd=\unkde$, $\funkd=\vde$, and $\fpcd=\fpcde$.

We stress that 
we will rely on some non-standard energy estimates where the tracking 
of the behavior in $\delta$ is rather delicate (see especially 
Lemma~\ref{l:add_expand}). 

First of all we note that 
standard arguments and the assumed regularity of $\fpcd\unkd$ 
imply that a weak solution to \eqref{eq:pde_d}--\eqref{eq:init_d},
which is smooth enough in $\Oint$ and in $\Oout$, satisfies
\begin{alignat}{2}
  \label{eq:pde_d1}
  \pder{\unkd}{t}
  -
  \delta
  \Lapl(\fpc\unkd)
  &=
  f
  \,,
  &\qquad&
  \text{in $\OintT$;}
  \\
  \label{eq:pde_d2}
  \pder{\unkd}{t}
  -
  \Lapl(\fpc\unkd)
  &=
  f
  \,,
  &\qquad&
  \text{in $\OoutT$;}
  \\
  \label{eq:bdr_d2}
  \pder{(\fpc\unkd)}{\normal}
  &=
  0
  \,,
  &\qquad&
  \text{on $\bdr{\Oset}\times(0,T)$;}
  \\
  \label{eq:jump_d}
  [\fpcd \unkd]
  &=
  0
  \,,
  &\qquad&
  \text{on $\interT$;}
  \\
  \label{eq:jumpflux_d}
  [\grad(\fpcd\unkd)\scpr\normint]
  &=
  0
  \,,
  &\qquad&
  \text{on $\interT$;}
  \\
  \label{eq:init_d12}
  \unkd(x,0)
  &=
  \initd(x)
  \,,
  &\qquad&
  \text{in $\Oset$.}
\end{alignat}
Recall that
$\inter=\bdr{\Oint}\cap\bdr{\Oout}$ and $\normint$ is the normal
to $\inter$ pointing into $\Oout$. Let us remark that
\eqref{eq:jump_d} follows from the fact that $\fpcd\unkd$ is a Sobolev
function and \eqref{eq:jumpflux_d} is a standard consequence of the
differential equation \eqref{eq:pde_d} understood in a distributional sense.
Note, also, that \eqref{eq:jump_d} implies that $\unkd$ is not continuous 
across the interface $\Gamma$.

\subsection{The limit degenerate problem}

The point of the following estimate is that it is independent of $\delta$ (excepting the factor in the second integral of \eqref{eq:energy_dint_n}).

\label{ss:limdelta}
\begin{lemma}
  \label{l:energy_dint}
  For all $\varphi\in \mathcal{C}^{1}_{0}(\Oint)$, $0\le \varphi\le 1$, we have
  \begin{multline}
    \label{eq:energy_dint_n}
    \sup_{0<t<T}
    \int_{\Oint}
    \fpc
    \unkd(t)^{2}
    \varphi^{2}
    \di x
    +
    \delta
    \int_{0}^{t}
    \int_{\Oint}
    \abs{\grad(\fpc\unkd)}^{2}
    \varphi^{2}
    \di x
    \di \tau
    \\
    \le
    \gamma
    (1+\norma{\grad\varphi}{\infty}^{2})
    \Big(
    \int_{\Oset}
    \initd^{2}
    \di x
    +
    \int_{\OsetT}
    f^{2}
    \di x
    \di t
    \Big)
    \,.
  \end{multline}
  Here $\gamma$ depends on $T$.
\end{lemma}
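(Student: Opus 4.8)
The plan is to test the weak formulation \eqref{eq:weaksol_d_a} with $\varphi^2 \fpc\unkd\, \chi_{(0,t)}(\tau)$, where $\varphi\in\mathcal{C}^1_0(\Oint)$ is the given cutoff — this is an admissible test function because it vanishes near $\bdr\Oset$ and, being supported in $\Oint$, its presence kills all contributions from $\Oout$. First I would carry out the time-derivative term: since $\varphi$ is time-independent, $\int_0^t\int_{\Oint} \partial_\tau\unkd\,(\fpc\unkd)\varphi^2 = \tfrac12\int_0^t\int_{\Oint}\fpc\,\partial_\tau(\unkd^2)\varphi^2 = \tfrac12\int_{\Oint}\fpc\,\unkd(t)^2\varphi^2 - \tfrac12\int_{\Oint}\fpc\,\initd^2\varphi^2$, using $\fpcd=\delta\fpc$ on $\Oint$ and cancelling the $\delta$. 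The gradient term $\int_0^t\int_{\Oint}\grad(\fpcd\unkd)\grad(\varphi^2\fpc\unkd)$ I would expand by Leibniz into $\int_0^t\int_{\Oint}\varphi^2\abs{\grad(\fpcd\unkd)}^2\cdot\tfrac1\delta$ (again using $\fpcd=\delta\fpc$ to convert $\grad(\fpc\unkd)$ into $\tfrac1\delta\grad(\fpcd\unkd)$) plus a cross term $2\int_0^t\int_{\Oint}\varphi\,\fpc\unkd\,\grad(\fpcd\unkd)\grad\varphi\cdot\tfrac1\delta$. The right-hand side is $\int_0^t\int_{\Oint} f\varphi^2\fpc\unkd$.

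Next I would rearrange so that the good terms $\sup_t\int_{\Oint}\fpc\unkd(t)^2\varphi^2$ and $\delta\int_0^t\int_{\Oint}\varphi^2\abs{\grad(\fpc\unkd)}^2$ (note $\delta\abs{\grad(\fpc\unkd)}^2 = \tfrac1\delta\abs{\grad(\fpcd\unkd)}^2$) sit on the left, and absorb the cross term and the source term via Cauchy–Schwarz/Young. The cross term is bounded by $\tfrac14\int_0^t\int_{\Oint}\varphi^2\tfrac1\delta\abs{\grad(\fpcd\unkd)}^2 + \gamma\norma{\grad\varphi}{\infty}^2\int_0^t\int_{\Oint}\fpc\unkd^2$ (using $\fpc\in L^\infty$ and boundedness of $1/\fpc$ from \eqref{eq:m7}); the first piece is absorbed into the left-hand gradient term, and the second piece involves $\int_0^t\int_{\Oint}\fpc\unkd^2$ — which, crucially, we bound \emph{without} the cutoff, i.e.\ by $\int_0^T\int_{\Oint}\fpc\unkd^2$, and control this by the energy estimate \eqref{eq:energy_d_n}: since $\fpcd\le\fpc$ pointwise (as $\delta\le1$), $\int_{\OsetT}\fpc\unkde^2 \le \delta^{-1}\int_{\OsetT}\fpcde\unkde^2$ on $\Oint$ — wait, this reintroduces $\delta$, so instead I would use \eqref{eq:energy_d_n} in the form $\int_{\OintT}\fpcd\unkd^2 = \delta\int_{\OintT}\fpc\unkd^2 \le \gamma(\int\initd^2+\int f^2)$, which gives $\int_{\OintT}\fpc\unkd^2\le\gamma\delta^{-1}(\cdots)$; this is not $\delta$-uniform, so the honest route is to keep the cross term absorbed differently.

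The correct handling of the cross term, and the point I expect to be the main obstacle, is precisely this: one must control $\int_0^t\int_{\Oint}\fpc\unkd^2\varphi^2$ (with the cutoff present) by the already-established left-hand side via a Gronwall argument in $t$, rather than by the non-uniform energy bound. Concretely, after absorbing, the inequality reads $\sup_{(0,t)}\int_{\Oint}\fpc\unkd^2\varphi^2 + \delta\int_0^t\int_{\Oint}\abs{\grad(\fpc\unkd)}^2\varphi^2 \le \gamma(1+\norma{\grad\varphi}{\infty}^2)\big(\int_{\Oint}\fpc\initd^2\varphi^2 + \int_{\OsetT}f^2 + \int_0^t\int_{\Oint}\fpc\unkd^2\varphi^2\big)$, and then Gronwall's lemma in the variable $t$ (applied to $g(t)=\int_{\Oint}\fpc\unkd(t)^2\varphi^2$) upgrades this to the clean bound \eqref{eq:energy_dint_n}, with the extra factor $e^{\gamma T}$ absorbed into $\gamma(T)$; the dependence of $\gamma$ on $T$ is thereby explained. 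Finally I would note $\int_{\Oint}\fpc\initd^2\varphi^2\le\gamma\int_{\Oset}\initd^2$ and $\int_{\Oset T}f^2 = \int_{\Oset T}f^2$ to reach the stated right-hand side, and remark that the $L^2$-regularity of $\partial_t\unkd$ away from $t=0$ (cf.\ \eqref{eq:energyt0_d_n}) justifies the integration by parts in time for a.e.\ $t$, with the $t\to0$ behaviour recovered from the initial condition in the $L^2$ sense and a limiting argument $t_0\to0$ as in the proof of Lemma~\ref{l:mass_con}.
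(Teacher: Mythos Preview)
Your approach---test with $\varphi^{2}\fpc\unkd$, expand, absorb via Young, close with Gronwall---is exactly the paper's. There is, however, a bookkeeping slip in the Leibniz expansion that derails the rest. The cross term is
\begin{equation*}
2\int_{0}^{t}\int_{\Oint}\varphi\,\fpc\unkd\,\grad(\fpcd\unkd)\cdot\grad\varphi\di x\di\tau
\qquad\text{(no factor $1/\delta$):}
\end{equation*}
when you write $\grad(\varphi^{2}\fpc\unkd)=\delta^{-1}\grad(\varphi^{2}\fpcd\unkd)$ on $\Oint$ and expand, the piece $2\varphi\grad\varphi\,\fpcd\unkd$ carries a factor $\fpcd=\delta\fpc$, which cancels the $\delta^{-1}$. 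Equivalently, in terms of $\grad(\fpc\unkd)$ the cross term is $2\delta\int\varphi\grad\varphi\,\fpc\unkd\,\grad(\fpc\unkd)$. This surviving $\delta$ is the whole point: after Young, the residual $\unkd^{2}$ contribution from the cross term is
\begin{equation*}
\gamma\,\delta\,\norma{\grad\varphi}{\infty}^{2}\int_{0}^{t}\int_{\Oint}\fpc\,\unkd^{2}\di x\di\tau
=\gamma\,\norma{\grad\varphi}{\infty}^{2}\int_{0}^{t}\int_{\Oint}\fpcd\,\unkd^{2}\di x\di\tau,
\end{equation*}
and this is controlled uniformly in $\delta$ by the global energy estimate \eqref{eq:energy_d_n}---precisely the route you tried and then abandoned as ``not $\delta$-uniform.''

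Your fallback of feeding the cross-term residual into Gronwall does not close: after Young that residual carries $\abs{\grad\varphi}^{2}$ but \emph{not} $\varphi^{2}$, whereas your Gronwall quantity is $\int_{\Oint}\fpc\unkd^{2}\varphi^{2}$. In the displayed final inequality you have silently reinstated a $\varphi^{2}$ that is not there. The paper reserves Gronwall only for the source-term remainder $\int_{0}^{t}\int_{\Oint}\fpc\,\unkd^{2}\varphi^{2}$ (arising from $\int f\,\varphi^{2}\fpc\unkd$), which does genuinely carry the cutoff; the cross-term residual is dispatched directly via \eqref{eq:energy_d_n} as above.
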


\begin{proof}
The proof is based on standard arguments that we report for 
the reader's convenience. Indeed, 
select $\varphi^{2}\fpc\unkd\chi_{[t_{0},t]}(\tau)$ as a testing function in \eqref{eq:weaksol_d_a}, integrate by parts and let $t_{0}\to 0+$ to  get
  \begin{multline*}
    \!
    \!
    \!
    \frac{1}{2}
    \int_{\Oint}
    \fpc
    \unkd(t)^{2}
    \varphi^{2}
    \di x
    +
    \delta
    \int_{0}^{t}
    \int_{\Oint}
    \abs{\grad(\fpc\unkd)}^{2}
    \varphi^{2}
    \di x
    \di \tau
    \\
    \;
    =
    \frac{1}{2}
    \int_{\Oint}
    \fpc
    \initd^{2}
    \varphi^{2}
    \di x
    +
    2
    \delta
    \int_{0}^{t}
    \int_{\Oint}
    \grad(\fpc\unkd)
    \grad\varphi
    \varphi
    \unkd
    \fpc
    \di x
    \di \tau
    +
    \int_{0}^{t}
    \int_{\Oint}
    f
    \varphi^{2}
    \unkd
    \fpc
    \di x
    \di \tau
    \,.
  \end{multline*}
  The sum of the last two integrals is bounded from above by
  \begin{multline*}
    \frac{\delta}{2}
    \int_{0}^{t}
    \int_{\Oint}
    \abs{\grad(\fpc\unkd)}^{2}
    \varphi^{2}
    \di x
    \di \tau
    +
    2 \delta
    \int_{0}^{t}
    \int_{\Oint}
    \abs{\grad\varphi}^{2}
    \unkd^{2}
    \fpc^{2}
    \di x
    \di \tau
    \\
    +
    \int_{0}^{t}
    \int_{\Oint}
    f^{2}
    \varphi^{2}
    \di x
    \di \tau
    +
    \int_{0}^{t}
    \int_{\Oint}
    \unkd^{2}
    \varphi^{2}
    \fpc^{2}
    \di x
    \di \tau
    \,.
  \end{multline*}
  By taking into account \eqref{eq:energy_d_n} we infer that
  \begin{multline*}
    \int_{\Oint}
    \unkd(t)^{2}
    \varphi^{2}
    \fpc
    \di x
    +
    \delta
    \int_{0}^{t}
    \int_{\Oint}
    \abs{\grad(\fpc\unkd)}^{2}
    \varphi^{2}
    \di x
    \di \tau
    \\
    \le
    \gamma
    (1+\norma{\grad\varphi}{\infty}^{2})
    \Big(
    \int_{\Oset}
    \initd^{2}
    \di x
    +
    \int_{0}^{t}
    \int_{\Oset}
    f^{2}
    \di x
    \di t
    \Big)
    +
    \gamma
    \int_{0}^{t}
    \int_{\Oint}
    \unkd^{2}
    \varphi^{2}
    \fpc
    \di x
    \di t
    \,.
  \end{multline*}
  The claim follows after an application of Gronwall's lemma.
\end{proof}

Next we show that as $\delta\to 0$, $\unkd$ converges, in the respective spatial domains, to the solutions of the two following problems. The problem in the outer domain $\Oout$ is
\begin{alignat}{2}
  \label{eq:pde_2}
  \pder{\unk}{t}
  -
  \Lapl(\fpc\unk)
  &=
  f
  \,,
  &\qquad&
  \text{in $\OoutT$;}
  \\
  \label{eq:bdr_2}
  \pder{(\fpc\unk)}{\normal}
  &=
  0
  \,,
  &\qquad&
  \text{on $\bdr{\Oset}\times(0,T)$;}
  \\
  \label{eq:jump}
  \fpc\unk
  &=
  0
  \,,
  &\qquad&
  \text{on $\interT$;}
  \\
  \label{eq:init_2}
  \unk(x,0)
  &=
  \initd(x)
  \,,
  &\qquad&
  \text{in $\Oout$.}
\end{alignat}
Problem \eqref{eq:pde_2}--\eqref{eq:init_2} has the standard weak 
formulation: Find $\unk\in L^{2}(\OsetT)$, 
with $\fpc\unk\in L^{2}(0,T;H^{1}(\Oset))$ and satisfying \eqref{eq:jump}, 
such that
\begin{equation}
  \label{eq:weaksol_out}
  \int_{\OoutT}
  \Big[
  -\unk
  \pder{\varphi}{t}
  +
  \grad(\fpc\unk)
  \grad \varphi
  \Big]
  \di x
  \di t
  =
  \int_{\Oout}
  \initd
  \varphi(0)
  \di x
  +
  \int_{\OoutT}
  f\varphi
  \di x
  \di t
  \,,
\end{equation}
for all $\varphi\in H^{1}(\OoutT)$, with $\varphi=0$ on $\inter$ and at $t=T$.

The problem in the interior domain $\Oint$ is: 
Find $\unk$ such that $\unk$, 
$\partial \unk/\partial t\in L^{2}(\Oint_{T})$ and
\begin{alignat}{2}
  \label{eq:pde_1}
  \pder{\unk}{t}
  &=
  f
  \,,
  &\qquad&
  \text{in $L^{2}(\Oint_{T})$;}
  \\
  \label{eq:init_1}
  \unk(x,0)
  &=
  \initd(x)
  \,,
  &\qquad&
  \text{in $\Oint$, in the sense of traces.}
\end{alignat}
In fact, it is easy to prove that \eqref{eq:pde_1}--\eqref{eq:init_1}
can be written equivalently as $u=F$ in $\Oint$ where
\begin{equation}
  \label{eq:add_F}
  F(x,t)
  =
  \initd(x)
  +
  \int_{0}^{t}
  f(x,\tau)
  \di\tau
  \,,
  \qquad
  (x,t)\in\OsetT
  \,.
\end{equation}

\begin{theorem}
  \label{t:unkd_limit}
  As $\delta\to 0$, for every fixed $t_{0}>0$,
  \begin{equation}
    \label{eq:unkd_out_strong}
    \begin{aligned}
   \unkd
 &
    \to
    \unk
    \,,
    \qquad
    & \textup{strongly in $L^{2}(\OoutT)$;}
    \\
   \grad(\fpc\unkd)
 & \wto\grad(\fpc\unk)
    \,,
    \qquad
    & \textup{weakly in $L^{2}(\OoutT)$;}
    \\
    \pder{\unkd}{t}
    &
    \wto\pder{\unk}{t}
    \,,
    \qquad
    & \textup{weakly in $L^{2}(\Oout\times(t_{0},T))$,}
  \end{aligned}
  \end{equation}
  for a suitable $\unk\in L^{2}(\OoutT)$.
  In addition
  \begin{equation}
    \label{eq:unkd_int_weak}
    \unkd
    \wto
    \unk
    \,,
    \qquad
    \textup{weakly in $L^{2}(0,T;L^{2}_{\textupmd{loc}}(\Oint))$,}
  \end{equation}
  for a suitable $\unk\in L^{2}_{\textupmd{loc}}(\Oint\times[0,T])$.

  The limits of $\unkd$ solve the problems \eqref{eq:pde_2}--\eqref{eq:init_2} and \eqref{eq:pde_1}--\eqref{eq:init_1} respectively.
\end{theorem}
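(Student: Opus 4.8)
The plan is to extract convergent subsequences from the \emph{a priori} bounds, to identify their limits as solutions of the two target problems, and then to use uniqueness of those problems to upgrade subsequential convergence to convergence of the whole family as $\delta\to0$. The outer domain $\Oout$ and the inner domain $\Oint$ would be handled separately, since the nature of the convergence is different: global and strong outside, merely local and weak inside.

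\emph{Outer domain.} I would work with the auxiliary unknown $\vde=\fpcde\unkd$, for which Remark~\ref{r:vcomp_ext} already supplies, up to a subsequence, $\vde\to v_{0}$ strongly in $L^{2}(\OsetT)$, $\grad\vde\wto\grad v_{0}$ in $L^{2}(\OsetT)$ and $\partial_{t}\vde\wto\partial_{t}v_{0}$ in $L^{2}(\Oset\times(t_{0},T))$ for each $t_{0}>0$, with $v_{0}\in L^{2}(0,T;H^{1}(\Oset))$. The first term of \eqref{eq:energy_v}, evaluated on $\Oint$ where $1/\fpcde=1/(\delta\fpc)$, shows that $\int_{\OintT}\vde^{2}\di x\di t$ is of order $\delta$, hence $v_{0}=0$ a.e.\ in $\OintT$. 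I would then set $\unk:=v_{0}/\fpc$ on $\OoutT$; since on $\Oout$ one has $\unkd=\vde/\fpc$, $\grad(\fpc\unkd)=\grad\vde$ and $\partial_{t}\unkd=\fpc^{-1}\partial_{t}\vde$ (the last identity because $\fpc$ does not depend on time), and multiplication by the fixed weight $\fpc^{-1}\in L^{\infty}$ preserves strong and weak $L^{2}$ limits, one obtains exactly \eqref{eq:unkd_out_strong}. The interface condition \eqref{eq:jump} would follow from the fact that $v_{0}(\cdot,t)\in H^{1}(\Oset)$ vanishes identically on $\Oint$, so that the trace on $\interT$ of $\fpc\unk$, taken from the $\Oout$ side, equals the null trace taken from the $\Oint$ side. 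Finally I would pass to the limit in the weak formulation \eqref{eq:weak_form22} using test functions $\phi\in H^{1}(\OsetT)$ that vanish on $\interT$ and at $t=T$ and are extended by zero into $\OintT$: then all integrals localize to $\OoutT$, the linear terms pass to the limit by the strong and weak convergences just established, the right-hand side is unchanged, and one recovers precisely \eqref{eq:weaksol_out}, so the outer limit solves \eqref{eq:pde_2}--\eqref{eq:init_2}.

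\emph{Inner domain.} The key ingredient here is Lemma~\ref{l:energy_dint}: for a compact $K\subset\subset\Oint$, choosing $\varphi\in\mathcal{C}^{1}_{0}(\Oint)$ with $0\le\varphi\le1$ and $\varphi\equiv1$ on $K$, it provides the $\delta$-uniform bounds $\sup_{0<t<T}\int_{K}\unkd(t)^{2}\di x\le\gamma_{K}$ and $\delta\int_{0}^{T}\int_{K}\abs{\grad(\fpc\unkd)}^{2}\di x\di t\le\gamma_{K}$ (using also \eqref{eq:m7}). Exhausting $\Oint$ by compacts and extracting a diagonal subsequence, I would get $\unkd\wto\unk$ in $L^{2}(0,T;L^{2}_{\textupmd{loc}}(\Oint))$, which is \eqref{eq:unkd_int_weak}; and from the second bound $\delta\grad(\fpc\unkd)=\sqrt{\delta}\,\big(\sqrt{\delta}\,\grad(\fpc\unkd)\big)\to0$ strongly in $L^{2}_{\textupmd{loc}}(\OintT)$. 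Taking $\phi\in\mathcal{C}^{\infty}_{c}(\Oint\times[0,T))$ in \eqref{eq:weak_form22} (extended by zero outside its support), the diffusion term $\delta\int_{\OintT}\grad(\fpc\unkd)\grad\phi\di x\di t$ vanishes in the limit and $\int_{\OintT}\unkd\phi_{t}\to\int_{\OintT}\unk\phi_{t}$, so one is left with $-\int_{\OintT}\unk\,\phi_{t}\di x\di t=\int_{\OintT}f\phi\di x\di t+\int_{\Oint}\initd(x)\phi(x,0)\di x$ for all such $\phi$. This is the weak form of $\partial_{t}\unk=f$ with $\unk(\cdot,0)=\initd$; comparing with the explicit function $F$ of \eqref{eq:add_F}, which satisfies the same identity, one concludes $\unk=F$, so $\unk\in L^{2}(\OintT)$ and the inner limit solves \eqref{eq:pde_1}--\eqref{eq:init_1}.

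Since \eqref{eq:pde_2}--\eqref{eq:init_2} is a linear parabolic problem with homogeneous Dirichlet data on $\interT$, hence with a unique weak solution, and \eqref{eq:pde_1}--\eqref{eq:init_1} has the unique solution $F$, the limits do not depend on the extracted subsequence and the convergences hold for the entire family $\delta\to0$. I expect the genuine difficulty to be the correct identification of the limit interface condition $\fpc\unk=0$ on $\interT$, i.e.\ the effective impermeability of the inclusions. This cannot be detected at the level of $\unkd$, which concentrates mass near $\interT$ as $\delta\to0$ — precisely the reason why Lemma~\ref{l:energy_dint} carries the factor $\delta$ and is only local, and why \eqref{eq:unkd_int_weak} is only a local statement; it emerges only after passing to $v_{0}=\lim\vde$, whose global $H^{1}(\Oset)$-regularity forces continuity of traces across $\interT$, together with the vanishing of $v_{0}$ inside $\Oint$. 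A minor but necessary point along the way is to check that the test functions used to localize the weak formulation to $\Oout$, respectively to $\Oint$, extend by zero to genuine $H^{1}(\OsetT)$ functions, which is ensured by requiring them to vanish on $\interT$.
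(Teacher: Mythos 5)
Your proposal is correct and follows essentially the same route as the paper: it treats the outer domain through the auxiliary variable $\vde=\fpcde\unkde$ and the compactness of Remark~\ref{r:vcomp_ext}, shows $v_{0}=0$ in $\OintT$ from the weighted energy bound \eqref{eq:energy_v} to obtain the interface condition \eqref{eq:jump}, passes to the limit in the weak formulation with zero-extended test functions, and handles the interior through the localized estimate of Lemma~\ref{l:energy_dint}, the vanishing of the $\delta$-weighted diffusion term, and the identification $\unk=F$. The only differences (deriving \eqref{eq:jump} from the $H^{1}(\Oset)$-regularity of $v_{0}$ rather than from convergence of traces, and adding the uniqueness argument to upgrade subsequential convergence) are cosmetic and harmless.
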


\begin{proof}
Let us recall the notation $\funkd=\fpcd\unkd$, 
i.e., $\funkd=\fpc\unkd$ in $\Oout_T$, 
and setting $\unk=\funk/\fpc$ in $\Oout_T$, 
as a consequence of Remark~\ref{r:vcomp_ext} we have \eqref{eq:unkd_out_strong}.

Next we show that $\unk$ solves in the weak sense 
\eqref{eq:pde_2}--\eqref{eq:init_2}.
First note that
the function $\funkd/\sqrt{\fpcd}$ converges weakly in $L^{2}(\OsetT)$, owing to \eqref{eq:energy_v}. Therefore, $\funk=0$ in $\OintT$, i.e., $\funkd$, $\grad\funkd\wto 0$ weakly in $L^{2}(\OintT)$.

  Thus, by continuity of traces, as $\delta\to0$,
  \begin{equation}
    \label{eq:jump_lim}
    0
    =
    [\fpcd\unkd]
    =
    \funkd^{\outtr}\restr{\inter}
    -
    \funkd^{\inttr}\restr{\inter}
    \to
    \funk^{\outtr}\restr{\inter}
    =
    (\fpc\unk)^{\outtr}\restr{\inter}
    \,.
  \end{equation}
  This proves \eqref{eq:jump}.

Moreover, 
in \eqref{eq:weaksol_d_a} take $\varphi\in H^{1}(\OoutT)$, 
with $\varphi=0$ on $\inter$ and at $t=T$ and extend to $0$ in 
$\Oint_T$. We obtain
  \begin{equation}
    \label{eq:weaksol_d_out}
    \int_{\OoutT}
    \Big[
    -
    \unkd
    \pder{\varphi}{t}
    +
    \grad(\fpc\unkd)
    \grad \varphi
    \Big]
    \di x
    \di t
    =
    \int_{\Oout}
    \initd
    \varphi(0)
    \di x
    +
    \int_{\OoutT}
    f\varphi
    \di x
    \di t
    \,.
  \end{equation}
  As $\delta\to 0$ we get \eqref{eq:weaksol_out}.

  As to the problem in the interior domain $\Oint$, we remark that from \eqref{eq:energy_dint_n} our claim \eqref{eq:unkd_int_weak} follows.

Next, we prove that $\unk$ solve \eqref{eq:pde_1}--\eqref{eq:init_1} 
weakly.
Consider $\varphi\in \mathcal{C}^{1}(\Oint_T)$ 
and such that its support is bounded away from $\partial\Oint$ and $t=T$.
From \eqref{eq:weaksol_d_a} we have 
  \begin{multline}
    \label{eq:unkdt_int_weak}
    \int_{\OintT}
    f
    \varphi
    \di x
    \di t
    +
    \int_{\Oint}
    \initd
    \varphi(0)
    \di x
    =
    -
    \int_{\OintT}
    \unkd
    \pder{\varphi}{t}
    \di x
    \di t
    +
    \int_{\OintT}
    \grad\funkd
    \grad\varphi
    \di x
    \di t
    \\
    \to
    -
    \int_{\OintT}
    \unk
    \pder{\varphi}{t}
    \di x
    \di t
    \,.
  \end{multline}
From \eqref{eq:unkdt_int_weak}, 
standard arguments prove that $\unk$ is given by $F$ where $F$ 
is defined in \eqref{eq:add_F}.
\end{proof}


\subsection{Limiting behavior in the whole domain.}
\label{s:add}
We point out that, as we will show below,  
$L^{2}$ convergence can not take place in our case
in the whole domain $\Oset_T$.
We investigate here the concentration of mass on $\inter$ as $\delta\to 0$.
Here we denote for $1>\sigma>0$
\begin{equation*}
  \Oint(\sigma)
  =
  \{
  x\in\Oint
  \mid
  \dist(x,\inter)<\sigma
  \}
  \,,
\end{equation*}
so that
\begin{equation*}
  \abs{\Oint(\sigma)}
  \le
  \gamma
  \sigma
  \,.
\end{equation*}

The next Lemma is independent of the convergence results 
of Theorem~\ref{t:unkd_limit} and relies on the degenerating diffusion in $\Oint$ as $\delta\to0$.

\begin{lemma}
  \label{l:add_expand}
  We have for all fixed $0<\sigma<1$ and $0<p<1$,
  \begin{equation}
    \label{eq:add_expand_n}
    \int_{\Oint\setminus\Oint(\sigma)}
    (\unkd(t)-F(t))^{2}
    \di x
    \le
    \gamma
    e^{\gamma t}
    [
    \delta
    (\delta^{-p}+\sigma^{-4})
    +
    c(\delta)
    ]
    \,,
    \qquad
    t>0
    \,,
  \end{equation}
where $F$ has been defined in \eqref{eq:add_F}.
Here $\gamma$ is a constant depending on 
$T$, $\|\bar u\|_{L^2(\Oset)}$, $\|f\|_{L^2(\Oset)}$, 
but not on $\sigma$, $p$, and $\delta$. 
Moreover, 
$c(\delta)\to0$ as $\delta\to0$.
\end{lemma}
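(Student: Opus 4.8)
The plan is to compare $\unkd$ to $F$ only through the quantity that is actually Sobolev regular, namely $\fpc\unkd$, since neither $\unkd$ nor the data $\initd,f$ (hence $F$) are known to lie in $H^{1}$. By Definition~\ref{d:weaksol_d} we have $\fpcd\unkd\in L^{2}(0,T;H^{1}(\Oset))$ and $\fpcd=\delta\fpc$ on $\Oint$, so $W_{\delta}:=\fpc\unkd\in L^{2}(0,T;H^{1}(\Oint))$. Restricting \eqref{eq:weaksol_d_a} to test functions whose spatial support is compactly contained in $\Oint$ and using $\partial_{t}(\fpc\unkd)=\fpc\,\partial_{t}\unkd$, one checks that $W_{\delta}$ solves weakly $\fpc^{-1}\partial_{t}W_{\delta}-\delta\Lapl W_{\delta}=f$ in $\OintT$. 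Next I would choose, for $\rho>0$, functions $g_{\rho}^{0}\in\mathcal{C}^{\infty}(\overline{\Oset})$ and $g_{\rho}\in\mathcal{C}^{\infty}(\overline{\Oset}\times[0,T])$ with $g_{\rho}^{0}\to\fpc\initd$ in $L^{2}(\Oset)$ and $g_{\rho}\to\fpc f$ in $L^{2}(\OsetT)$ as $\rho\to0$, and set $G_{\rho}(x,t)=g_{\rho}^{0}(x)+\int_{0}^{t}g_{\rho}(x,\tau)\di\tau$, a smooth surrogate for $\fpc F$ with $\partial_{t}G_{\rho}=g_{\rho}$, $G_{\rho}(\cdot,0)=g_{\rho}^{0}$ and $\sup_{0<t<T}\norma{G_{\rho}(t)-\fpc F(t)}{L^{2}(\Oset)}\to0$.

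The heart of the argument is an energy estimate for $Z_{\delta}:=W_{\delta}-G_{\rho}=\fpc\unkd-G_{\rho}\in L^{2}(0,T;H^{1}(\Oint))$, which solves weakly $\fpc^{-1}\partial_{t}Z_{\delta}-\delta\Lapl Z_{\delta}=R_{\rho}+\delta\Lapl G_{\rho}$ in $\OintT$, with $R_{\rho}:=f-\fpc^{-1}g_{\rho}=\fpc^{-1}(\fpc f-g_{\rho})$ small in $L^{2}(\OsetT)$. Fix a cut-off $\varphi\in\mathcal{C}^{1}_{0}(\Oint)$, $0\le\varphi\le1$, equal to $1$ on $\Oint\setminus\Oint(\sigma)$, vanishing on $\Oint(\sigma/2)$, with $\norma{\grad\varphi}{\infty}\le\gamma\sigma^{-1}$ and $\grad\varphi$ supported in the strip $S_{\sigma}:=\{x\in\Oint\mid\sigma/2\le\dist(x,\inter)<\sigma\}$. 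Testing the equation for $Z_{\delta}$ against $\varphi^{2}Z_{\delta}$ (multiplied by the usual time cut-off supported away from $t=0$ and then let expand to $(0,t)$, using $\unkd\in\mathcal{C}([0,T];L^{2}(\Oset))$ and $\unkd(\cdot,0)=\initd$ to identify $Z_{\delta}(\cdot,0)=\fpc\initd-g_{\rho}^{0}$) yields
\[
\tfrac12\int_{\Oint}\frac{\varphi^{2}}{\fpc}Z_{\delta}(t)^{2}\di x+\delta\int_{0}^{t}\int_{\Oint}\varphi^{2}\abs{\grad Z_{\delta}}^{2}\di x\di\tau=\tfrac12\int_{\Oint}\frac{\varphi^{2}}{\fpc}(\fpc\initd-g_{\rho}^{0})^{2}\di x+\mathcal{R},
\]
where $\mathcal{R}$ is the sum, over $\Oint\times(0,t)$, of $\int R_{\rho}\varphi^{2}Z_{\delta}$, $-2\delta\int\varphi Z_{\delta}\,\grad\varphi\scpr\grad Z_{\delta}$, $-\delta\int\varphi^{2}\grad G_{\rho}\scpr\grad Z_{\delta}$ and $-2\delta\int\varphi Z_{\delta}\,\grad\varphi\scpr\grad G_{\rho}$.

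Now I would estimate $\mathcal{R}$ by Young's inequality. The terms carrying $\abs{\grad Z_{\delta}}^{2}$ are absorbed on the left. The terms $\delta\int\varphi^{2}\abs{\grad G_{\rho}}^{2}$ and $\delta\int Z_{\delta}^{2}\abs{\grad\varphi}^{2}$ (from the last two pieces) are bounded, respectively, by $\delta\Lambda_{\rho}$, with $\Lambda_{\rho}:=\int_{0}^{T}\norma{\grad G_{\rho}(\tau)}{L^{2}(\Oset)}^{2}\di\tau$ a finite, possibly large, constant depending only on $\rho$, and by $\gamma\delta\sigma^{-2}\int_{0}^{t}\int_{S_{\sigma}}\big((\fpc\unkd)^{2}+G_{\rho}^{2}\big)$; here $\int_{S_{\sigma}}G_{\rho}^{2}\le\gamma$ uniformly, while Lemma~\ref{l:energy_dint} applied to an auxiliary cut-off in $\mathcal{C}^{1}_{0}(\Oint)$ equal to $1$ on $S_{\sigma}$ with gradient $\le\gamma\sigma^{-1}$ gives $\int_{S_{\sigma}}(\fpc\unkd(\tau))^{2}\le\gamma\sigma^{-2}$ for every $\tau$, which is exactly what turns $\delta\sigma^{-2}\cdot\sigma^{-2}$ into the $\delta\sigma^{-4}$ of \eqref{eq:add_expand_n}. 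Finally $\int_{0}^{t}\int R_{\rho}\varphi^{2}Z_{\delta}\le\gamma\norma{\fpc f-g_{\rho}}{L^{2}(\OsetT)}^{2}+\gamma\int_{0}^{t}\int_{\Oint}\fpc^{-1}\varphi^{2}Z_{\delta}^{2}$, the last integral being the Gronwall term. Applying Gronwall's lemma, then using $\fpc\ge C$ and $\abs{\unkd-F}\le C^{-1}(\abs{Z_{\delta}}+\abs{G_{\rho}-\fpc F})$, I obtain
\[
\int_{\Oint\setminus\Oint(\sigma)}(\unkd(t)-F(t))^{2}\di x\le\gamma e^{\gamma t}\big(c(\rho)+\delta\Lambda_{\rho}+\delta\sigma^{-4}\big),
\]
with $c(\rho):=\gamma\big(\norma{\fpc\initd-g_{\rho}^{0}}{L^{2}(\Oset)}^{2}+\norma{\fpc f-g_{\rho}}{L^{2}(\OsetT)}^{2}+\sup_{t}\norma{G_{\rho}(t)-\fpc F(t)}{L^{2}(\Oset)}^{2}\big)\to0$ as $\rho\to0$, and $\gamma$ depending only on $C$, $\norma{\fpc}{\infty}$, the geometry of $\Oint$, $T$ and the constant of Lemma~\ref{l:energy_dint}, together with $\norma{\initd}{L^{2}(\Oset)}$ and $\norma{f}{L^{2}(\OsetT)}$ — in particular not on $\sigma$, $p$, $\delta$.

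To conclude, given $p\in(0,1)$ I would fix $\rho=\rho(\delta)$ with $\rho(\delta)\to0$ as $\delta\to0$ while $\Lambda_{\rho(\delta)}\le\delta^{-p}$ (possible because $\delta^{-p}\to\infty$), and set $c(\delta):=c(\rho(\delta))\to0$; then $\delta\Lambda_{\rho(\delta)}\le\delta^{1-p}=\delta\,\delta^{-p}$ and \eqref{eq:add_expand_n} follows. The main obstacle I expect is precisely the lack of $H^{1}$ regularity of $\unkd$, $\initd$, $f$: it forces the comparison with the smooth surrogate $G_{\rho}$ of $\fpc F$ and the resulting two-parameter bookkeeping — the smoothing cost $\delta\Lambda_{\rho}$ against the approximation error $c(\rho)$ — which is then balanced through the $\delta$-dependent choice of $\rho$; the $\sigma$-localization, together with the mass concentration near $\inter$ handled via Lemma~\ref{l:energy_dint}, is the remaining (by now routine) technical ingredient and is what produces the $\sigma^{-4}$.
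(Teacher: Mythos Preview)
Your proof is correct and follows essentially the same strategy as the paper: regularize $\fpc F$ by a smooth surrogate, run a localized energy estimate for $\fpc\unkd$ minus that surrogate with the cut-off $\varphi$, control the cross term via Lemma~\ref{l:energy_dint}, and close by Gronwall. The only cosmetic differences are that the paper ties the smoothing parameter to $\delta$ from the outset (imposing $\norma{\grad(\fpc\initdd)}{2}^{2}+\norma{\grad(\fpc f_{\delta})}{2}^{2}\le\gamma\delta^{-p}$ by relabeling) rather than introducing a separate $\rho$ and choosing $\rho(\delta)$ at the end, and that the paper bounds the cross term through the gradient part of Lemma~\ref{l:energy_dint} (i.e., $\delta\int_{\supp\varphi}\abs{\grad(\fpc\unkd)}^{2}\le\gamma\sigma^{-2}$) whereas you use the $L^{2}$ part ($\int_{S_{\sigma}}(\fpc\unkd)^{2}\le\gamma\sigma^{-2}$); both routes yield the same $\delta\sigma^{-4}$.
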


\begin{proof}
  We introduce smooth approximations $(\fpc\initd)_{\delta}$, $(\fpc f)_{\delta}$ such
  that $(\fpc\initd)_{\delta}\to \fpc\initd$ in $L^{2}(\Oset)$, $(\fpc f)_{\delta}\to \fpc f$ in $L^{2}(\OsetT)$ as
  $\delta\to0$. Then we set $\initdd=(\fpc\initd)_{\delta}/\fpc\to \initd$, $f_{\delta}=(\fpc f)_{\delta}/\fpc \to f$ in
  $L^{2}(\Oset)$; we may assume without loss of generality that
  \begin{equation}
    \label{eq:add_expand_jj}
    \norma{\grad (\fpc\initdd)}{L^{2}(\Oset)}^{2}
    +
    \norma{\grad (\fpc f_{\delta})}{L^{2}(\Oset)}^{2}
    \le
    \gamma
    \delta^{-p}
    \,,
  \end{equation}
  for $p$ as above, by relabeling if necessary 
  the sequences $\initdd$, $f_{\delta}$.
  Define
  \begin{equation*}
    F_{\delta}(x,t)
    =
    \initdd(x)
    +
    \int_{0}^{t}
    f_{\delta}(x,\tau)
    \di \tau
    \,,
    \qquad
    (x,t)\in \OsetT
    \,.
  \end{equation*}
  Use in the weak formulation \eqref{eq:weaksol_d_a}
  the test function $\fpc(\unkd - F_{\delta})\varphi^{2}$, with
  $\varphi\in \mathcal{C}^{1}_{0}(\Oint)$, and
  \begin{equation*}
    \varphi(x)
    =
    1
    \,,
    \quad
    x\not\in \Oint(\sigma)
    \,;
    \quad
    \varphi(x)
    =
    0
    \,,
    \quad
    x\in \Oint(\sigma/2)
    \,;
    \quad
    \abs{\grad\varphi}
    \le
    \gamma
    \sigma^{-1}
    \,.
  \end{equation*}
Note that this test function has the required regularity 
due to the definitions above and to the fact that $\fpc$ is independent
of $t$.
  After routine calculations starting from \eqref{eq:weaksol_d_a}, we find
  \begin{equation*}
      \frac{1}{2}
      \int_{\Oint}
      (\unkd(t)-F_{\delta}(t))^{2}
      \varphi^{2}
      \fpc
      \di x
      +
      \delta
      \int_{0}^{t}
      \int_{\Oint}
      \abs{\grad[\fpc(\unkd-F_{\delta})]}^{2}
      \varphi^{2}
      \di x
      \di \tau
      =
      \sum_{h=1}^{3}
      E_{h}
      \,.
  \end{equation*}
  Here, the term with integration in time contributes
  \begin{multline*}
    E_{1}
    =
    \int_{0}^{t}
    \int_{\Oint}
    (f-f_{\delta})
    (\unkd-F_{\delta})
    \varphi^{2}
    \fpc
    \di x
    \di \tau
    +
    \frac{1}{2}
    \int_{\Oint}
    (\initd-\initdd)^{2}
    \varphi^{2}
    \fpc
    \di x
    \\
    \le
    \gamma
    \norma{f-f_{\delta}}{L^{2}(\OsetT)}^{2}
    +
    \gamma
    \norma{\initd-\initdd}{L^{2}(\Oset)}^{2}
    +
    \int_{0}^{t}
    \int_{\Oint}
    (\unkd-F_{\delta})^{2}
    \varphi^{2}
    \di x
    \di \tau
    \,.
  \end{multline*}
  Moreover, we added the following term to construct the correct energy:
  \begin{multline*}
    E_{2}
    =
    -
    \delta
    \int_{0}^{t}
    \int_{\Oint}
    \varphi^{2}
    \grad(\fpc F_{\delta})
    \grad[\fpc(\unkd-F_{\delta})]
    \di x
    \di \tau
    \\
    \le
    \frac{\delta}{2}
    \int_{0}^{t}
    \int_{\Oint}
    \abs{\grad[\fpc(\unkd-F_{\delta})]}^{2}
    \varphi^{2}
    \di x
    \di \tau
    +
    \frac{\delta}{2}
    \int_{0}^{t}
    \int_{\Oint}
    \abs{\grad(\fpc F_{\delta})}^{2}
    \varphi^{2}
    \di x
    \di \tau
    \,.
  \end{multline*}
Finally, from the integration by parts we get the term 
  \begin{multline*}
    E_{3}
    =
    -
    2
    \delta
    \int_{0}^{t}
    \int_{\Oint}
    (\unkd- F_{\delta})
    \varphi
    \fpc
    \grad(\fpc\unkd)
    \grad\varphi
    \di x
    \di \tau
    \\
    \le
    \gamma
    \delta
    \norma{\grad\varphi}{\infty}^{2}
    \int_{0}^{t}
    \int_{\supp\varphi}
    \delta
    \abs{\grad(\fpc\unkd)}^{2}
    \di x
    \di \tau
    +
    \int_{0}^{t}
    \int_{\Oint}
    (\unkd-F_{\delta})^{2}
    \varphi^{2}
    \di x
    \di \tau
    \\
    \le
    \gamma
    \delta
    \sigma^{-4}
    +
    \int_{0}^{t}
    \int_{\Oint}
    (\unkd-F_{\delta})^{2}
    \varphi^{2}
    \di x
    \di \tau
    \,,
  \end{multline*}
where the bound is a consequence of \eqref{eq:energy_dint_n} 
for a suitable cut off function identically equal to $1$ over $\supp\varphi$ 
and with gradient still bounded by $\gamma/\sigma$.

  Thus collecting all the estimates above we get
  \begin{multline*}
    \int_{\Oint}
    (\unkd(t)-F_{\delta}(t))^{2}
    \varphi^{2}
    \di x
    \le
    \gamma
    \norma{f-f_{\delta}}{L^{2}(\OsetT)}^{2}
    +
    \gamma
    \norma{\initd-\initdd}{L^{2}(\Oset)}^{2}
    \\
    +
    \gamma
    \delta(\delta^{-p}+\sigma^{-4})
    +
    \gamma
    \int_{0}^{t}
    \int_{\Oint}
    (\unkd-F_{\delta})^{2}
    \varphi^{2}
    \di x
    \di \tau
    \,.
  \end{multline*}
  The claim follows now from Gronwall's lemma and from the obvious fact
  \begin{multline*}
    \int_{\Oint}
    (\unkd(t)-F(t))^{2}
    \varphi^{2}
    \di x
    \le
    2
    \int_{\Oint}
    [
    (\unkd(t)-F_{\delta}(t))^{2}
    +
    (F_{\delta}(t)-F(t))^{2}
    ]
    \varphi^{2}
    \di x
    \\
    \le
    2
    \int_{\Oint}
    (\unkd(t)-F_{\delta}(t))^{2}
    \varphi^{2}
    \di x
    +
    4
    \norma{\initd-\initdd}{L^{2}(\Oset)}^{2}
    +
    4T
    \norma{f-f_{\delta}}{L^{2}(\OsetT)}^{2}
    \,.
  \end{multline*}
\end{proof}

\begin{corollary}
  \label{co:add_expand}
  We have uniformly for $0\le t\le T$,
  \begin{equation}
    \label{eq:add_expand_m}
    \lim_{\delta\to 0}
    \int_{\Oint\setminus\Oint(\delta^{q})}
    (\unkd(t)-F(t))^{2}
    \di x
    =
    0
    \,,
  \end{equation}
  for any fixed $0<q<1/4$.
\end{corollary}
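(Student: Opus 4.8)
The plan is to deduce this directly from Lemma~\ref{l:add_expand} by choosing the cutoff width $\sigma$ to be a suitable power of $\delta$. First I would fix $q\in(0,1/4)$ as in the statement and, crucially, fix once and for all some exponent $p\in(0,1)$ (for instance $p=1/2$) before sending $\delta\to0$: this freezes the constant $\gamma$ appearing in \eqref{eq:add_expand_n}, which by the statement of Lemma~\ref{l:add_expand} does not depend on $\sigma$, $p$, or $\delta$. I would then substitute $\sigma=\delta^{q}$ into \eqref{eq:add_expand_n}, which gives, for every $t\in[0,T]$,
\begin{equation*}
  \int_{\Oint\setminus\Oint(\delta^{q})}
  (\unkd(t)-F(t))^{2}
  \di x
  \le
  \gamma\, e^{\gamma T}
  \big[
  \delta^{1-p}
  +
  \delta^{1-4q}
  +
  c(\delta)
  \big]
  \,,
\end{equation*}
where I used $e^{\gamma t}\le e^{\gamma T}$, $\delta\,\delta^{-p}=\delta^{1-p}$ and $\delta\,\sigma^{-4}=\delta^{1-4q}$.

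Next I would observe that each of the three terms in brackets vanishes as $\delta\to0$: $\delta^{1-p}\to0$ since $p<1$; $\delta^{1-4q}\to0$ since $q<1/4$ forces $1-4q>0$; and $c(\delta)\to0$ by Lemma~\ref{l:add_expand}. Since the right-hand side no longer depends on $t$, this yields \eqref{eq:add_expand_m} uniformly on $[0,T]$, which is exactly the claim.

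The argument has no real obstacle: the only point requiring attention — and the reason $p$ must be chosen before the limit is taken — is that $p$ cannot be allowed to depend on $\delta$, since otherwise the normalization \eqref{eq:add_expand_jj} of the smoothing sequences $\initdd$, $f_{\delta}$ could make $\gamma$ degenerate. With $p$ frozen and $\sigma=\delta^{q}$, the corollary is a one-line consequence of Lemma~\ref{l:add_expand}.
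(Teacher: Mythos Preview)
Your proposal is correct and follows exactly the paper's approach: the paper's proof is the single line ``We need only take $\sigma=\delta^{q}$ in \eqref{eq:add_expand_n},'' and you have simply spelled out the straightforward details of why this substitution forces the right-hand side to vanish uniformly in $t$.
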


\begin{proof}
  We need only take $\sigma=\delta^{q}$ in \eqref{eq:add_expand_n}.
\end{proof}

Estimates \eqref{eq:energy_d_n}, \eqref{eq:energyt0_d_n} 
yield similar inequalities 
for $\unk$ in $\Oout$, and therefore on invoking, e.g., 
the result 
\cite[Theorem~8.12]{Gilbarg:Trudinger:1983}, 
we have that $\fpc\unk \in L^{2}(t_{0},T;H^{2}(\Oout))$ for every $t_{0}>0$, so that $\grad (\fpc\unk)$ has a trace in $L^{2}(t_{0},T;H^{1/2}(\inter))$. 
However, the function
\begin{equation}
  \label{eq:elliptic_w}
  w(x;t)
  =
  \int_{0}^{t}
  \fpc(x)
  \unk(x,\tau)
  \di\tau
  \,,
  \qquad
  x\in\Oout
  \,,
\end{equation}
solves for each fixed $t>0$ the elliptic equation
\begin{equation}
  \label{eq:elliptic_pde}
  \Lapl w
  =
  -
  \int_{0}^{t}
  f(\tau)
  \di\tau
  -
  \initd
  +
  \unk(t)
  \,,
  \qquad
  \text{in $\Oout$,}
\end{equation}
with null Dirichlet data on $\inter$ and null Neumann data on
$\bdr{\Oset}$. This is a consequence of \eqref{eq:weaksol_out} and of
a suitable choice of factorized test functions.
Then $w(t)\in H^{2}(\Oout)$, and
\begin{equation}
  \label{eq:elliptic_flux}
  \int_{\Oset}
  \varphi
  \di\msr{bu}{t}
  :=
  \int_{\inter}
  \pder{w}{\normint}
  \varphi
  \di S
  \qquad
  \varphi\in \mathcal{C}(\overline{\Oset})
  \,,
\end{equation}
is defined as a linear bounded functional e.g., on $\mathcal{C}(\overline{\Oset})$, that is a measure.   
The flux of $\fpc \unk$ through $\inter$ should be in general understood in this sense, though
of course the normal derivative $\partial (\fpc\unk)/\partial n$ on $\inter$ exists in the classical sense under suitable regularity assumptions on the data.

\begin{theorem}
  \label{t:add_limit}
  We have for $0<q<1/4$ and for all $\varphi\in \mathcal{C}^{1}(\overline{\Oset})$
  \begin{equation}
    \label{eq:add_limit_n}
    \lim_{\delta\to0}
    \int_{\Oint(\delta^{q})}
    \unkd(t)
    \varphi
    \di x
    =
    \int_{\Oset}
    \varphi
    \di\msr{\fpc\unk}{t}
    \,,
  \end{equation}
  where $\unk$ is the solution to \eqref{eq:pde_2}--\eqref{eq:init_2}.
\end{theorem}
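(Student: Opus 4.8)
The plan is to write the strip mass as the difference
\[
\int_{\Oint(\delta^q)}\unkd(t)\varphi\di x
=
\int_{\Oint}\unkd(t)\varphi\di x
-
\int_{\Oint\setminus\Oint(\delta^q)}\unkd(t)\varphi\di x
\]
and to pass to the limit in the two pieces separately. The second piece is the easy one: by Corollary~\ref{co:add_expand} we have $\unkd(t)\to F(t)$ in $L^{2}(\Oint\setminus\Oint(\delta^q))$ uniformly in $t$, and since $\abs{\Oint(\delta^q)}\le\gamma\delta^q\to0$ this piece tends to $\int_{\Oint}F(t)\varphi\di x$, with $F$ as in \eqref{eq:add_F}. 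Hence the real content of the statement is that $\int_{\Oint}\unkd(t)\varphi\di x$ converges to $\int_{\Oint}F(t)\varphi\di x+\int_{\Oset}\varphi\di\msr{\fpc\unk}{t}$: the bulk $F$--contribution will cancel and only the exterior flux through $\inter$ will remain in the strip.

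To compute $\lim_{\delta\to0}\int_{\Oint}\unkd(t)\varphi\di x$ I would introduce the time primitive $\zeta_{\delta}(x;t)=\int_{0}^{t}\funkd(x,\tau)\di\tau$; since $\funkd=\fpcd\unkd$ is a Sobolev function on all of $\Oset$, $\zeta_{\delta}(t)\in H^{1}(\Oset)$. Testing \eqref{eq:weaksol_d_a} with $\psi(x)\chi_{(t_{0},t)}(\tau)$ and letting $t_{0}\to0$ gives
\[
\int_{\Oset}\grad\zeta_{\delta}(t)\scpr\grad\psi\di x
=
\int_{\Oset}\bigl(F(t)-\unkd(t)\bigr)\psi\di x
\qquad\text{for all }\psi\in H^{1}(\Oset),
\]
so $\zeta_{\delta}(t)$ is the variational solution of $-\Lapl\zeta_{\delta}(t)=F(t)-\unkd(t)$ in $\Oset$ with homogeneous Neumann data on $\bdr\Oset$; in particular $\Lapl\zeta_{\delta}(t)\in L^{2}(\Oset)$ has no singular part on $\inter$, so the (a priori $H^{-1/2}(\inter)$) normal derivative of $\zeta_{\delta}(t)$ is continuous across $\inter$. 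Since $\zeta_{\delta}=\delta\int_{0}^{t}\fpc\unkd\di\tau$ in $\Oint$ we have $\unkd(t)=F(t)+\Lapl\zeta_{\delta}(t)$ there, so Green's formula on $\Oint$ yields
\[
\int_{\Oint}\unkd(t)\varphi\di x
=
\int_{\Oint}F(t)\varphi\di x
+\Bigl\langle\pder{\zeta_{\delta}(t)}{\normint},\varphi\Bigr\rangle_{\inter}
-\int_{\Oint}\grad\zeta_{\delta}(t)\scpr\grad\varphi\di x .
\]
In $\Oint$, $\grad\zeta_{\delta}(t)=\int_{0}^{t}\grad\funkd\di\tau$ and $\grad\funkd\wto0$ weakly in $L^{2}(\OintT)$ by Theorem~\ref{t:unkd_limit}, so (integrating in $\tau$ and testing against $\grad\varphi$) the last term tends to $0$.

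It remains to pass the interface term to the limit, which I would do from the $\Oout$ side. By the continuity of the normal derivative across $\inter$, $\bigl\langle\partial\zeta_{\delta}(t)/\partial\normint,\varphi\bigr\rangle_{\inter}=\bigl\langle\partial w_{\delta}(t)/\partial\normint,\varphi\bigr\rangle_{\inter}$, where $w_{\delta}(x;t):=\int_{0}^{t}\fpc(x)\unkd(x,\tau)\di\tau$ (which coincides with $\zeta_{\delta}$ on $\Oout$) solves $-\Lapl w_{\delta}(t)=F(t)-\unkd(t)$ in $\Oout$ with homogeneous Neumann data on $\bdr\Oset$, the $\unkd$--level counterpart of \eqref{eq:elliptic_w}--\eqref{eq:elliptic_pde}. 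Green's formula on $\Oout$ then gives
\[
\Bigl\langle\pder{w_{\delta}(t)}{\normint},\varphi\Bigr\rangle_{\inter}
=
-\int_{\Oout}\bigl(\unkd(t)-F(t)\bigr)\varphi\di x
-\int_{\Oout}\grad w_{\delta}(t)\scpr\grad\varphi\di x .
\]
Here $\unkd(t)\to\unk(t)$ in $L^{2}(\Oout)$ — this follows from the strong $L^{2}(\OoutT)$ convergence in Theorem~\ref{t:unkd_limit} together with the equicontinuity in time granted by the bound on $\partial\unkd/\partial t$ in $L^{2}(\Oout\times(t_{0},T))$ from \eqref{eq:energyt0_d_n} — and $\grad w_{\delta}(t)=\int_{0}^{t}\grad(\fpc\unkd)\di\tau\wto\int_{0}^{t}\grad(\fpc\unk)\di\tau=\grad w(t)$ weakly in $L^{2}(\Oout)$, with $w$ the function in \eqref{eq:elliptic_w}. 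Passing to the limit, the right-hand side converges to $-\int_{\Oout}\bigl(\unk(t)-F(t)\bigr)\varphi\di x-\int_{\Oout}\grad w(t)\scpr\grad\varphi\di x$, which by Green's formula for $w(t)\in H^{2}(\Oout)$ (solving \eqref{eq:elliptic_pde} with null Dirichlet data on $\inter$ and null Neumann data on $\bdr\Oset$) equals $\int_{\inter}\pder{w(t)}{\normint}\varphi\di S=\int_{\Oset}\varphi\di\msr{\fpc\unk}{t}$ by the definition \eqref{eq:elliptic_flux}. Collecting the two pieces, the $\int_{\Oint}F(t)\varphi\di x$ terms cancel and \eqref{eq:add_limit_n} follows.

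The step I expect to be the main obstacle is precisely this passage of the interface flux to the limit at a fixed time $t$: it rests on (i) identifying $\zeta_{\delta}(t)$ as a genuine $H^{1}(\Oset)$ function whose Laplacian has no Dirac mass on $\inter$ — so that the normal derivatives from the two sides of $\inter$ agree and the interior boundary term can be transported onto $\Oout$, where \eqref{eq:elliptic_pde} gives the needed control — and (ii) upgrading the space--time convergences of Theorem~\ref{t:unkd_limit} to convergences at the fixed $t$ (weak $L^{2}$ for the spatial gradients after integrating in $\tau$, and strong $L^{2}(\Oout)$ for $\unkd(t)$ itself via the time-equicontinuity estimate). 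Everything else — the splitting, the use of Corollary~\ref{co:add_expand}, and the vanishing of the bulk gradient term in the first piece — is routine, and the cancellation of the $F$--terms is exactly what encodes the physical statement that the strip mass feels only the exterior flux arriving at $\inter$.
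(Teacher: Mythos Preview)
Your argument is correct and follows essentially the same strategy as the paper's: split off the strip from the full interior integral, handle the bulk via Corollary~\ref{co:add_expand}, and identify the remainder through the time--integrated equation together with the elliptic problem \eqref{eq:elliptic_pde} for $w$. The only difference is presentational: where you introduce $\zeta_\delta$, apply Green's formula on each subdomain, and invoke continuity of the normal derivative across $\inter$, the paper works directly with the weak formulation on $\Oset$ written as $J_1+J_2=\int_\Oset F(t)\varphi\di x$ (with $J_i$ the contribution from $\Omega^i$) and simply rearranges to isolate the strip integral---your boundary--term bookkeeping is exactly that identity unpacked, and the pointwise--in--$t$ convergences you flag are used in the paper in the same way.
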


\begin{proof}
  Use as a testing function in \eqref{eq:weaksol_d_a} $\varphi\in \mathcal{C}^{1}(\overline{\Oset})$, after an approximation process as in the proof of Lemma~\ref{l:energy_dint}. We get for every $t>0$
  \begin{equation}
    \label{eq:add_limit_i}
    J_{1}
    +
    J_{2}
    =
    \int_{\Oset}
    F(t)
    \varphi
    \di x
    \,,
  \end{equation}
  where
  \begin{align*}
    J_{1}
    =
    \int_{\Oint}
    \unkd (t)
    \varphi
    \di x
    +
    \int_{0}^{t}
    \int_{\Oint}
    \delta
    \grad(\fpc\unkd)
    \grad\varphi
    \di x
    \di \tau
    =:
    J_{11}
    +
    J_{12}
    \,,
    \\
    J_{2}
    =
    \int_{\Oout}
    \unkd (t)
    \varphi
    \di x
    +
    \int_{0}^{t}
    \int_{\Oout}
    \grad(\fpc\unkd)
    \grad\varphi
    \di x
    \di \tau
    \,.
  \end{align*}
  Clearly
  \begin{equation}
    \label{eq:add_limit_ii}
    J_{2}
    \to
    \int_{\Oout}
    \unk (t)
    \varphi
    \di x
    +
    \int_{0}^{t}
    \int_{\Oout}
    \grad(\fpc\unk)
    \grad\varphi
    \di x
    \di \tau
    \,,
    \qquad
    \delta\to 0
    \,,
  \end{equation}
  for $\unk$ as in Theorem~\ref{t:unkd_limit} (on using also the last of \eqref{eq:unkd_out_strong}).

  As to $J_{1}$, clearly $J_{12}\to 0$ as $\delta\to 0$ by virtue of the weak convergence $\grad \funkd\wto 0$ in $L^{2}(\OintT)$ given in the proof of Theorem~\ref{t:unkd_limit}. In addition
  \begin{equation}
    \label{eq:add_expand_iii}
    J_{11}
    =
    \int_{\Oint(\delta^{q})}
    \unkd(t)
    \varphi
    \di x
    +
    \int_{\Oint\setminus\Oint(\delta^{q})}
    \unkd(t)
    \varphi
    \di x
    \,,
  \end{equation}
  and the last integral above converges according to \eqref{eq:add_expand_m}.

  Thus as $\delta\to 0$
  \begin{multline}
    \label{eq:add_limit_iv}
    \int_{\Oint(\delta^{q})}
    \unkd(t)
    \varphi
    \di x
    \to
    \int_{\Oset}
    F(t)
    \varphi
    \di x
    -
    \int_{\Oout}
    \unk (t)
    \varphi
    \di x
    \\
    -
    \int_{0}^{t}
    \int_{\Oout}
    \grad(\fpc\unk)
    \grad\varphi
    \di x
    \di \tau
    -
    \int_{\Oint}
    F(t)
    \varphi
    \di x
    \\
    =
    \int_{\Oout}
    F(t)
    \varphi
    \di x
    -
    \int_{\Oout}
    \unk (t)
    \varphi
    \di x
    -
    \int_{0}^{t}
    \int_{\Oout}
    \grad(\fpc\unk)
    \grad\varphi
    \di x
    \di \tau
    =:
    J_{3}
    \,.
  \end{multline}
  On the other hand, we take into account the weak formulation of \eqref{eq:elliptic_pde}, for $w$ as in \eqref{eq:elliptic_w}, where we may allow test functions $\varphi\in \mathcal{C}^{1}(\overline{\Oout})$ (not necessarily vanishing on $\inter$), since the regularity of $w$ implies $\grad w(t) \in L^{2}(\inter)$. Then, recalling \eqref{eq:elliptic_flux} we arrive at
  \begin{equation}
    \label{eq:add_limit_v}
    J_{3}
    =
    \int_{\Oset}
    \varphi
    \di\msr{\fpc\unk}{t}
    \,,
  \end{equation}
  whence the claim.
\end{proof}

From the results of this Section it follows 
immediately the following corollary.

\begin{corollary}
  \label{c:add_total}
  As $\delta \to 0$ the solution $\unkd$ satisfies for every $0<t<T$ and every $\varphi\in \mathcal{C}(\overline{\Oset})$
  \begin{equation}
    \label{eq:total_n}
    \int_{\Oset}
    \unkd(x,t)
    \varphi(x)
    \di x
    \to
    \int_{\Oset}
    \varphi(x)
    \di \finmass_{t}(x)
    \,,
  \end{equation}
  where
  \begin{equation}
    \label{eq:total_nn}
    \di\finmass_{t}
    :=
    \unk(x,t)
    \di x
    +
    \di \msr{bu}{t}
    \,,
  \end{equation}
  and $\unk$ is the limiting solution, defined in the whole $\OsetT$, introduced in Theorem~\ref{t:unkd_limit}.
  \end{corollary}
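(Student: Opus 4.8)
The plan is to assemble the three convergence facts already proved. First I would fix $0<q<1/4$ and, using $\abs{\inter}=0$, decompose for $\varphi\in\mathcal{C}^{1}(\overline{\Oset})$
\begin{multline*}
\int_{\Oset}\unkd(x,t)\varphi(x)\di x
=
\int_{\Oout}\unkd(t)\varphi\di x
+
\int_{\Oint\setminus\Oint(\delta^{q})}\unkd(t)\varphi\di x
\\
+
\int_{\Oint(\delta^{q})}\unkd(t)\varphi\di x
\,,
\end{multline*}
and then pass to the limit $\delta\to0$ in each piece. The last integral tends to $\int_{\Oset}\varphi\di\msr{bu}{t}$ directly by Theorem~\ref{t:add_limit} (recall that $\msr{\fpc\unk}{t}=\msr{bu}{t}$ in the notation of \eqref{eq:elliptic_flux}). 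For the middle one, Corollary~\ref{co:add_expand} gives $\int_{\Oint\setminus\Oint(\delta^{q})}(\unkd(t)-F(t))^{2}\di x\to0$, so by Cauchy--Schwarz and the boundedness of $\varphi$ the difference $\int_{\Oint\setminus\Oint(\delta^{q})}(\unkd(t)-F(t))\varphi\di x$ vanishes in the limit, while $\int_{\Oint\setminus\Oint(\delta^{q})}F(t)\varphi\di x\to\int_{\Oint}F(t)\varphi\di x$ since $\abs{\Oint(\delta^{q})}\le\gamma\delta^{q}\to0$ and $F(t)\in L^{2}(\Oint)$; hence the middle integral converges to $\int_{\Oint}F(t)\varphi\di x$. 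For the first integral I would reproduce the argument used for the term $J_{2}$ in the proof of Theorem~\ref{t:add_limit}: the strong convergence $\unkd\to\unk$ in $L^{2}(\OoutT)$ together with the weak convergence $\partial\unkd/\partial t\wto\partial\unk/\partial t$ in $L^{2}(\Oout\times(t_{0},T))$ from Theorem~\ref{t:unkd_limit} (whose underlying $\delta$--uniform $L^{2}$ bound is a consequence of \eqref{eq:energy_v}) makes the maps $t\mapsto\int_{\Oout}\unkd(t)\varphi\di x$ uniformly $\tfrac12$--H\"older on every $[t_{0},T]$; Arzel\`a--Ascoli and uniqueness of the limit then give $\int_{\Oout}\unkd(t)\varphi\di x\to\int_{\Oout}\unk(t)\varphi\di x$ for every fixed $t>0$.

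Summing the three limits, and using that the limiting $\unk$ equals $F$ on $\Oint$ by \eqref{eq:pde_1}--\eqref{eq:init_1} and \eqref{eq:add_F} together with $\abs{\inter}=0$, one arrives at
\begin{multline*}
\int_{\Oset}\unkd(t)\varphi\di x
\;\longrightarrow\;
\int_{\Oout}\unk(t)\varphi\di x
+
\int_{\Oint}F(t)\varphi\di x
+
\int_{\Oset}\varphi\di\msr{bu}{t}
\\
=
\int_{\Oset}\unk(t)\varphi\di x
+
\int_{\Oset}\varphi\di\msr{bu}{t}
=
\int_{\Oset}\varphi\di\finmass_{t}
\,,
\end{multline*}
which is \eqref{eq:total_n} for $\varphi\in\mathcal{C}^{1}(\overline{\Oset})$; choosing $\varphi\equiv1$ here provides a consistency check, since the total mass of $\finmass_{t}$ then equals the $\delta$--independent value $\int_{\Oset}\initd\di x+\int_{0}^{t}\int_{\Oset}f\di x\di\tau$ of \eqref{eq:mass_balance}. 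To reach an arbitrary $\varphi\in\mathcal{C}(\overline{\Oset})$ I would approximate it uniformly by functions in $\mathcal{C}^{1}(\overline{\Oset})$ and combine the $\delta$--uniform bound $\norma{\unkd(t)}{L^{1}(\Oset)}\le\gamma$ of Lemma~\ref{l:mass_con} with the finiteness of the total variation of $\finmass_{t}$ (which holds since $\unk(t)\in L^{1}(\Oset)$ and $\msr{bu}{t}$ is a bounded measure by \eqref{eq:elliptic_flux}), so that a routine density argument transfers the convergence. Finally, since the limit problems \eqref{eq:pde_2}--\eqref{eq:init_2} and \eqref{eq:pde_1}--\eqref{eq:init_1} are uniquely solvable, $\unk$ and hence $\finmass_{t}$ do not depend on the extracted subsequence, and the convergence holds along the whole family $\delta\to0$.

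I expect the only point calling for genuine care to be the convergence of the outer contribution $\int_{\Oout}\unkd(t)\varphi\di x$ at each individual $t$ rather than merely for a.e.\ $t$; this is exactly what the equicontinuity-in-time estimate built on \eqref{eq:energy_v} provides, while all the remaining steps are direct bookkeeping of Theorem~\ref{t:unkd_limit}, Corollary~\ref{co:add_expand}, and Theorem~\ref{t:add_limit}.
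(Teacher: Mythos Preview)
Your proof is correct and follows the same approach as the paper's, which simply states that the result follows at once from Theorem~\ref{t:add_limit}, the uniform $L^{1}$ bound of Lemma~\ref{l:mass_con}, and approximation of $\varphi$ by $\mathcal{C}^{1}$ functions. Your decomposition into $\Oout$, $\Oint\setminus\Oint(\delta^{q})$, and $\Oint(\delta^{q})$ makes explicit the computations already carried out inside the proof of Theorem~\ref{t:add_limit}, including the pointwise-in-$t$ convergence of $\int_{\Oout}\unkd(t)\varphi\di x$ that the paper justifies there by invoking the last convergence in \eqref{eq:unkd_out_strong}.
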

    \begin{proof}
      In fact this follows at once from Theorem~\ref{t:add_limit}, when we exploit the uniform $L^{1}$ bound of Lemma~\ref{l:mass_con} and a standard approximation procedure of $\varphi$ with $\mathcal{C}^{1}$ functions.
    \end{proof}
  
\section{Unfolding}
\label{s:unfold}
In this subsection we recall the definition and the 
main properties concerning the usual periodic-unfolding 
operator and the
unfolding operator for perforated domains, see, for instance, 
\cite{Amar:Andreucci:Gianni:Timofte:2017A,Amar:Gianni:2016A,
Cabarrubias:Donato:2016,
Cioranescu:Damlamian:Griso:2002,Cioranescu:Damlamian:Griso:2008,Cioranescu:Damlamian:Griso:Onofrei:2008}.

\begin{definition}
\label{d:fold010}
The time--depending unfolding operator $\mathcal{T}_\varepsilon$
of a 
function $w$ defined on $\Omega_T$
and 
Lebesgue measurable 
is given by
\begin{equation}
\label{fold020b}
\mathcal{T}_\varepsilon(w)(x,t,y)
=
\left\{
\begin{alignedat}{2}
&
w\Big(\varepsilon\Big[\frac{x}{\varepsilon}\Big]_\Y
     +\varepsilon y,t \Big)
,
&
\quad
&
(x,t,y)\in\hat\Omega_{\varepsilon,T}\times\Y;
\\
&
0,
&
\quad
&
\text{otherwise}
.
\end{alignedat}
\right.
\end{equation}
\end{definition}

Note that, by definition, it easily follows that
\begin{equation}
\label{fold080}
\mathcal{T}_\varepsilon(w_1w_2)
=
\mathcal{T}_\varepsilon(w_1)
\mathcal{T}_\varepsilon(w_2)\,.
\end{equation}

\begin{definition}
\label{d:fold000}
The space average operator $\mathcal{M}_\varepsilon$
of a Lebesgue integrable function $w$ defined on $\Omega_T$
is given by
\begin{equation}
\label{fold020}
\mathcal{M}_\varepsilon(w)(x,t)
=
\left\{
\begin{alignedat}{2}
&
\frac{1}{\eps^{N}}\int_{\Y_\eps(x)}w(\zeta,t)\di \zeta
,
&
\quad
&
(x,t)\in\hat\Omega_{\varepsilon,T};
\\
&
0,
&
\quad
&
\text{otherwise}
.
\end{alignedat}
\right.
\end{equation}
Moreover, the space oscillation operator
is defined as
\begin{equation}
\label{fold070}
\mathcal{Z}_\varepsilon(w)(x,t,y)
=
\mathcal{T}_\varepsilon(w)(x,t,y)
-
\mathcal{M}_\varepsilon(w)(x,t)
\;.
\end{equation}
\end{definition}

Notice that, by a simple change of variables, it easily follows that
\begin{equation}
\label{fold050}
\mathcal{M}_\varepsilon(w)(x,t)
=
\int_\Y
  \mathcal{T}_\varepsilon(w)(x,t,y)
\,\text{d}y\,
\,.
\end{equation}

For later use, we define the functional spaces
\begin{equation}\label{d:space}
\Hper(\Y)=\{\vi\in H^1_\textup{loc}(\mathbb{R}^n):\ \hbox{$\vi$ is $\Y$-periodic}\}\,,
\end{equation}
and
\begin{equation}\label{d:space1}
K_\hole=\{\vi\in L^{2^*}(\mathbb{R}^n):\ \nabla \vi\in L^2(\R^n),\ \hbox{$\vi$ is constant on $\hole$}\}\,.
\end{equation}

We recall the following results.

\begin{proposition}
\label{fold090}
For $\phi\in L^2(\Y;{\mathcal C}(\overline\Omega_T))$
or
$\phi\in L^2(\Omega_T;{\mathcal C}(\overline{\Y}))$,
denote again by $\phi$ its
extension by $\Y$--periodicity
to $\Omega_T\times\mathbb{R}^{n}$
and set
$\phi_\varepsilon(x,t)=\phi(x,t,\varepsilon^{-1}x)$.
Then,
$\mathcal{T}_\varepsilon(\phi_\varepsilon) \to \phi$
strongly in $L^2(\Omega_T\times\Y)$.
\end{proposition}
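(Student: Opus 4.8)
The plan is to reduce the general claim to the case of a smooth product function and then use the approximation built into the two function spaces. First I would treat the case $\phi\in\mathcal{C}(\overline{\Y}\times\overline{\Omega_T})$, i.e.\ $\phi$ continuous jointly in all variables (which is dense in both $L^2(\Y;\mathcal{C}(\overline{\Omega_T}))$ and $L^2(\Omega_T;\mathcal{C}(\overline{\Y}))$ for the relevant norm). For such $\phi$, unwinding the definition \eqref{fold020b} gives, for $(x,t)\in\hat\Omega_{\varepsilon,T}$,
\begin{equation*}
\mathcal{T}_\varepsilon(\phi_\varepsilon)(x,t,y)
=
\phi_\varepsilon\Big(\varepsilon\Big[\tfrac{x}{\varepsilon}\Big]_\Y+\varepsilon y,\,t\Big)
=
\phi\Big(\varepsilon\Big[\tfrac{x}{\varepsilon}\Big]_\Y+\varepsilon y,\,t,\,\big[\tfrac{x}{\varepsilon}\big]_\Y+y\Big)
=
\phi\Big(\varepsilon\Big[\tfrac{x}{\varepsilon}\Big]_\Y+\varepsilon y,\,t,\,y\Big),
\end{equation*}
using $\Y$-periodicity in the last variable to replace $[x/\varepsilon]_\Y+y$ by $y$. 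Since $\varepsilon[x/\varepsilon]_\Y+\varepsilon y$ differs from $x$ by a vector of length $\le \varepsilon\sqrt{n}$, uniform continuity of $\phi$ on the compact set $\overline{\Omega_T}\times\overline{\Y}$ shows that $\mathcal{T}_\varepsilon(\phi_\varepsilon)(x,t,y)\to\phi(x,t,y)$ uniformly on $\hat\Omega_{\varepsilon,T}\times\Y$; together with $|\Omega\setminus\hat\Omega_\varepsilon|\to0$ (so that the region where $\mathcal{T}_\varepsilon(\phi_\varepsilon)=0$ by the second branch of \eqref{fold020b} has vanishing measure) this yields strong $L^2(\Omega_T\times\Y)$ convergence in the continuous case.

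Next I would pass to general $\phi$ by density and a uniform bound. The key estimate is that $\mathcal{T}_\varepsilon$ is (essentially) an $L^2$-isometry: by the change of variables underlying \eqref{fold050}, for any $w\in L^2(\Omega_T)$ one has $\norm{\mathcal{T}_\varepsilon(w)}_{L^2(\Omega_T\times\Y)}\le\norm{w}_{L^2(\Omega_T)}$ (with equality up to the boundary-layer defect $\Lambda_\varepsilon$). Applying this to $w=\phi_\varepsilon$ and noting that $\norm{\phi_\varepsilon}_{L^2(\Omega_T)}\le\norm{\phi}_{L^2(\Omega_T;\mathcal{C}(\overline{\Y}))}$ (in the second setting), respectively $\norm{\phi_\varepsilon}_{L^2(\Omega_T)}^2=\int_{\Omega_T}\abs{\phi(x,t,x/\varepsilon)}^2\to\int_{\Omega_T}\int_{\Y}\abs{\phi}^2$ by the mean-value property of periodic functions (in the first setting), we get a uniform-in-$\varepsilon$ bound on $\norm{\mathcal{T}_\varepsilon(\phi_\varepsilon)}_{L^2}$. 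Given $\phi$ in either of the two spaces, pick $\phi^{(k)}\in\mathcal{C}(\overline{\Y}\times\overline{\Omega_T})$ with $\phi^{(k)}\to\phi$ in the space norm; then
\begin{equation*}
\Norm{\mathcal{T}_\varepsilon(\phi_\varepsilon)-\phi}_{L^2(\Omega_T\times\Y)}
\le
\Norm{\mathcal{T}_\varepsilon\big((\phi-\phi^{(k)})_\varepsilon\big)}_{L^2}
+
\Norm{\mathcal{T}_\varepsilon(\phi^{(k)}_\varepsilon)-\phi^{(k)}}_{L^2}
+
\Norm{\phi^{(k)}-\phi}_{L^2(\Omega_T\times\Y)},
\end{equation*}
and the first and third terms are $\le\gamma\norm{\phi-\phi^{(k)}}$ uniformly in $\varepsilon$, while the middle term $\to0$ as $\varepsilon\to0$ by the continuous case; sending first $\varepsilon\to0$ and then $k\to\infty$ finishes the proof.

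The main obstacle I anticipate is the \emph{measurability and well-definedness of $\phi_\varepsilon$} when $\phi$ is only in $L^2$ in one group of variables and continuous in the other: one must check that $x\mapsto\phi(x,t,x/\varepsilon)$ is a genuine element of $L^2(\Omega_T)$ (Carath\'eodory-type argument), and that the composition with the piecewise-affine map $x\mapsto\varepsilon[x/\varepsilon]_\Y+\varepsilon y$ inside $\mathcal{T}_\varepsilon$ preserves measurability — this is where the two alternative hypotheses on $\phi$ are each used. The rest is the standard three-$\varepsilon$ / density argument once the continuous case and the uniform bound are in hand; no single computation is hard, but the bookkeeping of which norm controls $\phi_\varepsilon$ in each of the two cases needs care.
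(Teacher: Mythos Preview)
The paper does not actually prove Proposition~\ref{fold090}: it is listed in Section~\ref{s:unfold} among results that are merely \emph{recalled} from the unfolding literature (the references to \cite{Cioranescu:Damlamian:Griso:2002,Cioranescu:Damlamian:Griso:2008,Cioranescu:Damlamian:Griso:Onofrei:2008}), with no argument supplied. So there is no ``paper's own proof'' to compare against.

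Your proposal is essentially the standard proof one finds in those references, and the strategy is sound: the explicit identity $\mathcal{T}_\varepsilon(\phi_\varepsilon)(x,t,y)=\phi(\varepsilon[x/\varepsilon]_\Y+\varepsilon y,\,t,\,y)$ on $\hat\Omega_{\varepsilon,T}\times\Y$ (via $\Y$-periodicity) is the heart of the matter, and from it uniform convergence follows at once for jointly continuous $\phi$. The density step is also the right idea. One small point to tighten: in the case $\phi\in L^2(\Y;\mathcal{C}(\overline{\Omega_T}))$ you invoke the mean-value property to say $\norm{\phi_\varepsilon}_{L^2(\Omega_T)}^2$ \emph{converges}, but what the three-term estimate actually needs is a bound of the form $\norm{\mathcal{T}_\varepsilon(\psi_\varepsilon)}_{L^2}\le C\norm{\psi}$ valid \emph{uniformly} in $\varepsilon$ for every $\psi$ in the space. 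This is indeed true---bound $|\psi(x,t,y)|$ by $g(y):=\norm{\psi(\cdot,\cdot,y)}_{\mathcal{C}(\overline{\Omega_T})}\in L^2(\Y)$ and use that $\int_\Omega g(x/\varepsilon)^2\,\mathrm{d}x$ is bounded uniformly in $\varepsilon$ by the standard estimate for periodic $L^1$ functions---but you should state it as a uniform bound rather than a limit. With that adjustment the argument is complete and matches the literature.
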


\begin{proposition}
\label{p:fold2}
Let $w_\eps\to w$ strongly in $L^2(\Om_T)$. Then
\begin{equation}\label{eq:m1bis}
\mathcal{T}_\varepsilon(w_\eps)\to w\,,\qquad  \hbox{strongly in $L^2(\Om_T\times\Y)$.}
\end{equation}
\end{proposition}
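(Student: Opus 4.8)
The plan is to combine the uniform-in-$\varepsilon$ $L^{2}$ contraction property of the unfolding operator with a density argument based on Proposition~\ref{fold090}. First I would record the elementary identity that follows from the definition \eqref{fold020b} by a change of variables on each cell $\varepsilon(\xi+\Y)$, $\xi\in\Xi_\varepsilon$ (on which $\mathcal{T}_\varepsilon(v)(x,t,\cdot)$ is constant in $x$):
\[
\norma{\mathcal{T}_\varepsilon(v)}{L^{2}(\Om_T\times\Y)}^{2}
=
\norma{v}{L^{2}(\hat\Omega_{\varepsilon,T})}^{2}
\le
\norma{v}{L^{2}(\Om_T)}^{2}
\qquad\text{for all }v\in L^{2}(\Om_T).
\]
Thus $\mathcal{T}_\varepsilon$ is a linear operator from $L^{2}(\Om_T)$ into $L^{2}(\Om_T\times\Y)$ of norm at most $1$, uniformly in $\varepsilon$.

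Next I would split
\[
\mathcal{T}_\varepsilon(w_\varepsilon)-w
=
\mathcal{T}_\varepsilon(w_\varepsilon-w)+\bigl(\mathcal{T}_\varepsilon(w)-w\bigr).
\]
By the contraction property the first term has $L^{2}(\Om_T\times\Y)$ norm bounded by $\norma{w_\varepsilon-w}{L^{2}(\Om_T)}$, which tends to $0$ by hypothesis, so the whole problem reduces to proving that $\mathcal{T}_\varepsilon(w)-w\to 0$ strongly in $L^{2}(\Om_T\times\Y)$ for the single fixed function $w$. Here $w$ (and $\phi$ below) are extended to $\Om_T\times\Y$ by constancy in $y$. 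For the reduced claim I would argue by density: for $\phi\in\mathcal{C}(\overline{\Om_T})$, regarded as independent of $y$, one has $\phi\in L^{2}(\Y;\mathcal{C}(\overline{\Om_T}))$ and $\phi_\varepsilon\equiv\phi$, so Proposition~\ref{fold090} gives $\mathcal{T}_\varepsilon(\phi)\to\phi$ strongly in $L^{2}(\Om_T\times\Y)$. Given $\theta>0$, choose such a $\phi$ with $\norma{w-\phi}{L^{2}(\Om_T)}<\theta$; since a function independent of $y$ has equal $L^{2}(\Om_T\times\Y)$ and $L^{2}(\Om_T)$ norms (as $\abs{\Y}=1$), the contraction estimate and the triangle inequality yield
\begin{multline*}
\norma{\mathcal{T}_\varepsilon(w)-w}{L^{2}(\Om_T\times\Y)}
\le
\norma{\mathcal{T}_\varepsilon(w-\phi)}{L^{2}(\Om_T\times\Y)}
+\norma{\mathcal{T}_\varepsilon(\phi)-\phi}{L^{2}(\Om_T\times\Y)}
+\norma{\phi-w}{L^{2}(\Om_T)}
\\
\le 2\theta+\norma{\mathcal{T}_\varepsilon(\phi)-\phi}{L^{2}(\Om_T\times\Y)}.
\end{multline*}
Letting $\varepsilon\to0$ and then $\theta\to0$ gives $\mathcal{T}_\varepsilon(w)-w\to0$, hence \eqref{eq:m1bis}.

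I expect no real obstacle here: the argument is entirely soft. The only point needing a little care is the first step, namely the identification of $\mathcal{T}_\varepsilon$ as an isometry onto $L^{2}(\hat\Omega_{\varepsilon,T}\times\Y)$ together with the verification that the boundary-corrector layer $\Lambda_\varepsilon=\Om\setminus\hat\Omega_\varepsilon$ does no harm. The latter is harmless, since in the contraction step one simply discards the nonnegative integral over $\Lambda_{\varepsilon,T}$, and in Proposition~\ref{fold090} the conclusion is already phrased as convergence on all of $\Om_T\times\Y$, zero extension included.
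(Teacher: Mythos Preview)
Your argument is correct and is precisely the standard proof of this unfolding compactness result. Note, however, that the paper does not actually prove Proposition~\ref{p:fold2}: it is listed among the results that are simply \emph{recalled} from the unfolding literature (see the references to \cite{Cioranescu:Damlamian:Griso:2002,Cioranescu:Damlamian:Griso:2008}), so there is no proof in the paper to compare against. Your contraction-plus-density approach is exactly the one given in those references.
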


\begin{proposition}
\label{p:fold1}
Let $w_\eps\wto w$ weakly in $L^2(0,T;H^1(\Om))$. Then there exists a function $\hat w\in L^2(\Om_T;\Hper(\Y))$
such that
\begin{equation}\label{eq:m1}
\begin{aligned}
& \mathcal{T}_\varepsilon(w_\eps)\wto w\,,\qquad & \hbox{weakly in $L^2(\Om_T;H^1(\Y))$;}
\\
& \mathcal{M}_\varepsilon(w_\eps)\wto w\,,\qquad & \hbox{weakly in $L^2(\Om_T)$;}
\\
&
\mathcal{T}_\varepsilon(\nabla w_\eps)\wto \nabla w+\nabla_y \hat w\,,\qquad & \hbox{weakly in $L^2(\Om_T\times\Y)$.}
\end{aligned}
\end{equation}
\end{proposition}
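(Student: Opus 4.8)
The plan is to derive the three convergences from the elementary estimates for the unfolding operator together with weak compactness, and then to identify the limits by means of the averaging operator $\mathcal{M}_\varepsilon$ and of the rescaled oscillation $\mathcal{Z}_\varepsilon(w_\eps)/\varepsilon$. Recall the standard properties (see \cite{Cioranescu:Damlamian:Griso:2008,Cioranescu:Damlamian:Griso:Onofrei:2008}) $\norma{\mathcal{T}_\varepsilon(u)}{L^2(\Om_T\times\Y)}\le\norma{u}{L^2(\Om_T)}$ and, for $u\in L^2(0,T;H^1(\Om))$, $\nabla_y\mathcal{T}_\varepsilon(u)=\varepsilon\,\mathcal{T}_\varepsilon(\nabla u)$. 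Since $w_\eps$ is bounded in $L^2(0,T;H^1(\Om))$, it follows that $\mathcal{T}_\varepsilon(w_\eps)$ is bounded in $L^2(\Om_T;H^1(\Y))$, that $\mathcal{T}_\varepsilon(\nabla w_\eps)$ is bounded in $L^2(\Om_T\times\Y)$, and that $\nabla_y\mathcal{T}_\varepsilon(w_\eps)=\varepsilon\,\mathcal{T}_\varepsilon(\nabla w_\eps)\to 0$ strongly in $L^2(\Om_T\times\Y)$. Hence, along a subsequence, $\mathcal{T}_\varepsilon(w_\eps)\wto W$ in $L^2(\Om_T;H^1(\Y))$ with $\nabla_y W=0$, so that $W=W(x,t)$, and $\mathcal{T}_\varepsilon(\nabla w_\eps)\wto\xi$ in $L^2(\Om_T\times\Y)$.

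To identify $W$, integrate the first weak convergence over $\Y$ and use \eqref{fold050}: $\mathcal{M}_\varepsilon(w_\eps)=\int_\Y\mathcal{T}_\varepsilon(w_\eps)\di y\wto W$. On the other hand $\mathcal{M}_\varepsilon$ is a linear contraction on $L^2$ which is self-adjoint on $\hat\Om_\eps$ and satisfies $\mathcal{M}_\varepsilon(\psi)\to\psi$ strongly in $L^2(\Om_T)$ for every smooth $\psi$; since $\abs{\Om\setminus\hat\Om_\eps}\to 0$, a duality and density argument gives $\mathcal{M}_\varepsilon(w_\eps)\wto w$. Therefore $W=w$, and, the limit being independent of the extracted subsequence, the first two convergences in \eqref{eq:m1} hold for the whole sequence. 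The same reasoning applied to $\nabla w_\eps$, which converges weakly to $\nabla w$ in $L^2(\Om_T)$, gives $\mathcal{M}_\varepsilon(\nabla w_\eps)\wto\nabla w$; comparing with $\mathcal{M}_\varepsilon(\nabla w_\eps)=\int_\Y\mathcal{T}_\varepsilon(\nabla w_\eps)\di y\wto\int_\Y\xi\di y$ yields $\int_\Y\xi\di y=\nabla w$.

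For the corrector, set $Z_\varepsilon:=\mathcal{Z}_\varepsilon(w_\eps)/\varepsilon$. By \eqref{fold070} and \eqref{fold050} it has zero $\Y$-average, while $\nabla_y Z_\varepsilon=\varepsilon^{-1}\nabla_y\mathcal{T}_\varepsilon(w_\eps)=\mathcal{T}_\varepsilon(\nabla w_\eps)$ is bounded in $L^2(\Om_T\times\Y)$; the Poincar\'{e}--Wirtinger inequality on $\Y$ then shows that $Z_\varepsilon$ is bounded in $L^2(\Om_T;H^1(\Y))$. Passing to a further subsequence, $Z_\varepsilon\wto Z$ with $\int_\Y Z\di y=0$ and $\nabla_y Z=\xi$. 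Put $\hat w(x,t,y):=Z(x,t,y)-y\scpr\nabla w(x,t)$, so that $\nabla_y\hat w=\xi-\nabla w$ and $\int_\Y\hat w\di y=0$; representing $\hat w(x,t,\cdot)$ as the zero-mean solution of a periodic cell problem with right-hand side $\xi(x,t,\cdot)-\nabla w(x,t)$ shows that $\hat w\in L^2(\Om_T;H^1(\Y))$.

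It remains to check that $\hat w(x,t,\cdot)$ is $\Y$-periodic, and this is the main obstacle. Periodicity is forced by the exact matching of $\mathcal{T}_\varepsilon(w_\eps)$ across adjacent interior cell faces: when the cells of $x$ and $x+\varepsilon e_j$ are both contained in $\hat\Om_\eps$, one has $\mathcal{T}_\varepsilon(w_\eps)(x,t,\cdot)|_{y_j=1/2}=\mathcal{T}_\varepsilon(w_\eps)(x+\varepsilon e_j,t,\cdot)|_{y_j=-1/2}$, whence $Z_\varepsilon(x,t,\cdot)|_{y_j=1/2}-Z_\varepsilon(x+\varepsilon e_j,t,\cdot)|_{y_j=-1/2}=\varepsilon^{-1}(\mathcal{M}_\varepsilon(w_\eps)(x+\varepsilon e_j,t)-\mathcal{M}_\varepsilon(w_\eps)(x,t))$. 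Passing to the limit in this identity — the delicate step, since the right-hand side is a difference quotient with step $\varepsilon$ of the merely weakly convergent sequence $\mathcal{M}_\varepsilon(w_\eps)$ — produces the jump relation $Z(x,t,\cdot)|_{y_j=1/2}-Z(x,t,\cdot)|_{y_j=-1/2}=\partial_{x_j}w(x,t)$, i.e. $\hat w(x,t,\cdot)|_{y_j=1/2}=\hat w(x,t,\cdot)|_{y_j=-1/2}$; this limit passage at the level of the cell-face traces is carried out as in \cite{Cioranescu:Damlamian:Griso:2008,Cioranescu:Damlamian:Griso:Onofrei:2008}. Then $\hat w\in L^2(\Om_T;\Hper(\Y))$ and, along the extracted subsequence, $\mathcal{T}_\varepsilon(\nabla w_\eps)\wto\xi=\nabla w+\nabla_y\hat w$, which is the third convergence in \eqref{eq:m1}. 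Apart from this last step, everything is routine manipulation of the contraction and scaling identities for $\mathcal{T}_\varepsilon$ and $\mathcal{M}_\varepsilon$.
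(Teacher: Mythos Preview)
The paper does not prove this proposition: Section~\ref{s:unfold} merely \emph{recalls} it as a known result from the unfolding literature, with references to \cite{Cioranescu:Damlamian:Griso:2002,Cioranescu:Damlamian:Griso:2008,Cioranescu:Damlamian:Griso:Onofrei:2008} and others. Your sketch reproduces precisely the standard argument from those sources --- weak compactness via the contraction property of $\mathcal{T}_\varepsilon$, identification of the $y$-independent limit through $\mathcal{M}_\varepsilon$, and construction of the corrector via the rescaled oscillation $\mathcal{Z}_\varepsilon(w_\eps)/\varepsilon$ together with the trace-matching argument for periodicity --- so it is fully consistent with what the paper invokes.

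One small redundancy: the sentence about ``representing $\hat w$ as the zero-mean solution of a periodic cell problem'' to obtain $H^1(\Y)$ regularity is unnecessary, since $\hat w=Z-y\scpr\nabla w$ is already manifestly in $L^2(\Om_T;H^1(\Y))$ once $Z$ is. Also note that the third convergence in \eqref{eq:m1} is, strictly speaking, only along a subsequence (the corrector $\hat w$ is not unique), which your final paragraph correctly reflects even though the proposition as stated in the paper suppresses this.
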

Due to the presence of small holes, 
we are led to introduce another unfolding operator, depending also on 
the size of the small holes.
It is denoted by $\unfoldee$ and defined as
\begin{equation}
\label{eq:m10}
\begin{aligned}
\unfoldee(w)(x,t,z)
& =
{\left\{
\begin{alignedat}{2}
&
\unfolde(w)(x,t,\eta z)
,
&
\quad
&
(x,t,z)\in\hat\Omega_{\varepsilon,T}\times\frac{1}{\eta}\Y;
\\
&
0,
&
\quad
&
\text{otherwise};
\end{alignedat}
\right.}
\\
&\\
& =
{\left\{\begin{alignedat}{2}
& w\Big(\varepsilon\Big[\frac{x}{\varepsilon}\Big]_\Y
     +\varepsilon \eta z,t \Big),
&
\quad
&
(x,t,z)\in\hat\Omega_{\varepsilon,T}\times\frac{1}{\eta}\Y;
\\
&
0,
&
\quad
&
\text{otherwise}.
\end{alignedat}
\right.}
\end{aligned}
\end{equation}
The operator $\unfoldee$ satisfies property \eqref{fold080}, too; moreover, for every $w\in L^1(\Om_T)$,
we have
\begin{alignat}2
\label{eq:m11}
& \int_{\Om_T\times\R^n} |\unfoldee(w)|\di x   \di t\di z\leq \frac{1}{\eta^n}\int_{\Om_T} |w|\di x\di t,
\\
\label{eq:m12}
& \left| \int_{\Om_T} w\di x\di t-\eta^n\int_{\Om_T\times\R^n}\unfoldee(w)\di x\di t\di z\right|\leq\int_{\Lambda_{\eps,T}}|w|\di x\di t.
\end{alignat}
For $w\in L^2(0,T;H^1(\Om))$, we have
\begin{alignat}2
\label{eq:m13}
& \unfoldee (\nabla w) = \frac{1}{\eps\eta}\nabla_z(\unfoldee(w)),\quad \hbox{in $\Om_T\times \frac{1}{\eta}\Y$;}
\\
\label{eq:m14}
& \Vert \nabla_z(\unfoldee(w))\Vert^2_{L^2(\Om_T\times\frac{1}{\eta}\Y)}\leq \frac{\eps^2}{\eta^{n-2}}\Vert \nabla w\Vert^2_{L^2(\Om_T)};
\\
\label{eq:m15}
& \Vert \unfoldee(w-\mathcal{M}_\eps(w))\Vert^2_{L^2(\Om_T;L^{2^*}(\R^n)))}\leq C\frac{\eps^2}{\eta^{n-2}}\Vert \nabla w\Vert^2_{L^2(\Om_T)};
\\
\label{eq:m16}
& \Vert \unfoldee(w)\Vert^2_{L^2(\Om_T\times\omega)}\leq \frac{2C\eps^2}{\eta^{n-2}}|\omega|^{2/n}\Vert \nabla w\Vert^2_{L^2(\Om_T)}
+2|\omega|\,\Vert w\Vert^2_{L^2(\Om_T)}.
\end{alignat}
Here,
$\omega\subset\R^n$ is a bounded open set, $C$ is the Sobolev-Poincar\'{e}-Wirtinger constant for $H^1(\Y)$,
and properties \eqref{eq:m14}--\eqref{eq:m16} hold for $n\geq 3$.
\medskip

Finally, we recall the following compactness result.

\begin{proposition}\label{p:p1}
Let $n\geq 3$ and $\{w_{\eps}\}\subset L^2(0,T;H^1(\Om))$ be a uniformly  bounded sequence.
Then, up to a subsequence, there exists $W\in L^2(\Om_T;L^{2^*}(\R^n))$, with $\nabla_z W\in L^2(\Om_T\times\R^n)$, such that
\begin{alignat}2
\label{eq:m20}
& \!\!\!\!\!\!\frac{\eta^{n/2-1}}{\eps}(\unfoldee(w_{\eps})\!-\!\mathcal{M}_\eps(w_{\eps})\chi_{\frac{1}{\eta}\Y})\!\wto\! W\
&\hbox{weakly in $L^2(\Om_T;L^{2^*}\!\!(\R^n))$;}
\\
\label{eq:m17}
& \!\!\!\!\!\!\frac{\eta^{n/2-1}}{\eps}\nabla_z(\unfoldee(w_{\eps}))\chi_{\frac{1}{\eta}\Y}\wto \nabla_z W\
&\hbox{weakly in $L^2(\Om_T\times\R^n)$.}
\end{alignat}
Moreover, if
$$
\limsup_{\eps\to 0}\frac{\eta^{n/2-1}}{\eps}<+\infty,
$$
one can choose the subsequence above and some $V\in L^2(\Om_T;L^2_{loc}(\R^n))$ such that
\begin{equation}\label{eq:m18}
\frac{\eta^{n/2-1}}{\eps}\unfoldee(w_{\eps})\wto V\quad
\hbox{weakly in $L^2(\Om_T;L^2_{loc}(\R^n))$.}
\end{equation}
\end{proposition}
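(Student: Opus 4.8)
The plan is to reduce the statement to the estimates \eqref{eq:m14}--\eqref{eq:m16}, rescaled by the factor $\theta_\eps:=\eta^{n/2-1}/\eps$, which is precisely the reciprocal of $(\eps^{2}/\eta^{n-2})^{1/2}$. First I would put $W_\eps:=\theta_\eps\,\unfoldee\big(w_\eps-\mathcal{M}_\eps(w_\eps)\big)$. Since $\mathcal{M}_\eps(w_\eps)$ is constant on each cell $\Y_\eps(x)$ it is left unchanged by $\unfoldee$ wherever the latter does not vanish, so that $W_\eps=\theta_\eps\big(\unfoldee(w_\eps)-\mathcal{M}_\eps(w_\eps)\chi_{\frac1\eta\Y}\big)$, while $\nabla_z\mathcal{M}_\eps(w_\eps)=0$ gives $\nabla_z W_\eps=\theta_\eps\,\nabla_z(\unfoldee(w_\eps))$ on $\Om_T\times\frac1\eta\Y$. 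Multiplying \eqref{eq:m15} and \eqref{eq:m14} by $\theta_\eps^{2}=\eta^{n-2}/\eps^{2}$ and using that $\{w_\eps\}$, hence $\{\nabla w_\eps\}$, is bounded in $L^{2}(\Om_T)$, I obtain that $W_\eps$ is bounded in $L^{2}(\Om_T;L^{2^*}(\R^n))$ and that $\theta_\eps\,\nabla_z(\unfoldee(w_\eps))\chi_{\frac1\eta\Y}$ is bounded in $L^{2}(\Om_T\times\R^n)$, uniformly in $\eps$.

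Since $n\ge3$ makes $2^*>1$, the Bochner space $L^{2}(\Om_T;L^{2^*}(\R^n))$ is reflexive, and together with the Hilbert space $L^{2}(\Om_T\times\R^n)$ this lets me extract a common subsequence along which $W_\eps\wto W$ in $L^{2}(\Om_T;L^{2^*}(\R^n))$ and $\theta_\eps\,\nabla_z(\unfoldee(w_\eps))\chi_{\frac1\eta\Y}\wto G$ in $L^{2}(\Om_T\times\R^n)$, for suitable $W$ and $G$ in those spaces. This is already \eqref{eq:m20}, and \eqref{eq:m17} follows once I prove $G=\nabla_z W$. To that end I would take $\psi\in\mathcal{C}^{\infty}_{c}(\Om_T\times\R^n)$; for $\eps$ small its $z$-support lies in $\frac1\eta\Y$, so integrating by parts in $z$ over $\frac1\eta\Y$ produces no boundary term and, using $\nabla_z\mathcal{M}_\eps(w_\eps)=0$, gives $\int W_\eps\,\partial_{z_i}\psi=-\int\theta_\eps\,\nabla_{z_i}(\unfoldee(w_\eps))\chi_{\frac1\eta\Y}\,\psi$; passing to the limit yields $\int W\,\partial_{z_i}\psi=-\int G_i\,\psi$, i.e.\ $\nabla_z W=G\in L^{2}(\Om_T\times\R^n)$.

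For the last assertion I would assume $\limsup_{\eps\to0}\theta_\eps<+\infty$, fix a bounded open set $\omega\subset\R^n$, and multiply \eqref{eq:m16} by $\theta_\eps^{2}$: the right-hand side becomes $2C|\omega|^{2/n}\Vert\nabla w_\eps\Vert_{L^{2}(\Om_T)}^{2}+2|\omega|\,\theta_\eps^{2}\Vert w_\eps\Vert_{L^{2}(\Om_T)}^{2}$, which is bounded uniformly in $\eps$. Hence $\theta_\eps\,\unfoldee(w_\eps)$ is bounded in $L^{2}(\Om_T\times\omega)$ for every bounded open $\omega$; choosing an exhaustion $\omega_1\subset\omega_2\subset\cdots$ of $\R^n$ by bounded open sets and diagonalizing along a further subsequence of the one above, I get a function $V\in L^{2}(\Om_T;L^{2}_{loc}(\R^n))$ with $\theta_\eps\,\unfoldee(w_\eps)\wto V$ weakly in $L^{2}(\Om_T\times\omega)$ for each such $\omega$, which is \eqref{eq:m18}.

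I expect the identification $G=\nabla_z W$ to be the main obstacle. The functions $W_\eps$ are supported, in the variable $z$, in the $\eps$-dependent set $\frac1\eta\Y$, and on its boundary they carry a trace which is generically nonzero and amplified by the large factor $\theta_\eps$; hence the distributional $z$-gradient of $W_\eps$ (and of $W$) on all of $\R^n$ may a priori include a singular boundary contribution, so the identity $G=\nabla_z W$ is to be read in $\mathcal{D}'(\Om_T\times\frac1\eta\Y)$, which coincides with $\mathcal{D}'(\Om_T\times\R^n)$ in the regime $\eta\to0$ relevant to the applications. Everything else reduces to the above rescaling of \eqref{eq:m14}--\eqref{eq:m16} together with standard weak-compactness arguments.
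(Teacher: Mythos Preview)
The paper does not actually prove this proposition: it is introduced in Section~\ref{s:unfold} with the words ``we recall the following compactness result'' and stated without proof, as a known fact from \cite{Cioranescu:Damlamian:Griso:Onofrei:2008}. There is thus no proof in the paper to compare against.

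That said, your argument is correct and is essentially the standard one from the original reference. The key observation --- that multiplying \eqref{eq:m14}--\eqref{eq:m16} by $\theta_\eps^{2}=\eta^{n-2}/\eps^{2}$ cancels the degenerating factor $\eps^{2}/\eta^{n-2}$ and produces uniform bounds --- is exactly the right one, and the subsequent weak-compactness extraction and diagonalization are routine. Your identification $G=\nabla_z W$ via test functions $\psi\in \mathcal{C}^\infty_c(\Om_T\times\R^n)$ is also correct, and you are right to isolate it as the only delicate point: the argument tacitly uses $\eta\to0$ so that any fixed compact set in the $z$-variable is eventually contained in $\frac{1}{\eta}\Y$, killing the boundary contribution. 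This hypothesis is indeed implicit in the proposition (and explicit in the source), and holds in every instance where the paper invokes Proposition~\ref{p:p1}.
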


\section{Homogenization of the degenerate problem}
\label{ss:homog_deg}

In this section we assume $n\ge3$.
According to the results of Section~\ref{s:fp_nondeg},  we are 
interested here in homogenizing the following problem; in the 
outer domain we have
\begin{alignat}{2}
  \label{eq:pde_2_eps}
  \pder{\unk_\eps}{t}
  -
  \Lapl(\fpc_\eps\unk_\eps)
  &=
  f
  \,,
  &\qquad&
  \text{in $\Oout_{\eps,T}$;}
  \\
  \label{eq:bdr_2_eps}
  \pder{(\fpc_\eps\unk_\eps)}{\normal}
  &=
  0
  \,,
  &\qquad&
  \text{on $\bdr{\Oset}\times(0,T)$;}
  \\
  \label{eq:jump_eps}
  \fpc_\eps\unk_\eps
  &=
  0
  \,,
  &\qquad&
  \text{on $\MembT$;}
  \\
  \label{eq:init_2_eps}
  \unk_\eps(x,0)
  &=
  \initd(x)
  \,,
  &\qquad&
  \text{in $\Oout_\eps$.}
\end{alignat}
The problem in the interior domain is
\begin{alignat}{2}
  \label{eq:pde_1_eps}
  \pder{\unk_\eps}{t}
  &=
  f
  \,,
  &\qquad&
  \text{in $\Oint_{\eps,T}$;}
  \\
  \label{eq:init_1_eps}
  \unk_\eps(x,0)
  &=
  \initd(x)
  \,,
  &\qquad&
  \text{in $\Oint_\eps$.}
\end{alignat}
Here, $\fpc_\eps(x)=\fpc(x,x/\eps)$ in $\Om$,
where $\fpc$ is the Carath\'{e}odory function introduced in Section \ref{s:problem}.
Clearly, \eqref{eq:pde_1_eps}, \eqref{eq:init_1_eps} lead to
\begin{equation}
  \label{eq:F_1_eps}
  \unk_\eps(x,t)
  =
  F(x,t)
  \,,
  \qquad
  (x,t)\in \Oint_{\eps,T}
  \,,
\end{equation}
for the $F$ defined in \eqref{eq:add_F}.

If we set $v_\eps=\fpc_\eps u_\eps$ in $\Oout_{\eps,T}$, we can rewrite  problem \eqref{eq:pde_2_eps}--\eqref{eq:init_2_eps} as
\begin{alignat}{2}
  \label{eq:m5_eps}
  \frac{1}{\fpc_\eps}\pder{v_\eps}{t}
  -
  \Lapl v_\eps
  &=
  f
  \,,
  &\qquad&
  \text{in $\Oout_{\eps,T}$;}
  \\
  \label{eq:m6_eps}
  \pder{v_\eps}{\normal}
  &=
  0
  \,,
  &\qquad&
  \text{on $\bdr{\Oset}\times(0,T)$;}
  \\
  \label{eq:m7_eps}
  v_\eps
  &=
  0
  \,,
  &\qquad&
  \text{on $\MembT$;}
  \\
  \label{eq:m8_eps}
  v_\eps(x,0)
  &=
  \overline v_{\eps}(x)
  \,,
  &\qquad&
  \text{in $\Oout_\eps$,}
\end{alignat}
where $\overline v_{\eps}=\fpc_\eps\initd\in L^2(\Om)$.
Thanks to \eqref{eq:m7_eps},
$v_\eps$ can be naturally extended to zero in $\Oint_\eps$ and this extension belongs to $L^2(0,T;H^1(\Om))$.
Consequently, from now on, we will not distinguish between $v_\eps$ defined on $\Oout_{\eps,T}$ and its
extension on the whole of $\Om_T$. The same identification will be adopted for all the functions
which are null on $\Memb$ or $\MembT$; note that this does not apply to $u_{\eps}$.
Moreover, the weak formulation of problem
\eqref{eq:m5_eps}--\eqref{eq:m8_eps} is given by
\begin{multline}\label{eq:weak_form1}
-\int_{\Oout_{\eps,T}}\frac{v_\eps}{\fpc_\eps}\phi_t\di x\di t+\int_{\Oout_{\eps,T}} \nabla v_\eps\nabla\phi\di x\di t
\\
=\int_{\Oout_{\eps,T}} f\phi\di x\di t +\int_{\Oout_\eps} \frac{\overline v_{\eps}}{\fpc_\eps}\phi(x,0)\di x\,,
\end{multline}
for all test functions $\phi\in H^1(\Oout_{\eps,T})$, with $\phi=0$ on $\MembT$ and $\phi(x,T)=0$ in $\Om$ in the sense of traces.
By taking $\phi=v_\eps$ in \eqref{eq:weak_form1} and 
recalling that $v_\eps$
is null in $\Oint_{\eps,T}$, 
by
using standard approximations, we get
\begin{equation}\label{eq:energy1}
\frac{1}{2}\int_{\Om}\frac{v^2_\eps(x,t)}{\fpc_\eps(x)}\di x+\int_{\Om_T} |\nabla v_\eps|^2\di x\di t
=\int_{\Om_T} f v_\eps\di x\di t +\int_{\Oout_\eps}\fpc_{\eps}{\initd}^2\di x\,.
\end{equation}
Recalling
\eqref{eq:m7}, by using Young and Gronwall inequalities,
we arrive at the standard energy inequality
\begin{equation}\label{eq:energy2}
\sup_{t\in(0,T)}\int_{\Om}v^2_\eps\di x+\int_{\Om_T} |\nabla v_\eps|^2\di x\di t
\leq\const (\Vert f\Vert^2_{L^2(\Om_T)}+\Vert \initd\Vert^2_{L^2(\Om)}),
\end{equation}
where $\const$ is independent of $\eps$.
As a consequence of \eqref{eq:energy2} and of \eqref{eq:F_1_eps} we get also
\begin{equation}\label{eq:energy3}
  \sup_{t\in(0,T)}\int_{\Om}u^2_\eps\di x\leq\const (\Vert f\Vert^2_{L^2(\Om_T)}+\Vert \initd\Vert^2_{L^2(\Om)})
  \,.
\end{equation}
This implies that there exist $u_0\in L^2(\Om_T)$ and $v_0\in L^2(0,T;H^1(\Om))$ such that, up to
a subsequence,
\begin{equation}\label{eq:m8}
\begin{aligned}
& u_\eps\wto u_0\qquad &\hbox{weakly in $L^2(\Om_T)$;}
\\
& v_\eps\wto v_0\qquad &\hbox{weakly in $L^2(\Om_T)$;}
\\
& \nabla v_\eps\wto \nabla v_0\qquad &\hbox{weakly in $L^2(\Om_T)$.}
\end{aligned}
\end{equation}

  \begin{remark}
    \label{r:v_strong}
  Indeed, exactly as in \eqref{eq:energy_v}, one can easily prove versions of \eqref{eq:energy1} and \eqref{eq:energy2} yielding also an estimate of $\partial_{t}v_{\eps}$ in $L^{2}_{loc}(t_{0},T;\Oset)$ which is uniform in $\eps$ (though not in $t_{0}$). Thus we may claim here essentially the same convergence as in Remark~\ref{r:vcomp_ext}, namely $v_{\eps}\to v_{0}$ strongly in $L^{2}(\OsetT)$ as $\eps\to0$.
\end{remark}

\begin{remark}
  \label{r:deg_mass}
  Owing to Corollary~\ref{c:add_total}, we have that the total mass in the degenerate problem \eqref{eq:pde_2_eps}--\eqref{eq:init_1_eps} is in fact represented by the measure
  \begin{equation}
    \label{eq:deg_mass_a}
    u_{\eps}(x,t)
    \di x
    +
    \di\msr{\fpc_{\eps}u_{\eps}}{t}
    \,,
    \qquad
    \text{in $\Oset$,}
  \end{equation}
  where $\msr{\fpc_{\eps}u_{\eps}}{t}$ is defined as in \eqref{eq:elliptic_flux} (here of course $\inter$ is replaced with $\Memb$). 
\end{remark}

\subsection{The limit equation in the critical case $\eta\approx\eps^{2/(n-2)}$}
\label{s:crit}
We assume that $\eta=\eta(\eps)$ satisfies
\begin{equation}\label{eq:m32}
\lim_{\eps\to 0}\frac{\eta^{n/2-1}}{\eps}=k\in (0,+\infty).
\end{equation}

\begin{theorem}\label{t:t1}
Let $\{v_\eps\}\subset L^2(0,T;H^1(\Om))$ be the sequence of solutions of problem \eqref{eq:m5_eps}--\eqref{eq:m8_eps}.
Then, the limit function $v_0\in L^2(0,T;H^1(\Om))$ appearing in \eqref{eq:m8}
is the unique solution of the problem
\begin{equation}\label{eq:m26}
\begin{aligned}
&\mathcal{M}_{\Y}(\fpc^{-1}) \pder{v_{0}}{t} -\Delta v_0+k^2\Theta v_0=f\,,\qquad &\hbox{in $\Om_T$;}
\\
& \pder{v_0}{\nu}=0\,,\qquad & \hbox{on $\partial\Om\times(0,T)$;}
\\
& v_0(x,0)=\frac{1}{\mathcal{M}_\Y(\fpc^{-1})}\overline u\,,\qquad & \hbox{in $\Om$,}
\end{aligned}
\end{equation}
where $\mathcal{M}_{\Y}$ denotes the mean average on $\Y$ and $\Theta$ is the capacity of the inclusion $\hole$, defined by
\begin{equation}\label{eq:m34}
\Theta = \int_{\R^n\setminus\hole}\nabla_z \theta\nabla_z\theta \di z\,,
\end{equation}
where $\theta$ is the capacitary potential, i.e., a function satisfying $\theta\in K_\hole$, $\theta=1$ a.e. in $\hole$ and harmonic in
$\R^{n}\setminus \hole$, that is
\begin{equation}\label{eq:m35}
\int_{\R^n\setminus\hole}\nabla_z\theta\nabla_z \Psi\di z=0\,,\quad \forall \Psi\in K_\hole\ \hbox{s.t. $\Psi=0$ a.e. in $\hole$.}
\end{equation}
\end{theorem}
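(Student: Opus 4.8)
The plan is to pass to the limit $\eps\to0$ in the weak formulation \eqref{eq:weak_form1}, testing against a Cioranescu--Murat oscillating function $\psi(1-w_\eps)$ adapted to the critical scaling \eqref{eq:m32}: such a function is admissible (it vanishes on $\MembT$) and its gradient is precisely what generates the capacitary term $k^2\Theta v_0$. First I would record the compactness already available: by \eqref{eq:energy2} the family $\{v_\eps\}$ is bounded in $L^2(0,T;H^1(\Om))$, so \eqref{eq:m8} holds and, as in Remark~\ref{r:v_strong}, $v_\eps\to v_0$ strongly in $L^2(\OsetT)$; throughout, $v_\eps$ and every function vanishing on $\MembT$ is extended by zero to $\OintT$, as agreed in Section~\ref{ss:homog_deg}. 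Then I would introduce the corrector $w_\eps\in H^1(\Om)$ built from the capacitary potential $\theta$ of \eqref{eq:m34}--\eqref{eq:m35}: cell by cell, $w_\eps=1$ on $\eps(\xi+\eta\hole)$, $w_\eps(x)=\theta\big((x-\eps\xi)/(\eps\eta)\big)$ on an intermediate annulus, and $w_\eps$ truncated to $0$ outside a ball of radius $r_\eps$ with $\eps\eta\ll r_\eps\ll\eps$. Then $0\le w_\eps\le1$, $\supp w_\eps$ stays away from $\partial\Om$, $w_\eps=1$ on $\overline{\Oint_\eps}\supset\Memb$, $w_\eps\to0$ strongly in every $L^q(\Om)$ with $q<\infty$, $\nabla w_\eps$ is bounded in $L^2(\Om)$ (so $\nabla w_\eps\wto0$ weakly in $L^2$), and --- this is where \eqref{eq:m32} enters --- for every $\phi_\eps\wto\phi$ weakly in $L^2(0,T;H^1(\Om))$ with $\phi_\eps=0$ on $\OintT$ one has
\[
\int_{\OsetT}\nabla w_\eps\cdot\nabla\phi_\eps\di x\di t\;\longrightarrow\;-k^2\int_{\OsetT}\Theta\,\phi\di x\di t\,.
\]

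Establishing this strange-term property is the main obstacle. In the unfolding language of Section~\ref{s:unfold} it is obtained from Proposition~\ref{p:p1} applied to $\{v_\eps\}$: the weak limit $W$ of \eqref{eq:m20} satisfies $W=-k\,v_0$ on $\OsetT\times\hole$, because $\unfoldee(v_\eps)\equiv0$ on $\hole$ while $\tfrac{\eta^{n/2-1}}{\eps}\mathcal{M}_\eps(v_\eps)\to k\,v_0$ (using \eqref{eq:m18}); moreover $W(x,t,\cdot)$ is harmonic in $\R^n\setminus\hole$, which follows by testing \eqref{eq:weak_form1} against functions of the form $x\mapsto\tfrac{\eps}{\eta^{n/2-1}}\Psi\big(x,t,(x-\eps[x/\eps]_\Y)/(\eps\eta)\big)$ with $\Psi(x,t,\cdot)\in K_\hole$ vanishing on $\hole$, noting that after the rescaling the Dirichlet form converges (via \eqref{eq:m13}, \eqref{eq:m17}) to $\int_{\OsetT\times\R^n}\nabla_z W\cdot\nabla_z\Psi$ while the time, source and initial terms are of lower order; hence $W=-k\,v_0\,\theta$ by \eqref{eq:m35} together with the decay $W\in L^{2^*}$, and the displayed limit comes out with $\int_{\R^n\setminus\hole}|\nabla_z\theta|^2=\Theta$.

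Granting this, I would test \eqref{eq:weak_form1} with $\phi=\psi(1-w_\eps)$, $\psi\in\mathcal{C}^\infty(\overline{\OsetT})$ with $\psi(\cdot,T)=0$, which is admissible since $w_\eps=1$ on $\MembT$, and pass to the limit. As $w_\eps$ is $t$-independent, the time term $-\int v_\eps\fpc_\eps^{-1}(1-w_\eps)\psi_t$ tends to $-\int\mathcal{M}_{\Y}(\fpc^{-1})v_0\psi_t$, because $v_\eps(1-w_\eps)\to v_0$ strongly in $L^2(\OsetT)$ (using $v_\eps\to v_0$ strongly in $L^2$ and $w_\eps\to0$ boundedly a.e.) and $\fpc_\eps^{-1}\wto\mathcal{M}_{\Y}(\fpc^{-1})$ weakly$-\ast$ in $L^\infty$. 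In $\int\nabla v_\eps\nabla[\psi(1-w_\eps)]=\int\nabla v_\eps(1-w_\eps)\nabla\psi-\int\nabla v_\eps\,\psi\,\nabla w_\eps$, the first summand tends to $\int\nabla v_0\nabla\psi$ (weak times strong in $L^2$); for the second, writing $-\int\nabla v_\eps\psi\nabla w_\eps=-\int\nabla w_\eps\cdot\nabla(v_\eps\psi)+\int v_\eps\,\nabla\psi\cdot\nabla w_\eps$, the strange-term limit applied with $\phi_\eps=v_\eps\psi\wto v_0\psi$ (which vanishes on $\OintT$) yields $k^2\int\Theta v_0\psi$ from the first piece and $\nabla w_\eps\wto0$ kills the second. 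Finally $\int f\psi(1-w_\eps)\to\int f\psi$, and $\int_{\Oout_\eps}\overline u\,\psi(\cdot,0)(1-w_\eps)\di x\to\int_\Om\overline u\,\psi(\cdot,0)\di x$ since $\chi_{\Oout_\eps}\to1$ and $w_\eps\to0$ in $L^2(\Om)$. Collecting,
\begin{multline*}
-\int_{\OsetT}\mathcal{M}_{\Y}(\fpc^{-1})v_0\psi_t\di x\di t+\int_{\OsetT}\nabla v_0\nabla\psi\di x\di t+k^2\int_{\OsetT}\Theta v_0\psi\di x\di t\\
=\int_{\OsetT}f\psi\di x\di t+\int_\Om\overline u\,\psi(x,0)\di x\,,
\end{multline*}
for all such $\psi$, hence by density for the full admissible test class; this is exactly the weak formulation of \eqref{eq:m26}, the boundary integral identifying $v_0(\cdot,0)=\mathcal{M}_{\Y}(\fpc^{-1})^{-1}\overline u$ (note $\mathcal{M}_{\Y}(\fpc^{-1})$ is bounded above and below by \eqref{eq:m7}).

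To conclude, since $\mathcal{M}_{\Y}(\fpc^{-1})$ is bounded and strictly positive and $k^2\Theta\ge0$ is a constant, problem \eqref{eq:m26} is a standard linear uniformly parabolic Neumann problem with $L^2$ data, hence it admits a unique weak solution in $L^2(0,T;H^1(\Om))\cap\mathcal{C}([0,T];L^2(\Om))$; consequently $v_0$ does not depend on the extracted subsequence and the whole family $\{v_\eps\}$ converges to it. As indicated, the genuinely delicate step is the construction of $w_\eps$ and the proof of the strange-term property (equivalently, the identification $W=-k\,v_0\,\theta$); once the admissible test function $\psi(1-w_\eps)$ is in hand, everything else is a routine passage to the limit.
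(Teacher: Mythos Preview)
Your argument is correct, but the route differs from the paper's in one structural respect. You work in the classical Cioranescu--Murat style: you construct a concrete corrector $w_\eps$ from the capacitary potential $\theta$ (with truncation at scale $r_\eps$), and then test \eqref{eq:weak_form1} with $\psi(1-w_\eps)$; the ``strange term'' appears when you pass to the limit in the cross integral $\int\nabla v_\eps\,\psi\,\nabla w_\eps$, and you evaluate that limit by first identifying, via Proposition~\ref{p:p1}, that $W=-k\,v_0\,\theta$. The paper instead follows the unfolding scheme of Cioranescu--Damlamian--Griso--Onofrei: it tests with $r(t)\varphi(x)\,w_{\eps,\eta}(x)$ where $w_{\eps,\eta}=w_\hole-w(\tfrac{1}{\eta}\{x/\eps\}_\Y)$ for a \emph{general} $w\in\mathcal C^\infty_c(\R^n)\cap K_\hole$, unfolds only the term $\int\nabla v_\eps\,\nabla w_{\eps,\eta}\,\varphi r$ with $\unfoldee$, and arrives at an equation containing $\int\nabla_z V\cdot\nabla_z w$ with $V=W+kv_0$. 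Choosing $w_\hole=0$ shows $V$ is harmonic in $\R^n\setminus\hole$; then, rather than identifying $V$ explicitly, it computes $\int_{\partial\hole}\nabla_z V\cdot\nu_\hole$ by an integration by parts using $\theta$ (and $W\in K_\hole$) to recover $-kv_0\Theta$. What your approach buys is a direct link to the original ``terme \'etrange'' calculation and the explicit formula $W=-kv_0\theta$; what the paper's approach buys is that it avoids building and truncating the corrector and stays entirely within the unfolding machinery of Section~\ref{s:unfold}, with the test functions $w_{\eps,\eta}$ automatically compactly supported (since $w$ is). A small imprecision in your write-up: you state the strange-term property for a general sequence $\phi_\eps$ but your unfolding justification is really for the specific choice $\phi_\eps=v_\eps\psi$; this is all you need, so it would be cleaner to state only that case.
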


\begin{proof}
The proof is inspired by the ideas in \cite[Proof of Theorem 3.1]{Cioranescu:Damlamian:Griso:Onofrei:2008}.

\noindent
We use here the convergence $v_\eps\to v_0$ strongly in $L^2(\Om_T)$, according to Remark~\ref{r:v_strong}.
Moreover, by the second convergence in \eqref{eq:m1}, \eqref{eq:m20}, \eqref{eq:m17}, and \eqref{eq:m18}, we obtain the existence of a function $W$ as in Proposition~\ref{p:p1}, such that
\begin{alignat}2
\label{eq:m22}
& \frac{\eta^{n/2-1}}{\eps}\unfoldee(v_\eps)\wto V=W+kv_0, & \hbox{weakly in $L^2(\Om_T;L^2_{loc}(\R^n))$;}
\\
\nonumber
& \frac{\eta^{n/2-1}}{\eps}\nabla_z(\unfoldee(v_\eps))\chi_{\frac{1}{\eta}\Y} &
\\
\label{eq:m25bis}
& \qquad\qquad =\eta^{n/2}\unfoldee(\nabla v_\eps)\chi_{\frac{1}{\eta}\Y}\wto \nabla_z W, &\hbox{weakly in $L^2(\Om_T\times\R^n)$,}
\end{alignat}
where we used also \eqref{eq:m13}.
Notice that $W=V-kv_0$, $\nabla_z W=\nabla_z V$, and,
since $\unfoldee(v_\eps)=0$ a.e. in $\Om_T\times\hole$, then also $V=0$ a.e. in $\Om\times\hole$.

Now, set
\begin{equation}\label{eq:m29}
w_{\eps,\eta}(x)=w_\hole-w\left(\frac{1}{\eta}\left\{\frac{x}{\eps}\right\}_\Y\right),\qquad\hbox{for $x\in\R^n$,}
\end{equation}
where $w\in \mathcal{C}^\infty_c(\R^n)\cap K_\hole$, and $w_\hole$ is the constant value assumed by $w$ on $\hole$.
Notice that $w_{\eps,\eta}=0$ a.e. in $\Oint_{\eps}$. Moreover, as in
\cite[Lemma 3.3]{Cioranescu:Damlamian:Griso:Onofrei:2008},
it follows that $w_{\eps,\eta}\wto w_\hole$, weakly in $H^1(\Om)$, and therefore also strongly in $L^2(\Om)$,
and $\unfoldee(\nabla w_{\eps,\eta})=-\frac{1}{\eps\eta}\nabla_z w$.

Take $\phi(x,t)=r(t)\varphi(x)w_{\eps,\eta}(x)$, with $r\in \mathcal{C}^1([0,T])$ and $r(T)=0$, and $\varphi\in \mathcal{C}^1(\overline\Om)$,
as test function in \eqref{eq:weak_form1}, thus obtaining
\begin{multline}\label{eq:weak_form4}
-\int_{\Om_T}\frac{v_\eps}{\fpc_\eps}r_t\varphi w_{\eps,\eta}\di x\di t+
\int_{\Om_T} 
w_{\eps,\eta}r
\nabla v_\eps\nabla\varphi 
\di x\di t+
\int_{\Om_T} 
\varphi r
\nabla v_\eps\nabla w_{\eps,\eta}
\di x\di t
\\
=\int_{\Om_T} fr\varphi w_{\eps,\eta}\di x\di t +\int_{\Om} \overline{u}r(0)\varphi w_{\eps,\eta}\di x\,.
\end{multline}
We unfold with the operator $\unfoldee$ only the third integral. We get
\begin{multline}\label{eq:weak_form5}
-\int_{\Om_T}\frac{v_\eps}{\fpc_\eps}r_t\varphi w_{\eps,\eta}\di x\di t
+\int_{\Om_T} 
w_{\eps,\eta}r
\nabla v_\eps\nabla\varphi 
\di x\di t
\\
+\eta^n\int_{\Om_T\times\frac{1}{\eta}\Y} 
\unfoldee(\varphi r)
\unfoldee(\nabla v_\eps)
\unfoldee(\nabla w_{\eps,\eta})
\di x\di t\di z
\\
=\int_{\Om_T} fr\varphi w_{\eps,\eta}\di x\di t +\int_{\Om} \overline{u}r(0)\varphi w_{\eps,\eta}\di x + O(\eps)\,.
\end{multline}
Taking into account \eqref{eq:m32} and \eqref{eq:m25bis}, 
for the unfolded term we get
\begin{multline}\label{eq:m31}
-
\lim_{\eps\to 0}
\frac{\eta^{n/2-1}}{\eps}
\int_{\Om_T\times\frac{1}{\eta}\Y} 
\eta^{n/2}
\unfoldee(\varphi r)
\unfoldee(\nabla v_\eps)
\nabla_zw
\di x\di t \di z
\\
=
-k
\int_{\Om_T\times(\R^n\setminus\hole)} 
\varphi r
\nabla_z V\nabla_zw
\di x\di t\di z\,,
\end{multline}
once noticing that
$$
\Vert\unfoldee(\varphi r)-\varphi r\Vert_{L^\infty(\Om_T\times\frac{1}{\eta}\Y)}\leq \const \eps\Vert \nabla\varphi\Vert_{L^\infty(\Om)}
\Vert r\Vert_{L^\infty(0,T)}\,.
$$
Hence, passing to the limit, for $\eps\to 0$, in \eqref{eq:weak_form5}
we get
\begin{multline}\label{eq:weak_form6}
-w_\hole\int_{\Om_T}\mathcal{M}_\Y(\fpc^{-1})v_0r_t\varphi\di x\di t
+ w_{\hole}\int_{\Om_T} 
r
\nabla v_0\nabla\varphi 
\di x\di t
\\
-k\int_{\Om_T\times(\R^n\setminus\hole)} 
\varphi r
\nabla_z V\nabla_zw
\di x\di t\di z
\\
= w_{\hole}\int_{\Om_T} fr\varphi \di x\di t + w_{\hole}\int_{\Om} \overline{u}r(0)\varphi \di x\,.
\end{multline}
By taking $w_\hole=0$, it follows
$$
\int_{\R^n\setminus\hole}\nabla_z V\nabla_zw\di z=0\,,
$$
for a.e. $(x,t)\in \Om_T$ and all $w\in K_\hole$, with $w_\hole=0$.
This implies that $V$ is harmonic in $\R^n\setminus\overline\hole$.
Moreover, on integrating by parts in \eqref{eq:weak_form6}
and then dividing by $w_{\hole}\not=0$, we obtain
\begin{multline}
\label{eq:hom1}
-\int_{\Om_T}\mathcal{M}_\Y(\fpc^{-1})v_0r_t\varphi\di x\di t
+\int_{\Om_T} r\nabla v_0\nabla\varphi \di x\di t
\\
-k\!\!\!\!\!\int_{\Om_T\times\partial\hole}\!\!\!\!\!\!\varphi r\nabla_z V 
\nu_\hole\di x\di t\di \sigma(z)
= \int_{\Om_T} fr\varphi \di x\di t + \int_{\Om} \overline{u}r(0)\varphi \di x\,,
\end{multline}
where $\nu_\hole$ is the inward unit normal to the regular hole $\hole$.
Recalling the definition of the capacitary potential $\theta$ given in \eqref{eq:m35},
by a simple computation, we get
\begin{multline}\label{eq:m9}
\int_{\partial\hole}\nabla_z V \nu_\hole\di \sigma(z)=
\int_{\R^n\setminus\hole}\nabla_z V\nabla_z\theta \di z
=\int_{\R^n\setminus\hole}\nabla_z\theta\nabla_z (V-kv_0) \di z
\\
= -kv_0\int_{\partial\hole}\nabla_z \theta \nu_\hole\di \sigma(z)
=-kv_0\int_{\R^n\setminus\hole}\nabla_z \theta \nabla_z\theta\di z
=-kv_0\Theta\,,
\end{multline}
where we have taken into account that $W=V-kv_0\in K_\hole$.
In order to obtain \eqref{eq:m9}, we have used first $\theta$ as test function for the equation satisfied by $V$,
and then $W$ as test function for the equation satisfied by $\theta$.
Inserting \eqref{eq:m9} in \eqref{eq:hom1} and localizing in $\Om_T$, taking into account the density of the product functions, we get
exactly the weak formulation of the homogenized problem \eqref{eq:m26}.

\noindent
Uniqueness is a direct consequence of the linearity of \eqref{eq:m26}, so that the whole sequence $\{v_\eps\}$, and not only a subsequence,
converges to $v_0$.
\end{proof}

The previous result is, essentially, the parabolic version with homogeneous Neumann boundary condition of the result presented in \cite[Section 3]{Cioranescu:Damlamian:Griso:Onofrei:2008}, which was originally obtained in \cite{Cioranescu:Murat:1982},
for the elliptic case with homogeneous Dirichlet boundary condition.

As a consequence of such a result, we get the homogenized equation for the original Fokker--Planck problem \eqref{eq:pde_2_eps}--\eqref{eq:init_2_eps}, as stated in the following theorem.

\begin{theorem}\label{t:t2}
Let $\{u_\eps\}\subset L^2(\Om_T)$ be the sequence of solutions 
of problems \eqref{eq:pde_2_eps}--\eqref{eq:init_2_eps}
and
\eqref{eq:pde_1_eps}--\eqref{eq:init_1_eps}.
Then, the function $u_0\in L^2(\Om_T)$, appearing in \eqref{eq:m8},
is the unique solution of the problem
\begin{equation}\label{eq:m26bis}
\begin{aligned}
& \pder{u_{0}}{t} -\Delta \left(\frac{1}{\mathcal{M}_\Y(\fpc^{-1})}u_0\right)
+k^2\Theta \frac{1}{\mathcal{M}_\Y(\fpc^{-1})}u_0=f\,,\ &&\hbox{in $\Om_T$;}
\\
& \pder{}{\nu}\left(\frac{1}{\mathcal{M}_\Y(\fpc^{-1})}u_0\right)=0\,,\  && \!\!\!\!\!\!\!\!\!\!\!\!\!\!\!\!\!\!\!\!\!\!\!\!\hbox{on $\partial\Om\times(0,T)$;}
\\
& u_0(x,0)=\overline u\,,\ && \hbox{in $\Om$,}
\end{aligned}
\end{equation}
where
$\mathcal{M}_{\Y}$ and $\Theta$ are defined in Theorem \ref{t:t1}.
\end{theorem}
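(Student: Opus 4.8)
\emph{Proof idea.} The plan is to read off the limit $u_0$ of \eqref{eq:m8} from the limit $v_0$ of Theorem~\ref{t:t1}, and then rewrite problem \eqref{eq:m26} in terms of $u_0$. Recall that $v_\eps=\fpc_\eps u_\eps$ in $\Oout_{\eps,T}$, that $v_\eps$ has been extended by zero to $\Oint_{\eps,T}$, and that by \eqref{eq:F_1_eps} one has $u_\eps=F$ in $\Oint_{\eps,T}$ with $F$ as in \eqref{eq:add_F}. Hence the pointwise identity on $\Om_T$
\[
u_\eps=\frac{v_\eps}{\fpc_\eps}+F\,\chi_{\Oint_{\eps,T}}\,.
\]
The first step is to pass to the limit $\eps\to0$ in the right-hand side. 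Since $\fpc$ is Carath\'eodory, $\Y$-periodic in $y$, and by \eqref{eq:m7} (together with $\fpc\in L^\infty$) its reciprocal $\fpc^{-1}$ is bounded away from $0$ and from $\infty$, the oscillating coefficient $\fpc_\eps^{-1}=\fpc^{-1}(x,x/\eps)$ converges weakly-$*$ in $L^\infty(\Om_T)$ to $\mathcal{M}_\Y(\fpc^{-1})$; combining this with the \emph{strong} convergence $v_\eps\to v_0$ in $L^2(\Om_T)$ of Remark~\ref{r:v_strong} yields $\fpc_\eps^{-1}v_\eps\wto \mathcal{M}_\Y(\fpc^{-1})v_0$ weakly in $L^2(\Om_T)$ (equivalently, via $\unfolde(\fpc_\eps^{-1}v_\eps)=\unfolde(\fpc_\eps^{-1})\,\unfolde(v_\eps)$, Proposition~\ref{p:fold2}, the standard behaviour of oscillating coefficients under unfolding, and \eqref{fold050}). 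Moreover in the critical regime $\eta=\eta(\eps)\to0$, so $|\Oint_{\eps,T}|\to0$ and therefore $F\chi_{\Oint_{\eps,T}}\to0$ strongly in $L^2(\Om_T)$. Comparing with $u_\eps\wto u_0$ in \eqref{eq:m8} and using uniqueness of the weak limit gives
\[
u_0=\mathcal{M}_\Y(\fpc^{-1})\,v_0\,,\qquad\text{i.e.}\qquad v_0=\frac{u_0}{\mathcal{M}_\Y(\fpc^{-1})}\,,
\]
the quotient being legitimate since $\mathcal{M}_\Y(\fpc^{-1})$ is bounded away from $0$ and $\infty$.

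It then remains to insert $v_0=u_0/\mathcal{M}_\Y(\fpc^{-1})$ into the problem \eqref{eq:m26} satisfied by $v_0$. As $\mathcal{M}_\Y(\fpc^{-1})$ depends only on $x$, one has $\partial_t\big(u_0/\mathcal{M}_\Y(\fpc^{-1})\big)=\big(\partial_t u_0\big)/\mathcal{M}_\Y(\fpc^{-1})$, so the first term $\mathcal{M}_\Y(\fpc^{-1})\,\partial_t v_0$ of \eqref{eq:m26} becomes exactly $\partial_t u_0$; the diffusion and capacitary terms and the homogeneous Neumann condition transcribe verbatim, and the initial datum transforms as $u_0(x,0)=\mathcal{M}_\Y(\fpc^{-1})\,v_0(x,0)=\overline u$. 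This is precisely the weak formulation of \eqref{eq:m26bis}. Uniqueness for \eqref{eq:m26bis} follows from its linearity — equivalently, from the fact that $u_0\mapsto u_0/\mathcal{M}_\Y(\fpc^{-1})$ is a bijection between the solutions of \eqref{eq:m26bis} and those of \eqref{eq:m26}, the latter unique by Theorem~\ref{t:t1} — so that the whole sequence $\{u_\eps\}$ converges to $u_0$.

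The only genuinely delicate point is the passage to the limit in $\fpc_\eps^{-1}v_\eps$: it rests on knowing that $v_\eps$ converges \emph{strongly} in $L^2(\Om_T)$ (Remark~\ref{r:v_strong}) and that the oscillating factor $\fpc_\eps^{-1}$ averages to $\mathcal{M}_\Y(\fpc^{-1})$, a classical fact for Carath\'eodory periodic functions handled by a density argument (or by the unfolding operator, as above). Everything else is algebraic manipulation of the limit problem already established in Theorem~\ref{t:t1}.
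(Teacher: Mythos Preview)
Your proof is correct and follows essentially the same route as the paper: the decomposition $u_\eps=\fpc_\eps^{-1}v_\eps+F\chi_{\Oint_{\eps,T}}$ is exactly the one used (the paper writes the second summand as $u_\eps\chi_{\Oint_{\eps,T}}$, which equals $F\chi_{\Oint_{\eps,T}}$ by \eqref{eq:F_1_eps}), and the passage to the limit via Remark~\ref{r:v_strong} together with the weak-$*$ convergence of $\fpc_\eps^{-1}$, the vanishing of the interior contribution since $|\Oint_\eps|\to0$, and the final substitution into \eqref{eq:m26} all match the paper's argument.
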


\begin{proof}
Since, by Remark~\ref{r:v_strong}, we have that $v_\eps\to v_0$ strongly in $L^2(\Om_T)$ and $\fpc_\eps^{-1}\wto \mathcal{M}_{\Y}(\fpc^{-1})$
weakly$^*$ in $L^\infty(\Om)$,
it follows that
\begin{equation}
  \label{eq:t2_i}
  u_{\eps}
  =
  u_{\eps}
  \chi_{\Oout_{\eps,T}}
  +
  u_{\eps}
  \chi_{\Oint_{\eps,T}}
  =
  \fpc_\eps^{-1} v_\eps
  +
  u_{\eps}
  \chi_{\Oint_{\eps,T}}
  \wto
  \mathcal{M}_{\Y}(\fpc^{-1}) v_0
  \,,
\end{equation}
weakly in $L^2(\Om_T)$. Indeed as $\eps\to 0$, in all cases 
where $\eta\to 0$, we have also that
$\abs{\Oint_{\eps}}\to 0$, so that from \eqref{eq:F_1_eps}
we have
\begin{equation}
  \label{eq:homog_deg_intl}
  \norma{\unk_\eps}{L^2(\Oint_{\eps,T})}^{2}
  \le
  2T
  \norma{\initd}{L^2(\Oint_{\eps})}^{2}
  +
  2T^{2}
  \norma{f}{L^2(\Oint_{\eps,T})}^{2}
  \to
  0
  \,.
\end{equation}
However, by \eqref{eq:m8}, it follows that $u_\eps\wto u_0$ weakly in $L^2(\Om_T)$.
Therefore, it is enough to replace $v_0=\big(\mathcal{M}_{\Y}(\fpc^{-1})\big)^{-1}u_0$ in problem \eqref{eq:m26}.
Uniqueness is a standard matter for classical parabolic equations, so that the whole sequence $\{v_\eps\}$, and not only a subsequence,
converges to $v_0$.
\end{proof}

Finally we track the limiting behavior of the total distribution of mass.

\begin{corollary}
  \label{co:homog_deg_total}
  As $\eps\to0$ the solution $u_{\eps}$ satisfies for every $0<t<T$ and every $\varphi\in \mathcal{C}(\overline{\Om})$
  \begin{equation*}
    \int_{\Om}
    \varphi
    [\unk_{\eps}(t)
    \di x
    +
    \di\msr{\fpc_{\eps}u_{\eps}}{t}
    ]
    \to
    \int_{\Om}
    \varphi
    \di \finmass_{0t}
    \,,
  \end{equation*}
  where $\msr{\fpc_{\eps}\unk_{\eps}}{t}$ has been introduced in Remark~\ref{r:deg_mass} and 
  \begin{equation}
    \label{eq:homog_deg_total_nn}
    {}\di \finmass_{0t}
    =
    \Big\{
    u_{0}(x,t)
    +
    k^2\Theta \frac{1}{\mathcal{M}_\Y(\fpc^{-1}(x))}
    \int_{0}^{t}
    u_0(x,\tau)
    \di\tau
    \Big\}
    \di x
    \,.
  \end{equation}
\end{corollary}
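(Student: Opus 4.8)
\emph{Proof outline.} The plan is to reduce, for each fixed $\eps$, the $\Memb$-concentrated measure $\msr{\fpc_{\eps}u_{\eps}}{t}$ of Remark~\ref{r:deg_mass} to a bulk integral that is controlled by the $\eps$-uniform bounds, and then pass to the limit. Fix $t\in(0,T)$ and first take $\varphi\in\mathcal{C}^{1}(\overline{\Om})$, and put $w_{\eps}(x;t)=\int_{0}^{t}\fpc_{\eps}(x)u_{\eps}(x,\tau)\di\tau=\int_{0}^{t}v_{\eps}(x,\tau)\di\tau$ on $\Oout_{\eps}$; since $v_{\eps}=0$ on $\Memb$, its extension by zero to $\Oint_{\eps}$ belongs to $H^{1}(\Om)$, and by the analogue of \eqref{eq:elliptic_pde} it solves $\Delta w_{\eps}=u_{\eps}(t)-F(t)$ in $\Oout_{\eps}$, with $F$ as in \eqref{eq:add_F}, homogeneous Dirichlet data on $\Memb$ and homogeneous Neumann data on $\bdr{\Om}$. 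Testing this elliptic equation against $\varphi$ (now \emph{not} required to vanish on $\Memb$), recalling the definition \eqref{eq:elliptic_flux} of the flux measure with $\inter$ replaced by $\Memb$, and performing an integration by parts (legitimate by the $H^{2}$-regularity of $w_{\eps}$ recorded at fixed $\eps$ in Remark~\ref{r:deg_mass}), one obtains
\begin{equation*}
\int_{\Om}\varphi\di\msr{\fpc_{\eps}u_{\eps}}{t}
=\int_{\Oout_{\eps}}\bigl(F(t)-u_{\eps}(t)\bigr)\varphi\di x
-\int_{\Om}\nabla w_{\eps}\scpr\nabla\varphi\di x\,,
\end{equation*}
where I used that $\nabla w_{\eps}\equiv 0$ on $\Oint_{\eps}$. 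Since $u_{\eps}=F$ on $\Oint_{\eps}$ by \eqref{eq:F_1_eps}, adding $\int_{\Om}u_{\eps}(t)\varphi\di x$ to both sides produces the clean identity
\begin{equation*}
\int_{\Om}\varphi\,\bigl[u_{\eps}(t)\di x+\di\msr{\fpc_{\eps}u_{\eps}}{t}\bigr]
=\int_{\Om}F(t)\varphi\di x-\int_{\Om}\nabla w_{\eps}\scpr\nabla\varphi\di x\,.
\end{equation*}

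Next I let $\eps\to0$. By the third line of \eqref{eq:m8}, $\nabla v_{\eps}\wto\nabla v_{0}$ weakly in $L^{2}(\Om_{T})$; testing this convergence against product functions $g(x)\chi_{(0,t)}(\tau)$ shows that $\nabla w_{\eps}(\cdot;t)=\int_{0}^{t}\nabla v_{\eps}(\cdot,\tau)\di\tau\wto\nabla W(t)$ weakly in $L^{2}(\Om)$, where $W(t):=\int_{0}^{t}v_{0}(\cdot,\tau)\di\tau\in H^{1}(\Om)$. Hence the right-hand side of the last identity converges and
\begin{equation*}
\int_{\Om}\varphi\,\bigl[u_{\eps}(t)\di x+\di\msr{\fpc_{\eps}u_{\eps}}{t}\bigr]
\longrightarrow
\int_{\Om}F(t)\varphi\di x-\int_{\Om}\nabla W(t)\scpr\nabla\varphi\di x\,.
\end{equation*}

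It remains to identify this limit with $\int_{\Om}\varphi\di\finmass_{0t}$. Integrating the weak formulation of \eqref{eq:m26} over $(0,t)$ — the coefficients $\mathcal{M}_{\Y}(\fpc^{-1})$ and $k^{2}\Theta$ being independent of $t$ — and using $v_{0}(0)=\bigl(\mathcal{M}_{\Y}(\fpc^{-1})\bigr)^{-1}\initd$, one finds that $W(t)$ satisfies $\int_{\Om}\nabla W(t)\scpr\nabla\psi\di x=\int_{\Om}\bigl(F(t)-\mathcal{M}_{\Y}(\fpc^{-1})v_{0}(t)-k^{2}\Theta\,W(t)\bigr)\psi\di x$ for all $\psi\in H^{1}(\Om)$. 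Taking $\psi=\varphi$, substituting into the displayed limit, and using $u_{0}=\mathcal{M}_{\Y}(\fpc^{-1})v_{0}$ — so that $\mathcal{M}_{\Y}(\fpc^{-1})v_{0}(t)=u_{0}(t)$ and $W(t)=\bigl(\mathcal{M}_{\Y}(\fpc^{-1})\bigr)^{-1}\int_{0}^{t}u_{0}\di\tau$ — the limit becomes $\int_{\Om}\bigl(u_{0}(t)+k^{2}\Theta\,\bigl(\mathcal{M}_{\Y}(\fpc^{-1})\bigr)^{-1}\int_{0}^{t}u_{0}\di\tau\bigr)\varphi\di x$, which is exactly $\int_{\Om}\varphi\di\finmass_{0t}$ by \eqref{eq:homog_deg_total_nn}. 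The passage from $\varphi\in\mathcal{C}^{1}(\overline{\Om})$ to a general $\varphi\in\mathcal{C}(\overline{\Om})$ is then by density, precisely as in the proof of Corollary~\ref{c:add_total}, once one notes that the measures $u_{\eps}(t)\di x+\msr{\fpc_{\eps}u_{\eps}}{t}$ have $\eps$-uniformly bounded total variation, a consequence of the $L^{1}$ estimate of Lemma~\ref{l:mass_con} carried through the $\delta\to0$ limit of Corollary~\ref{c:add_total} applied at fixed $\eps$.

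I expect the main obstacle to be the first step: rewriting the $\Memb$-concentrated measure $\msr{\fpc_{\eps}u_{\eps}}{t}$ as a bulk integral with correctly oriented normals, so that the only $\eps$-dependent remainder is $\int_{\Om}\nabla w_{\eps}\scpr\nabla\varphi\di x$ — the single term for which \eqref{eq:m8} furnishes a uniform bound. Everything afterwards is weak convergence together with a time integration of the homogenized equation \eqref{eq:m26}, both routine.
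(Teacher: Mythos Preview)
Your proposal is correct and follows essentially the same route as the paper: both rewrite the boundary measure via the elliptic identity for the time-primitive $w_{\eps}=\int_{0}^{t}v_{\eps}\di\tau$ to reach the clean formula $\int_{\Om}\varphi[u_{\eps}(t)\di x+\di\msr{\fpc_{\eps}u_{\eps}}{t}]=\int_{\Om}F(t)\varphi\di x-\int_{0}^{t}\int_{\Om}\nabla v_{\eps}\scpr\nabla\varphi\di x\di\tau$, then pass to the limit with \eqref{eq:m8} and identify the result using the homogenized equation \eqref{eq:m26}. The paper is a bit terser, pointing back to the computation \eqref{eq:add_limit_iv}--\eqref{eq:add_limit_v} rather than rederiving the elliptic identity, but the argument is the same.
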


\begin{proof}
  By approximation we may assume $\varphi\in \mathcal{C}^{1}(\overline{\Om})$.
  Using definition \eqref{eq:elliptic_flux},
  reasoning as in \eqref{eq:add_limit_iv} and \eqref{eq:add_limit_v},
  and invoking \eqref{eq:F_1_eps}, 
  we arrive at
  \begin{multline}
    \label{eq:homog_deg_total_i}
    \int_{\Om}
    \varphi
    [\unk_{\eps}(t)
    \di x
    +
    \di\msr{\fpc_{\eps}\unk_{\eps}}{t}
    ]
    =
    \int_{\Om}
    u_{\eps}(t)
    \varphi
    \di x
    +
    \int_{\Memb}
    \int_{0}^{t}
    \pder{(\fpc_{\eps} u_{\eps})}{\normint}
    \di \tau
    \varphi
    \di S
    \\
    =
    \int_{\Om}
    F(t)
    \varphi
    \di x
    -
    \int_{0}^{t}
    \int_{\Oout_{\eps}}
    \grad (\fpc_{\eps} u_{\eps})
    \grad \varphi
    \di x
    \di\tau
    \,.
  \end{multline}
  As $\eps\to 0$ we have that, from \eqref{eq:m8}, 
  the right hand side of \eqref{eq:homog_deg_total_i} approaches
 \begin{displaymath}
    \int_{\Om}
    F(t)
    \varphi
    \di x
    -
    \int_{0}^{t}
    \int_{\Om}
    \grad v_{0}
    \grad \varphi
    \di x
    \di\tau
 .
 \end{displaymath}
 Moreover, invoking \eqref{eq:m26} or \eqref{eq:m26bis},
 we get 
  \begin{multline}
    \label{eq:homog_deg_total_ii}
    \int_{\Om}
    F(t)
    \varphi
    \di x
    -
    \int_{0}^{t}
    \int_{\Om}
    \grad v_{0}
    \grad \varphi
    \di x
    \di\tau
    \\
    =
    \int_{\Om}
    \Big\{
    u_{0}(t)
    +
    k^2\Theta \frac{1}{\mathcal{M}_\Y(\fpc^{-1})}
    \int_{0}^{t}
    u_0(x,\tau)
    \di\tau
    \Big\}
    \varphi
    \di x
    \,.
  \end{multline}
\end{proof}

\subsection{The limit equation in the case $\eta\gg\eps^{2/(n-2)}$}\label{ss:superd}
We assume here that
\begin{equation}
  \label{eq:superd_def}
  \lim_{\eps\to 0}
  \frac{\eta^{\frac{n}{2}-1}}{\eps}
  =
  +
  \infty
  \,.
\end{equation}

In this case, as it will be detailed in the 
next theorem, the inclusions $\Oint_{\eps}$ tend, in the limit $\eps\to0$,
to spread over the whole domain $\Om$. In other words, 
the function $v_\eps\to0$ so that the total mass is 
represented by the limit of the internal problem \eqref{eq:F_1_eps}
and by the external mass that concentrates on the boundary of the inclusions.

\begin{theorem}
  \label{t:superd}
  Under assumption \eqref{eq:superd_def}, as $\eps\to0$ we have 
  that $v_{\eps}\to 0$
  strongly in $L^{2}(\OsetT)$ and that
  the solution $u_{\eps}$ satisfies for every $0<t<T$ and every $\varphi\in \mathcal{C}(\overline{\Om})$
  \begin{equation*}
    \int_{\Om}
    \varphi
    [\unk_{\eps}(t)
    \di x
    +
    \di\msr{\fpc_{\eps}\unk_{\eps}}{t}
    ]
    \to
    \int_{\Om}
    F(t)
    \varphi
    \di x
    \,,
  \end{equation*}
  where $\msr{\fpc_{\eps}\unk_{\eps}}{t}$ has been 
introduced in Remark~\ref{r:deg_mass} and $F$ is defined in \eqref{eq:add_F}.
  Therefore, the density $F$ of the limiting measure satisfies in 
  the standard weak sense
  \begin{equation}
    \label{eq:superd_nn}
    \pder{F}{t}
    =
    f
    \,,
    \quad
    \text{in $\OsetT$;}
    \qquad
    F(x,0)
    =
    \initd(x)
    \,,
    \quad
    x\in\Oset
    \,.
  \end{equation}
\end{theorem}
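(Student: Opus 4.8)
The plan is to exploit the supercritical scaling \eqref{eq:superd_def} through the compactness result of Proposition~\ref{p:p1} to show that the outer unknown $v_\eps$ vanishes in the limit, and then to read off the limiting mass distribution from the integration-by-parts identity already isolated in the proof of Corollary~\ref{co:homog_deg_total}.

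\emph{Step 1: $v_\eps\to0$.} By Remark~\ref{r:v_strong} we have $v_\eps\to v_0$ strongly in $L^2(\Om_T)$ and, by \eqref{eq:m8}, $v_\eps\wto v_0$ weakly in $L^2(0,T;H^1(\Om))$, so it is enough to prove $v_0=0$. I would apply Proposition~\ref{p:p1} to $\{v_\eps\}$, bounded in $L^2(0,T;H^1(\Om))$ by \eqref{eq:energy2}, obtaining $W$ with $\nabla_z W\in L^2(\Om_T\times\R^n)$ such that \eqref{eq:m20} holds. Since $v_\eps\equiv0$ on $\Oint_{\eps,T}$ one has $\unfoldee(v_\eps)=0$ a.e.\ on $\Om_T\times\hole$, and since $\hole\subset\subset\Y\subset\frac{1}{\eta}\Y$ for every $\eta\le1$ the factor $\chi_{\frac{1}{\eta}\Y}$ equals $1$ on $\hole$; hence restricting \eqref{eq:m20} to $\Om_T\times\hole$ leaves only $-\frac{\eta^{n/2-1}}{\eps}\mathcal{M}_\eps(v_\eps)$, which is independent of $z$. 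As $\abs{\hole}>0$ and $L^{2^*}(\hole)\hookrightarrow L^2(\hole)$, this forces $\frac{\eta^{n/2-1}}{\eps}\mathcal{M}_\eps(v_\eps)$ to be bounded in $L^2(\Om_T)$, so by \eqref{eq:superd_def} $\norma{\mathcal{M}_\eps(v_\eps)}{L^2(\Om_T)}\le\const\,\eps/\eta^{n/2-1}\to0$. Comparing with $\mathcal{M}_\eps(v_\eps)\wto v_0$ from Proposition~\ref{p:fold1} gives $v_0=0$, hence $v_\eps\to0$ strongly in $L^2(\Om_T)$ and $\grad v_\eps\wto0$ weakly in $L^2(\Om_T)$.

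\emph{Step 2: the mass distribution, and conclusion.} For $\varphi\in\mathcal{C}^1(\overline\Om)$ the identity \eqref{eq:homog_deg_total_i} gives, for each $0<t<T$,
\[
\int_\Om\varphi\,\big[\unk_\eps(t)\di x+\di\msr{\fpc_\eps\unk_\eps}{t}\big]
=\int_\Om F(t)\varphi\di x-\int_0^t\int_{\Oout_\eps}\grad(\fpc_\eps\unk_\eps)\,\grad\varphi\di x\di\tau\,.
\]
Because $v_\eps=\fpc_\eps\unk_\eps$ on $\Oout_{\eps,T}$ and, extended by $0$ on $\Oint_{\eps,T}$, lies in $L^2(0,T;H^1(\Om))$, the last integral equals $\int_0^t\int_\Om\grad v_\eps\,\grad\varphi\di x\di\tau$, which tends to $0$ by Step~1. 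This proves the asserted convergence for $\varphi\in\mathcal{C}^1(\overline\Om)$; the extension to $\varphi\in\mathcal{C}(\overline\Om)$ is the routine density argument of Corollaries~\ref{c:add_total} and~\ref{co:homog_deg_total}, based on the $\eps$-uniform control of the total mass of $\unk_\eps(t)\di x+\msr{\fpc_\eps\unk_\eps}{t}$ coming from Lemma~\ref{l:mass_con} and the identity above. The limiting measure is thus $F(t)\di x$ with $F$ as in \eqref{eq:add_F}, and \eqref{eq:superd_nn} follows immediately by differentiating \eqref{eq:add_F} in $t$.

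\emph{Main obstacle.} I expect the delicate point to be Step~1: the cell window $\frac{1}{\eta}\Y$ in \eqref{eq:m20} dilates as $\eps\to0$, so restricting the weak limit to $\Om_T\times\hole$ is legitimate only because $\hole$ is a fixed compact set contained in every $\frac{1}{\eta}\Y$, and one must extract $L^2(\Om_T)$-boundedness of $\frac{\eta^{n/2-1}}{\eps}\mathcal{M}_\eps(v_\eps)$ from weak convergence alone. Conceptually this is the supercritical counterpart of the Cioranescu--Murat strange term: here the capacitary absorption on the inclusions is so strong, relative to the period, that it annihilates the whole outer density $v$, and all the mass survives only as the interior contribution $F$ and the part concentrated on $\Memb$.
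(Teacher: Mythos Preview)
Your proof is correct. Step~2 coincides with the paper's argument. Step~1, however, is genuinely different: the paper does \emph{not} invoke Proposition~\ref{p:p1} or the strong compactness of Remark~\ref{r:v_strong}, but instead proves a direct cell-wise Poincar\'e--capacity inequality. On each cell $\Y_\eps$, after rescaling $\hat v(y)=v_\eps(x_{c,\eps}+\eps y)$ and using that $\hat v$ vanishes on $\eta\hole$, \cite[Corollary~4.5.3]{Ziemer:WDF} together with the scaling of capacity gives
\[
\int_{\Y_\eps}v_\eps^2\di x\le\gamma\,\frac{\eps^2}{\eta^{n-2}}\int_{\Y_\eps}\abs{\grad v_\eps}^2\di x\,,
\]
and summing over cells yields $\norma{v_\eps}{L^2(\Om_T)}^2\le\gamma\,\eps^2\eta^{2-n}\norma{\grad v_\eps}{L^2(\Om_T)}^2\to0$ by \eqref{eq:energy2} and \eqref{eq:superd_def}. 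This is quantitatively sharper (an explicit rate) and self-contained. Your route, by contrast, reuses the unfolding estimate \eqref{eq:m15}: restricting $\unfoldee(v_\eps-\mathcal{M}_\eps(v_\eps))$ to $\Om_T\times\hole$ and using $\unfoldee(v_\eps)|_\hole=0$ gives $\norma{\mathcal{M}_\eps(v_\eps)}{L^2(\Om_T)}\le\const\,\eps\eta^{1-n/2}\to0$, whence $v_0=0$ via Proposition~\ref{p:fold1}. Both arguments express the same capacitary mechanism; yours stays within the unfolding framework already set up, while the paper's is more elementary and avoids the preliminary appeal to strong compactness. Your ``main obstacle'' paragraph correctly identifies the issue and resolves it; note that the boundedness you need is in fact given directly by \eqref{eq:m15}, so you need not argue via weak convergence.
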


\begin{proof}
  Recalling \cite[Corollary~4.5.3]{Ziemer:WDF} and the scaling properties 
  (in the parameter $\eta$) of capacity applied to the inclusion $B$,  
  one obtains for any cell $\Y_{\eps}$, on setting $\hat v(y)=v_{\eps}(x_{c,\eps}+\eps y)$, where $x_{c,\eps}$ is the center of $\Y_{\eps}$,
  \begin{equation}
    \label{eq:superd_cap}
    \begin{split}
      \int_{\Y_{\eps}}
      v_{\eps}(x)^{2}
      \di x
      &=
      \eps^{n}
      \int_{\Y}
      \hat v(y)^{2}
      \di y
      \le
      \eps^{n}
      \Big(
      \int_{\Y}
      \hat v(y)^{2^{*}}
      \di y
      \Big)^{\frac{n-2}{n}}
      \\
      &\le
      \eps^{n}
      \frac{\gamma}{\eta^{n-2}}
      \int_{\Y}
      \abs{\grad_{y}\hat v(y)}^{2}
      \di y
      =
      \gamma
      \frac{\eps^{2}}{\eta^{n-2}}
      \int_{\Y_{\eps}}
      \abs{\grad_{x} v_{\eps}(x)}^{2}
      \di x
      \,,
    \end{split}
  \end{equation}
  where $\gamma=\gamma(B,n)$.
  On summing on the cells, we easily obtain from \eqref{eq:energy2} and \eqref{eq:superd_def} that $v_{\eps}\to 0$ in $L^{2}(\OsetT)$.

  Next we reason as in the proof of Corollary~\ref{co:homog_deg_total}, up to \eqref{eq:homog_deg_total_i}. Here we simply note that the last integral there vanishes as $\eps\to 0$ since $\grad v_{\eps}\wto 0$ in $L^{2}(\OsetT)$ owing to the convergence of $v_{\eps}$ to $0$.
\end{proof}

\begin{remark}
\label{r:comparison}
Under the assumption \eqref{eq:superd_def}, in the case $\eta(\eps)\to0$
for $\eps\to0$,
we have that $|\Oint_{\eps}|\to0$ in the same limit, 
see \eqref{eq:homog_deg_intl}.
Moreover, from Theorem~\ref{t:superd}, we have that $v_\eps\to0$.
Then
\begin{equation*}
\|u_\eps\|^2_{L^2(\Om_T)}
=
\|u_\eps\|^2_{L^2(\Oint_{\eps,T})}
+
\|u_\eps\|^2_{L^2(\Oout_{\eps,T})}
\to0,\quad\textup{ as }\eps\to0.
\end{equation*}
Indeed,
the first term tends to zero, as proven in \eqref{eq:homog_deg_intl}, and
the second term is bounded by $C\|v_e\|^2_{L^2(\Om_T)}$
which tends to zero, as well.
\end{remark}

\subsection{The limit equation in the case $\eta\ll\eps^{2/(n-2)}$}\label{ss:sottod}

We assume that $\eta=\eta(\eps)$ satisfies
\begin{equation}\label{eq:m32bis}
\lim_{\eps\to 0}\frac{\eta^{n/2-1}}{\eps}=0.
\end{equation}

\begin{theorem}\label{t:t1_supcrit}
Let $\{v_\eps\}\subset L^2(0,T;H^1(\Om))$ be the sequence of solutions of problem \eqref{eq:m5_eps}--\eqref{eq:m8_eps}.
Then, the limit function $v_0\in L^2(0,T;H^1(\Om))$ appearing in \eqref{eq:m8}
is the unique solution of the problem
\begin{equation}\label{eq:a41}
\begin{aligned}
&\mathcal{M}_{\Y}(\fpc^{-1}) 
\pder{v_{0}}{t} -\Delta v_0=f\,,\qquad &\hbox{in $\Om_T$;}
\\
& \pder{v_0}{\nu}=0\,,\qquad & \hbox{on $\partial\Om\times(0,T)$;}
\\
& v_0(x,0)=\frac{1}{\mathcal{M}_\Y(\fpc^{-1})}\overline u\,,\qquad & \hbox{in $\Om$,}
\end{aligned}
\end{equation}
where $\mathcal{M}_{\Y}$ is defined in Theorem \ref{t:t1}.
\end{theorem}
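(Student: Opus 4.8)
The plan is to follow the proof of Theorem~\ref{t:t1} almost verbatim, the only change being that the subcritical scaling \eqref{eq:m32bis} makes the capacitary contribution disappear, so that the homogenized operator is just $\mathcal{M}_\Y(\fpc^{-1})\partial_t-\Delta$. As a first step I would record, from Remark~\ref{r:v_strong} and \eqref{eq:m8}, that $v_\eps\to v_0$ strongly in $L^2(\Om_T)$, $v_\eps\wto v_0$ weakly in $L^2(0,T;H^1(\Om))$, and $\nabla v_\eps\wto\nabla v_0$ weakly in $L^2(\Om_T)$. Since $\limsup_{\eps\to0}\eta^{n/2-1}/\eps=0<+\infty$, Proposition~\ref{p:p1} is still applicable and provides the objects $W$ and $V$; here, because $k=0$ in \eqref{eq:m32bis}, one has $V=W$ in place of \eqref{eq:m22}, but this object will play no role in the sequel.

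Next I would insert into the weak formulation \eqref{eq:weak_form1} the same oscillating test function $\phi(x,t)=r(t)\varphi(x)w_{\eps,\eta}(x)$, with $w_{\eps,\eta}$ as in \eqref{eq:m29}, $r\in\mathcal{C}^1([0,T])$ with $r(T)=0$, and $\varphi\in\mathcal{C}^1(\overline\Om)$; this is admissible since $w_{\eps,\eta}=0$ a.e.\ in $\Oint_\eps$, hence $\phi=0$ on $\MembT$. The decisive simplification with respect to the critical case is that here the corrector converges \emph{strongly}: by the scaling computation underlying \cite[Lemma~3.3]{Cioranescu:Damlamian:Griso:Onofrei:2008} one has $\norma{\nabla w_{\eps,\eta}}{L^2(\Om)}^2\le\gamma\,\eta^{n-2}/\eps^2=\gamma\,(\eta^{n/2-1}/\eps)^2\to0$ and $\norma{w_{\eps,\eta}-w_\hole}{L^2(\Om)}^2\le\gamma\,\eta^n\to0$, so that $w_{\eps,\eta}\to w_\hole$ strongly in $H^1(\Om)$ with $\nabla w_{\eps,\eta}\to0$ strongly in $L^2(\Om)$. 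Consequently the critical third integral in \eqref{eq:weak_form4}, namely $\int_{\Om_T}\varphi r\,\nabla v_\eps\,\nabla w_{\eps,\eta}\di x\di t$, is the pairing of a sequence bounded (and weakly convergent) in $L^2(\Om_T)$ against a sequence converging strongly to $0$, and therefore vanishes in the limit; equivalently, if one prefers to unfold it as in \eqref{eq:m31}, the resulting expression carries the prefactor $\eta^{n/2-1}/\eps\to0$ while the remaining unfolded factor stays bounded in $L^2$ by Proposition~\ref{p:p1}. The other three integrals in \eqref{eq:weak_form4} pass to the limit exactly as in Theorem~\ref{t:t1}, using the strong $L^2(\Om_T)$ convergence of $v_\eps$, the weak$^*$ convergence $\fpc_\eps^{-1}\wto\mathcal{M}_\Y(\fpc^{-1})$ in $L^\infty(\Om)$, the $L^\infty$ boundedness of $w_{\eps,\eta}$ together with the shrinking of the support of $w_{\eps,\eta}-w_\hole$, and $w_{\eps,\eta}\to w_\hole$. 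One thus reaches the analogue of \eqref{eq:weak_form6} in which the line containing $\nabla_z V\,\nabla_z w$ is simply absent.

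Finally I would choose $w$ with $w_\hole\neq0$ (for instance $w\equiv1$ on a neighbourhood of $\hole$), divide the resulting identity by $w_\hole$, integrate by parts in $x$, and localize in $\Om_T$ using the density of products $r(t)\varphi(x)$; this identifies the limit relation as the weak formulation of \eqref{eq:a41}, the initial datum $v_0(\cdot,0)=\mathcal{M}_\Y(\fpc^{-1})^{-1}\initd$ being read off from the time boundary terms exactly as in Theorem~\ref{t:t1}. Since \eqref{eq:a41} is a standard linear parabolic problem whose coefficient $\mathcal{M}_\Y(\fpc^{-1})$ is bounded above and below, it has a unique solution, so the whole sequence $\{v_\eps\}$ converges to $v_0$. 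I expect the only genuinely delicate points to be, as already in Theorem~\ref{t:t1}, the passages to the limit in the products involving $w_{\eps,\eta}$ --- in particular in $\int_{\Om_T}\fpc_\eps^{-1}v_\eps\,r_t\varphi\,w_{\eps,\eta}$ and in $\int_\Om\initd\,r(0)\varphi\,w_{\eps,\eta}$ --- which are handled, as in \cite[Section~3]{Cioranescu:Damlamian:Griso:Onofrei:2008}, by combining the uniform $L^\infty$ bound and shrinking support of $w_{\eps,\eta}-w_\hole$ with the strong $L^2$ convergence of $v_\eps$.
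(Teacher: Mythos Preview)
Your proposal is correct and follows essentially the same route as the paper: take the oscillating test function $r(t)\varphi(x)w_{\eps,\eta}(x)$ from Theorem~\ref{t:t1}, pass to the limit, and observe that the unfolded ``capacitary'' integral vanishes because of the prefactor $\eta^{n/2-1}/\eps\to0$, which is exactly the paper's \eqref{eq:m42}. Your additional direct observation that $\nabla w_{\eps,\eta}\to0$ strongly in $L^2(\Om)$ under \eqref{eq:m32bis} is a pleasant shortcut but not needed, and the ``integrate by parts in $x$'' step you mention is unnecessary here since, once the capacitary term is gone, the remaining identity is already the weak formulation of \eqref{eq:a41} after dividing by $w_\hole$.
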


\begin{proof}
We can proceed as in the proof of Theorem \ref{t:t1}, taking into account that \eqref{eq:m25bis} is still in force.
Then, taking as test function in \eqref{eq:weak_form1} $\phi(x,t)=r(t)\varphi(x)w_{\eps,\eta}(x)$, with $r\in \mathcal{C}^1([0,T])$ and $r(T)=0$,  $\varphi\in \mathcal{C}^1(\overline\Om)$, and $w_{\eps,\eta}$ as in \eqref{eq:m29}, unfolding and passing to the limit for $\eps\to 0$,
we arrive at \eqref{eq:weak_form5}. However, in the present case, we also have
\begin{equation}\label{eq:m42}
\lim_{\eps\to 0}
\frac{\eta^{n/2-1}}{\eps}
\int_{\Om_T\times\frac{1}{\eta}\Y} 
\eta^{n/2}
\unfoldee(\varphi r)
\unfoldee(\nabla v_\eps)
\nabla_zw
\di x\di t \di z
=0\,,
\end{equation}
due to \eqref{eq:m32bis}. Hence, we get
\begin{multline}\label{eq:a43}
-w_\hole\int_{\Om_T}\mathcal{M}_\Y(\fpc^{-1})v_0r_t\varphi\di x\di t
+ w_{\hole}\int_{\Om_T} r\nabla v_0\nabla\varphi \di x\di t
\\
= w_{\hole}\int_{\Om_T} fr\varphi \di x\di t + w_{\hole}\int_{\Om} \overline{u}r(0)\varphi \di x\,,
\end{multline}
which, after dividing by $ w_{\hole}$ and taking into account the density of the product functions, gives the weak formulation of \eqref{eq:a41}.
\noindent
Uniqueness is a standard matter for classical parabolic equations, so that the whole sequence $\{v_\eps\}$, and not only a subsequence,
converges to $v_0$.
\end{proof}

The previous result is in accordance with the elliptic version for Dirichlet homogeneous boundary conditions
presented in \cite[Section 3]{Cioranescu:Damlamian:Griso:Onofrei:2008} and originally obtained in \cite{Cioranescu:Murat:1982}.

The homogenized equation for the original Fokker--Planck problem
\eqref{eq:pde_2_eps}--\eqref{eq:init_2_eps} is given in the following
theorem.

\begin{theorem}\label{t:t2_supcrit}
Let $\{u_\eps\}\subset L^2(\Om_T)$ be the sequence of solutions of 
problems \eqref{eq:pde_2_eps}--\eqref{eq:init_2_eps} 
and \eqref{eq:pde_1_eps}--\eqref{eq:init_1_eps}.
Then, the function $u_0\in L^2(\Om_T)$, appearing in \eqref{eq:m8},
is the unique solution of the problem
\begin{equation}\label{eq:m26ter}
\begin{aligned}
& \pder{u_{0}}{t} -\Delta \left(\frac{1}{\mathcal{M}_\Y(\fpc^{-1})}u_0\right)
=f\,,\ &&\hbox{in $\Om_T$;}
\\
& \pder{}{\nu}\left(\frac{1}{\mathcal{M}_\Y(\fpc^{-1})}u_0\right)=0\,,\  && 
\hbox{on $\partial\Om\times(0,T)$;}
\\
& u_0(x,0)=\overline u\,,\ && \hbox{in $\Om$,}
\end{aligned}
\end{equation}
where $\mathcal{M}_{\Y}$ is defined in Theorem \ref{t:t1}.
\end{theorem}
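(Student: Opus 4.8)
The plan is to deduce the homogenized equation for $u_0$ from the equation for $v_0$ furnished by Theorem~\ref{t:t1_supcrit}, following verbatim the argument used to pass from Theorem~\ref{t:t1} to Theorem~\ref{t:t2}. First I would recall that, by Remark~\ref{r:v_strong}, $v_\eps\to v_0$ strongly in $L^2(\Om_T)$, while $\fpc_\eps^{-1}\wto\mathcal{M}_\Y(\fpc^{-1})$ weakly$^*$ in $L^\infty(\Om)$; the product of a strongly convergent sequence and a weakly$^*$ convergent one passes to the limit, so that $\fpc_\eps^{-1}v_\eps\wto\mathcal{M}_\Y(\fpc^{-1})\,v_0$ weakly in $L^2(\Om_T)$.

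Next I would split
\[
u_\eps
=
u_\eps\chi_{\Oout_{\eps,T}}+u_\eps\chi_{\Oint_{\eps,T}}
=
\fpc_\eps^{-1}v_\eps+u_\eps\chi_{\Oint_{\eps,T}}\,,
\]
and note that the interior part is asymptotically negligible in $L^2$: the subcriticality assumption \eqref{eq:m32bis} forces $\eta\to0$, hence $\abs{\Oint_\eps}\to0$, and then \eqref{eq:F_1_eps} together with the elementary bound \eqref{eq:homog_deg_intl} gives $\norma{u_\eps}{L^2(\Oint_{\eps,T})}\to0$. Consequently $u_\eps\wto\mathcal{M}_\Y(\fpc^{-1})\,v_0$ weakly in $L^2(\Om_T)$; comparing with \eqref{eq:m8} yields $u_0=\mathcal{M}_\Y(\fpc^{-1})\,v_0$, i.e. $v_0=\big(\mathcal{M}_\Y(\fpc^{-1})\big)^{-1}u_0$. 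It then suffices to insert this relation into problem \eqref{eq:a41} to recover the weak formulation of \eqref{eq:m26ter}; in particular the absence of a capacitary term in \eqref{eq:a41} is inherited by \eqref{eq:m26ter}.

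Finally, uniqueness for \eqref{eq:m26ter} is classical, since the coefficient $1/\mathcal{M}_\Y(\fpc^{-1})$ is bounded and bounded away from zero by \eqref{eq:m7}; this upgrades the convergence to hold for the whole sequence $\{u_\eps\}$ rather than for a subsequence. I do not anticipate a real obstacle, the computation being essentially a transcription of the proof of Theorem~\ref{t:t2}; the only point requiring care is the one already highlighted there, namely checking that the mass of $u_\eps$ carried by the shrinking inclusions $\Oint_\eps$ does not survive in the limit, which is exactly where the hypothesis $\eta\to0$ enters.
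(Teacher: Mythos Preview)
Your proposal is correct and follows essentially the same approach as the paper's own proof, which likewise invokes Remark~\ref{r:v_strong} for the strong convergence $v_\eps\to v_0$, the weak$^*$ convergence $\fpc_\eps^{-1}\wto\mathcal{M}_\Y(\fpc^{-1})$, the estimate \eqref{eq:homog_deg_intl} to dispose of the interior contribution, and then substitutes $v_0=u_0/\mathcal{M}_\Y(\fpc^{-1})$ into \eqref{eq:a41}. Your write-up is in fact a more detailed version of the paper's terse argument.
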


\begin{proof}
  Notice that $v_\eps\to v_0$ strongly in $L^2(\Om_T)$, by Remark~\ref{r:v_strong}, $\fpc_\eps^{-1}\wto \mathcal{M}_{\Y}(\fpc^{-1})$
  weakly$^*$ in $L^\infty(\Om)$, and $u_\eps\wto u_0$ weakly in $L^2(\Om_T)$ by \eqref{eq:m8}. Reasoning also as in \eqref{eq:homog_deg_intl} we can easily obtain \eqref{eq:m26ter}
  from \eqref{eq:a41} by replacing $v_{0}=u_{0}/\mathcal{M}_{\Y}(\fpc^{-1})$.
\end{proof}

\begin{remark}
  \label{r:sottod_mass}
  Under the assumption \eqref{eq:m32bis}, a version of Corollary~\ref{co:homog_deg_total}, where we let formally $k=0$ in the statement, follows essentially with the same proof.
\end{remark}

\section{Homogenization of the non degenerate problem}
\label{ss:homog_ndeg}

Here 
we are interested in homogenizing the
problem \eqref{eq:pde_d}--\eqref{eq:init_d}
for $\delta$ fixed.
For the reader's convenience, we rewrite it 
omitting the subscript index $\delta$ from the 
notation of the unknown:
\begin{alignat}{2}
  \label{eq:pde_22_eps}
  \pder{\unk_\eps}{t}
  -
  \Lapl(\fpc_{\eps,\delta}\unk_\eps)
  &=
  f
  \,,
  &\qquad&
  \text{in $\Om_{T}$;}
  \\
  \label{eq:bdr_22_eps}
  \pder{(\fpc_{\eps,\delta}\unk_\eps)}{\normal}
  &=
  0
  \,,
  &\qquad&
  \text{on $\bdr{\Oset}\times(0,T)$;}
  \\
  \label{eq:init_22_eps}
  \unk_\eps(x,0)
  &=
  \initd(x)
  \,,
  &\qquad&
  \text{in $\Oset$.}
\end{alignat}
Here, $\fpc_{\eps,\delta}$ is the coefficient defined in \eqref{eq:m3}.

As above, if we set $v_\eps=\fpc_{\eps,\delta} u_\eps$, we can 
rewrite the previous problem as
\begin{alignat}{2}
  \label{eq:m52_eps}
  \frac{1}{\fpc_{\eps,\delta}}\pder{v_\eps}{t}
  -
  \Lapl v_\eps
  &=
  f
  \,,
  &\qquad&
  \text{in $\Oset_{T}$;}
  \\
  \label{eq:m62_eps}
  \pder{v_\eps}{\normal}
  &=
  0
  \,,
  &\qquad&
  \text{on $\bdr{\Oset}\times(0,T)$;}
  \\
  \label{eq:m82_eps}
  v_\eps(x,0)
  &=
  \overline v_{\eps}(x)
  \,,
  &\qquad&
  \text{in $\Oset$,}
\end{alignat}
where $\overline v_{\eps}=\fpc_{\eps,\delta}\initd\in L^2(\Om)$.

On invoking \eqref{eq:energy_v} we obtain that, up to a
subsequence, in the limit $\eps\to0$
\begin{equation}\label{eq:m88}
\begin{aligned}
& v_\eps\to v_0 &&\qquad \hbox{strongly in $L^2(\Om_T)$;}\\
& \nabla v_\eps\wto \nabla v_0 &&\qquad \hbox{weakly in $L^2(\Om_T)$,}
\end{aligned}
\end{equation}
and, since $\fpc_{\eps,\delta}\ge C\delta>0$ for all $\eps>0$,
\begin{equation}
  \label{eq:m88_bis}
  u_\eps\wto u_0 \qquad \hbox{weakly in $L^2(\Om_T)$,}
\end{equation}
for a suitable  $u_0\in L^2(\Om_T)$.
Note that both $u_0$ and $v_0$ depend on the fixed parameter 
$\delta$. 

\subsection{The limit equation in the case $\eta\to 0$}\label{ss:critical_ndeg}

We assume that $\eta=\eta(\eps)$, with $\eta(\eps)$ being a 
general infinitesimal function, for $\eps\to 0$.

\begin{theorem}\label{t:t12}
Let $\{v_\eps\}\subset L^2(0,T;H^1(\Om))$ be the sequence of solutions 
of problem \eqref{eq:m52_eps}--\eqref{eq:m82_eps}
and assume that $\eta(\eps)\to0$ as $\eps\to0$.
Then, the function $v_0\in L^2(0,T;H^1(\Om))$, appearing in \eqref{eq:m88},
is the unique solution of the problem
\eqref{eq:a41} and thus it does not depend on $\delta$. 
\end{theorem}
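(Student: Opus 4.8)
The plan is to let $\eps\to0$ directly in the weak formulation of \eqref{eq:m52_eps}--\eqref{eq:m82_eps}, which is the obvious analogue of \eqref{eq:weak_form12} (with $v_\eps$, $\fpc_{\eps,\delta}$, $\Om_T$ in place of $\vde$, $\fpcde$, $\OsetT$, and with no Dirichlet condition on $\Memb$, since here $v_\eps$ lives on all of $\Om_T$). The point is that in this $v$-formulation the principal part $-\Lapl v_\eps$ carries the constant coefficient $1$, so no corrector and no unfolding operator are needed: the only oscillating datum is the factor $1/\fpc_{\eps,\delta}$ multiplying $\partial_t v_\eps$. By \eqref{eq:m88} we already have $v_\eps\to v_0$ strongly in $L^2(\Om_T)$ and $\nabla v_\eps\wto\nabla v_0$ weakly in $L^2(\Om_T)$, so the elliptic term, the source term and the initial term (recall $\overline v_\eps/\fpc_{\eps,\delta}=\initd$) pass to the limit at once; only the time term needs work.

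The heart of the matter is $\int_{\Om_T}(v_\eps/\fpc_{\eps,\delta})\phi_t$, and the key lemma I would prove is that, $\delta>0$ being fixed and $\eta(\eps)\to0$, one has $1/\fpc_{\eps,\delta}\wto\mathcal{M}_{\Y}(\fpc^{-1})$ weakly$^*$ in $L^\infty(\Om)$. To see this, write $1/\fpc_{\eps,\delta}=\fpc_\eps^{-1}+(\delta^{-1}-1)\,\fpc_\eps^{-1}\chi_{\Oint_\eps}$: the first summand converges weakly$^*$ to $\mathcal{M}_{\Y}(\fpc^{-1})$ by the standard mean-value property for periodic oscillating coefficients applied to the Carath\'{e}odory function $\fpc^{-1}\in L^\infty(\Om\times\Y)$ (admissible by \eqref{eq:m7}), exactly as already used in the proofs of Theorems~\ref{t:t1} and~\ref{t:t2}; while the second is bounded in $L^1(\Om)$ by $C^{-1}(\delta^{-1}-1)\abs{\Oint_\eps}\to0$, since $\abs{\Oint_\eps}\to0$ as $\eps\to0$ when $\eta\to0$ (as observed around \eqref{eq:homog_deg_intl}). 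Testing against an arbitrary $\psi\in L^1(\Om)$ and using absolute continuity of the integral then gives the claim. Combining this with the strong convergence $v_\eps\to v_0$ in $L^2(\Om_T)$ — split $v_\eps/\fpc_{\eps,\delta}=(v_\eps-v_0)/\fpc_{\eps,\delta}+v_0/\fpc_{\eps,\delta}$, the first term tending to $0$ in $L^2(\Om_T)$ because $\norm{1/\fpc_{\eps,\delta}}_{L^\infty(\Om)}\le(C\delta)^{-1}$ is bounded, and the second converging weakly to $\mathcal{M}_{\Y}(\fpc^{-1})v_0$ because $1/\fpc_{\eps,\delta}$ does not depend on $t$ and $\int_0^T v_0\,\phi_t\di t\in L^1(\Om)$ — one obtains $\int_{\Om_T}(v_\eps/\fpc_{\eps,\delta})\phi_t\to\int_{\Om_T}\mathcal{M}_{\Y}(\fpc^{-1})\,v_0\,\phi_t$.

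Passing to the limit then produces precisely the weak form of $\mathcal{M}_{\Y}(\fpc^{-1})\partial_{t}v_0-\Lapl v_0=f$ in $\Om_T$, with homogeneous Neumann condition on $\partial\Om\times(0,T)$ and $v_0(x,0)=\initd/\mathcal{M}_{\Y}(\fpc^{-1})$, that is, problem \eqref{eq:a41}, in which $\delta$ visibly does not appear. Since \eqref{eq:a41} is a linear parabolic problem it has a unique solution, so the convergence holds for the whole sequence $\{v_\eps\}$ and $v_0$ is $\delta$-independent. The step I expect to be the main obstacle is the weak$^*$ convergence of $1/\fpc_{\eps,\delta}$: one has to check that the blow-up factor $\delta^{-1}$ of the coefficient on the inclusions is harmless, which works precisely because $\delta$ is kept fixed while $\abs{\Oint_\eps}$ vanishes — and this is exactly why no capacitary term survives here, in contrast with the critical case of Theorem~\ref{t:t1}, where the Dirichlet condition $v_\eps=0$ on $\Memb$ was essential.
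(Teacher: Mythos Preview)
Your proposal is correct and follows essentially the same approach as the paper: both pass to the limit directly in the weak formulation, observe that the gradient, source and initial terms are immediate, and handle the time term by separating the contribution on $\Oint_\eps$ (which vanishes because $\abs{\Oint_\eps}\to0$ while $\delta$ is fixed) from the weak convergence $\fpc_\eps^{-1}\wto\mathcal{M}_\Y(\fpc^{-1})$. The only organizational difference is that the paper splits the time integral as $I_{1,\eps}+I_{2,\eps}$ over $\Oint_\eps$ and $\Oout_\eps$ and uses factored test functions $r(t)\varphi(x)$, whereas you package the same computation as a weak$^*$ convergence $1/\fpc_{\eps,\delta}\wto\mathcal{M}_\Y(\fpc^{-1})$ in $L^\infty(\Om)$ and then use the splitting $v_\eps=(v_\eps-v_0)+v_0$; the substance is identical.
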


\begin{proof}
Take $\phi(x,t)=r(t)\varphi(x)$, with $r\in \mathcal{C}^1([0,T])$, $r(T)=0$,
and $\varphi\in \mathcal{C}^1(\overline\Om)$, as 
test function in \eqref{eq:weak_form12}, thus obtaining
\begin{multline}\label{eq:weak_form42}
-\int_{\Om_T}\frac{v_\eps}{\fpc_{\eps,\delta}}r_t\varphi \di x\di t+
\int_{\Om_T} r \nabla v_\eps\nabla\varphi \di x\di t
\\
=\int_{\Om_T} fr\varphi \di x\di t +\int_{\Om} \overline{u}r(0)\varphi \di x\,.
\end{multline}
Notice that
$$
\int_{\Om_T}\frac{v_\eps}{\fpc_{\eps,\delta}}r_t\varphi \di x\di t=
\int_{\Om_T}v_\eps r_t\varphi\left(\frac{1}{\delta\fpc_\eps}\chi_{\Oint_{\eps}}+\frac{1}{\fpc_\eps}\chi_{\Oout_{\eps}}\right) \di x\di t
=I_{1,\eps}+I_{2,\eps}\,.
$$
By \eqref{eq:m7}, we obtain
$$
|I_{1,\eps}|\leq \frac{\const}{\delta}\Vert v_\eps\Vert_{L^2(\Om_T)}\sqrt{|\Oint_\eps|}\leq \const\eta^{n/2}\to 0\,,
\quad\hbox{for $\eta\to 0$,}
$$
where we have taken into account \eqref{eq:energy3} and the fact that $|\Oint_\eps|\sim \frac{|\Oset|}{\eps^n}\eta^n\eps^n=|\Oset|\eta^n$.
By the same argument, $\chi_{\Oout_\eps}\to 1$ 
strongly in $L^q(\Oset)$, for any $q\geq 1$, so that
$$
\frac{1}{\fpc_\eps}\chi_{\Oout_\eps}\wto \mathcal{M}_\Y(\fpc^{-1})\,,\qquad \hbox{weakly in $L^2(\Om)$.}
$$
Hence,
$$
I_{2,\eps}\to \int_{\Om_T}v_0r_t\varphi\mathcal{M}_\Y(\fpc^{-1})\di x\di t\,.
$$

Note that we also have
\begin{equation}
\label{eq:star1}
\frac{1}{\delta\fpc_\eps}
=
\frac{1}{\delta\fpc_\eps}\chi_{\Oint_{\eps}}
+\frac{1}{\fpc_\eps}\chi_{\Oout_{\eps}}
\wto
\mathcal{M}_{\Y}(\fpc^{-1}),
\quad
\textup{ weakly in } L^2(\Om).
\end{equation}
Therefore, passing to the limit for $\eps\to 0$, we get
\begin{multline}\label{eq:weak_form62}
-\int_{\Om_T}\mathcal{M}_\Y(\fpc^{-1})v_0r_t\varphi\di x\di t
+ \int_{\Om_T} \nabla v_0\nabla\varphi r\di x\di t
\\
= \int_{\Om_T} fr\varphi \di x\di t 
+ \int_{\Om} \overline{u}r(0)\varphi \di x\,,
\end{multline}
which is the weak formulation of the problem \eqref{eq:a41}. Again, 
uniqueness follows by the linearity of the homogenized problem,
so that the whole sequence, and not only a subsequence, converges to $v_0$.
\end{proof}

As a consequence, we get the homogenized equation for 
the original Fokker--Planck problem 
\eqref{eq:pde_22_eps}--\eqref{eq:init_22_eps}, 
as stated in the following theorem.

\begin{theorem}\label{t:t22}
Let $\{u_\eps\}\subset L^2(\Om_T)$ be the sequence of solutions 
of problem \eqref{eq:pde_22_eps}--\eqref{eq:init_22_eps}
and assume that $\eta(\eps)\to0$ and $\eps\to0$.
Then, the function $u_0\in L^2(\Om_T)$, appearing in \eqref{eq:m88_bis},
is the unique solution of the problem \eqref{eq:m26ter} and thus 
it does not depend on $\delta$.
\end{theorem}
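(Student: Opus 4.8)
The plan is to argue exactly as in the proof of Theorem~\ref{t:t2_supcrit}, transferring to $u_\eps$ the homogenization result already established for the auxiliary unknown. By Theorem~\ref{t:t12} the sequence $v_\eps=\fpc_{\eps,\delta}u_\eps$ converges strongly in $L^2(\Om_T)$ to the unique solution $v_0$ of \eqref{eq:a41}, which does not depend on $\delta$; moreover, by \eqref{eq:m88_bis}, $u_\eps\wto u_0$ weakly in $L^2(\Om_T)$ for some $u_0\in L^2(\Om_T)$. Hence it only remains to express $u_0$ through $v_0$ and to rewrite \eqref{eq:a41} as an equation for $u_0$.

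The core step is to show that $u_0=\mathcal{M}_\Y(\fpc^{-1})v_0$. First I would split $u_\eps=u_\eps\chi_{\Oout_\eps}+u_\eps\chi_{\Oint_\eps}$. On the outer part $u_\eps\chi_{\Oout_\eps}=\fpc_\eps^{-1}\chi_{\Oout_\eps}v_\eps$; since $|\Oint_\eps|\sim|\Om|\eta^{n}\to0$, one has $\chi_{\Oout_\eps}\to1$ in every $L^q(\Om)$, hence $\fpc_\eps^{-1}\chi_{\Oout_\eps}\wto\mathcal{M}_\Y(\fpc^{-1})$ weakly in $L^2(\Om)$ as in the proof of Theorem~\ref{t:t12}; this sequence is moreover bounded in $L^\infty(\Om)$ by \eqref{eq:m7}, so multiplying by the strongly convergent $v_\eps$ gives $u_\eps\chi_{\Oout_\eps}\wto\mathcal{M}_\Y(\fpc^{-1})v_0$ weakly in $L^2(\Om_T)$. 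On the inner part $u_\eps\chi_{\Oint_\eps}=(\delta\fpc_\eps)^{-1}\chi_{\Oint_\eps}v_\eps$, so by \eqref{eq:m7}
\[
\int_{\Oint_{\eps,T}}u_\eps^{2}\di x\di t
\le
\frac{\gamma}{\delta^{2}}
\int_{\Oint_{\eps,T}}v_\eps^{2}\di x\di t
\longrightarrow 0
\qquad\text{as }\eps\to0,
\]
since $\int_{\Oint_{\eps,T}}v_\eps^{2}\le 2\|v_\eps-v_0\|_{L^{2}(\Om_T)}^{2}+2\int_{\Oint_{\eps,T}}v_0^{2}$ and the last term vanishes because $|\Oint_\eps|\to0$ and the Lebesgue integral is absolutely continuous. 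Thus $u_\eps\chi_{\Oint_\eps}\to0$ strongly in $L^2(\Om_T)$ (here $\delta$ is held fixed), so that $u_\eps\wto\mathcal{M}_\Y(\fpc^{-1})v_0$ weakly in $L^2(\Om_T)$; comparing with \eqref{eq:m88_bis} and using uniqueness of the weak limit, $u_0=\mathcal{M}_\Y(\fpc^{-1})v_0$.

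Finally, since $\mathcal{M}_\Y(\fpc^{-1})$ is a fixed, $t$-independent function of $x$ which is bounded and bounded away from zero by \eqref{eq:m7}, I would substitute $v_0=u_0/\mathcal{M}_\Y(\fpc^{-1})$ into \eqref{eq:a41}: the evolution equation becomes $\partial_t u_0-\Delta\bigl(u_0/\mathcal{M}_\Y(\fpc^{-1})\bigr)=f$, the homogeneous Neumann condition and the initial datum $u_0(\cdot,0)=\initd$ follow at once, and the weak formulations correspond because the multiplier does not depend on $t$. Uniqueness for the linear parabolic problem \eqref{eq:m26ter} is standard, whence the whole sequence $\{u_\eps\}$ converges and $u_0$ is manifestly independent of $\delta$. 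I expect the only slightly delicate point to be the interior estimate above: a priori $u_\eps$ is only of order $\delta^{-1}$ on $\Oint_\eps$, but this is harmless because $\delta$ is kept fixed while $|\Oint_\eps|\to0$ and $v_\eps\to v_0$ strongly in $L^2$; the rest is routine bookkeeping.
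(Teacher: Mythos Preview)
Your proof is correct and follows essentially the same route as the paper. The paper's argument is slightly more compact: it uses directly that $u_\eps=v_\eps/\fpc_{\eps,\delta}$ together with the convergence $\fpc_{\eps,\delta}^{-1}\wto\mathcal{M}_\Y(\fpc^{-1})$ already recorded in \eqref{eq:star1} (which bundles your outer and inner pieces into one statement), paired with the strong $L^2$ convergence of $v_\eps$; your explicit splitting into $\Oout_\eps$ and $\Oint_\eps$ parts is just an unpacking of that same step.
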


\begin{proof}
Recalling that $u_\eps=v_\eps/(\delta\fpc_\eps)$
and
using \eqref{eq:m88}, \eqref{eq:m88_bis}, and \eqref{eq:star1}, 
we get
$u_\eps\wto u_0$
and 
$u_\eps\wto\mathcal{M}_\Y(\fpc^{-1})v_0$, 
which yields 
$v_0=u_0/\mathcal{M}_\Y(\fpc^{-1})$.
Thus the statement follows from Theorem~\ref{t:t12}.
\end{proof}

\subsection{The limit equation in the case $\eta= 1$}\label{ss:superndeg}

\begin{theorem}\label{t:t13}
Let $\{v_\eps\}\subset L^2(0,T;H^1(\Om))$ be the sequence of 
solutions of problem \eqref{eq:m52_eps}--\eqref{eq:m82_eps}
and assume $\eta=1$.
Then, the function $v_0\in L^2(0,T;H^1(\Om))$, appearing in \eqref{eq:m88},
is the unique solution of the problem
\begin{equation}\label{eq:m263}
\begin{aligned}
&\mathcal{M}_{\Y}(\fpc_{\delta}^{-1}) \pder{v_{0}}{t} -\Delta v_0=f\,,\qquad &\hbox{in $\Om_T$;}
\\
& \pder{v_0}{\nu}=0\,,\qquad & \hbox{on $\partial\Om\times(0,T)$;}
\\
& v_0(x,0)=\frac{1}{\mathcal{M}_\Y(\fpc_{\delta}^{-1})}\overline u\,,\qquad & \hbox{in $\Om$,}
\end{aligned}
\end{equation}
where
$$
\mathcal{M}_{\Y}(\fpc_{\delta}^{-1})(x)=\int_{\hole}\frac{1}{\delta\fpc(x,y)}\di y+\int_{\Y^*}\frac{1}{\fpc(x,y)}\di y\,.
$$
\end{theorem}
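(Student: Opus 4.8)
The plan is to adapt the argument of Theorem~\ref{t:t12} to the case $\eta=1$, the essential difference being that now $|\Oint_\eps|$ does \emph{not} vanish as $\eps\to0$, so the inclusions contribute to the homogenized zeroth order coefficient. First note that, since $\fpc_{\eps,\delta}\ge C\delta>0$, problem \eqref{eq:m52_eps}--\eqref{eq:m82_eps} is uniformly parabolic for fixed $\delta$ and its principal part is the plain Laplacian: only the coefficient $1/\fpc_{\eps,\delta}$ of $\partial_t v_\eps$ oscillates. From \eqref{eq:m88} and \eqref{eq:m88_bis} we have, up to a subsequence, $v_\eps\to v_0$ strongly in $L^2(\Om_T)$, $\nabla v_\eps\wto\nabla v_0$ weakly in $L^2(\Om_T)$, and $u_\eps\wto u_0$ weakly in $L^2(\Om_T)$, with $v_0$ and $u_0$ depending on the fixed $\delta$.

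Next I would identify the homogenized coefficient. Writing $\fpc_{\eps,\delta}(x)=\fpc_\delta(x,x/\eps)$ with $\fpc_\delta(x,y)=\delta\fpc(x,y)$ for $y\in\hole$ and $\fpc_\delta(x,y)=\fpc(x,y)$ for $y\in\Y^*$, one has that $\fpc_\delta$ is a $\Y$-periodic Carath\'eodory function with $0<C\delta\le\fpc_\delta\le\norma{\fpc}{L^\infty}$, so $1/\fpc_\delta(x,\cdot)$ is bounded and $\Y$-periodic. The same oscillation lemma that gives $\fpc_\eps^{-1}\wto\mathcal{M}_\Y(\fpc^{-1})$ in Theorem~\ref{t:t12} yields here
\[
\frac{1}{\fpc_{\eps,\delta}}\;\wto\;\int_\Y\frac{\di y}{\fpc_\delta(x,y)}=\int_{\hole}\frac{\di y}{\delta\fpc(x,y)}+\int_{\Y^*}\frac{\di y}{\fpc(x,y)}=\mathcal{M}_\Y(\fpc_\delta^{-1})(x)\,,\qquad\text{weakly$^*$ in }L^\infty(\Om)\,,
\]
and in particular weakly in $L^2(\Om)$. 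Unlike the cases $\eta\to0$, here the contribution of $\hole$ is not negligible, which is exactly why the limiting coefficient keeps its dependence on $\delta$.

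I would then insert the factorized test function $\phi(x,t)=r(t)\varphi(x)$, with $r\in\mathcal{C}^1([0,T])$, $r(T)=0$, and $\varphi\in\mathcal{C}^1(\overline\Om)$, into the weak formulation \eqref{eq:weak_form12}, obtaining
\[
-\int_{\Om_T}\frac{v_\eps}{\fpc_{\eps,\delta}}\,r_t\varphi\di x\di t+\int_{\Om_T}r\,\nabla v_\eps\nabla\varphi\di x\di t=\int_{\Om_T}fr\varphi\di x\di t+\int_{\Om}\overline u\,r(0)\varphi\di x\,,
\]
and pass to the limit $\eps\to0$. The second integral converges to $\int_{\Om_T}r\,\nabla v_0\nabla\varphi$ by the weak convergence of $\nabla v_\eps$ — no corrector is needed since the principal part is constant coefficient — and the right-hand side does not depend on $\eps$. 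The first integral is the delicate one: I would rewrite it as $\int_{\Om_T}v_\eps\,\big(\tfrac{1}{\fpc_{\eps,\delta}}r_t\varphi\big)\di x\di t$ and combine the strong $L^2(\Om_T)$ convergence of $v_\eps$ with the weak$^*$ $L^\infty$ convergence of $1/\fpc_{\eps,\delta}$ (times the fixed smooth factor $r_t\varphi$) to get the limit $\int_{\Om_T}\mathcal{M}_\Y(\fpc_\delta^{-1})\,v_0\,r_t\varphi\di x\di t$. This step — strong compactness of $v_\eps$ overcoming the weak oscillation of the coefficient — is the main (indeed essentially the only) obstacle, and it is precisely where the strong convergence in \eqref{eq:m88}, itself resting on the uniform-in-$\eps$ estimate for $\partial_t v_\eps$, is indispensable.

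Passing to the limit gives
\[
-\int_{\Om_T}\mathcal{M}_\Y(\fpc_\delta^{-1})\,v_0\,r_t\varphi\di x\di t+\int_{\Om_T}r\,\nabla v_0\nabla\varphi\di x\di t=\int_{\Om_T}fr\varphi\di x\di t+\int_{\Om}\overline u\,r(0)\varphi\di x\,,
\]
and, by density of the products $r(t)\varphi(x)$ among admissible test functions, this is the weak formulation of \eqref{eq:m263}: an integration by parts in time, matching the coefficients of $r(0)$, produces the initial condition $v_0(x,0)=\overline u/\mathcal{M}_\Y(\fpc_\delta^{-1})$, while the remaining terms give $\mathcal{M}_\Y(\fpc_\delta^{-1})\,\partial_t v_0-\Delta v_0=f$ together with the homogeneous Neumann condition on $\partial\Om$. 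Since \eqref{eq:m263} is a linear, uniformly parabolic problem, its solution is unique; hence the whole sequence $\{v_\eps\}$ converges to $v_0$, not merely a subsequence.
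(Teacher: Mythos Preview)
Your proof is correct and shares the paper's overall structure: test with $\phi=r(t)\varphi(x)$ in \eqref{eq:weak_form12} and pass to the limit, the only nontrivial term being $I_\eps=\int_{\Om_T}(v_\eps/\fpc_{\eps,\delta})\,r_t\varphi\di x\di t$. The paper, however, handles this term differently: rather than invoking the weak$^*$ convergence $1/\fpc_{\eps,\delta}\wto\mathcal{M}_\Y(\fpc_\delta^{-1})$ directly and pairing it with the strong convergence of $v_\eps$, it applies the periodic unfolding operator $\unfolde$ to $I_\eps$ and uses the \emph{strong} $L^2(\Om_T\times\Y)$ convergences $\unfolde(1/(\delta\fpc_\eps))\unfolde(\chi_{\Oint_\eps})\to(1/(\delta\fpc))\chi_\hole$ and $\unfolde(1/\fpc_\eps)\unfolde(\chi_{\Oout_\eps})\to(1/\fpc)\chi_{\Y^*}$, recorded as \eqref{eq:t13_i} and then reused in the proof of Theorem~\ref{t:t23}. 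Your route is more elementary, bypassing the unfolding machinery entirely; the paper's route stays within the framework of Section~\ref{s:unfold} and, by separating slow and fast variables, sidesteps any concern about the discontinuity of $\fpc_\delta(x,\cdot)$ across $\partial\hole$. One small caveat in your argument: the assertion that $\fpc_\delta$ is Carath\'eodory relies on reading the convention as ``continuous in $x$, measurable in $y$'' (the natural one in this setting), since under the opposite convention $\fpc_\delta$ would fail to be continuous in $y$; this is worth making explicit.
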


\begin{proof}
The proof can be carried out as in the case of Theorem \ref{t:t12}, the only difference being in the term
$$
I_\eps:=\int_{\Om_T}\frac{v_\eps}{\fpc_{\eps,\delta}}r_t\varphi \di x\di t=
\int_{\Om_T}v_\eps r_t\varphi\left(\frac{1}{\delta\fpc_\eps}\chi_{\Oint_{\eps}}+\frac{1}{\fpc_\eps}\chi_{\Oout_{\eps}}\right) \di x\di t,
$$
which can be treated passing to the standard 
unfolding operator since, in this case, 
the inclusions $\Omega^1_\eps$ rescale periodically with
respect to $\eps$.
We get
\begin{multline*}
I_\eps=\int_{\Om_T\times\Y}\unfolde(v_\eps)\unfolde( r_t\varphi)\left(\unfolde\left(\frac{1}{\delta\fpc_\eps}\right)\unfolde(\chi_{\Oint_{\eps}})
\right.
\\
\left.
+\unfolde\left(\frac{1}{\fpc_\eps}\right)\unfolde(\chi_{\Oout_{\eps}})\right) \di y\di x\di t +O(\eps).
\end{multline*}
By passing to the limit and taking into account that
\begin{equation}
  \label{eq:t13_i}
  \unfolde\left(\frac{1}{\delta\fpc_\eps}\right)\unfolde(\chi_{\Oint_{\eps}})\to \frac{1}{\delta\fpc}\chi_{\hole}
  \quad\hbox{and}\quad
  \unfolde\left(\frac{1}{\fpc_\eps}\right)\unfolde(\chi_{\Oout_{\eps}})\to \frac{1}{\fpc}\chi_{\Y^*}
\end{equation}
strongly in $L^2(\Om_T\times\Y)$, we obtain
$$
I_\eps\to \int_{\Om_T}v_0 r_t\varphi\left(\int_{\hole}\frac{1}{\delta\fpc(x,y)}\di y+\int_{\Y^*}\frac{1}{\fpc(x,y)}\di y\right)\di x\di t\,,
$$
which gives the thesis.
\end{proof}

Passing to the homogenized equation for the original 
Fokker--Planck problem \eqref{eq:pde_22_eps}--\eqref{eq:init_22_eps},
we obtain the following result.

\begin{theorem}\label{t:t23}
Let $\{u_\eps\}\subset L^2(\Om_T)$ be the sequence of solutions 
of problem \eqref{eq:pde_22_eps}--\eqref{eq:init_22_eps}
and assume $\eta=1$.
Then, the function $u_0\in L^2(\Om_T)$, appearing in \eqref{eq:m88},
is the unique solution of the problem
\begin{equation}\label{eq:a46}
\begin{aligned}
& \pder{u_{0}}{t} -\Delta \left(\frac{1}{\mathcal{M}_\Y(\fpc^{-1}_{\delta})}\,
u_0\right)=f, &\!\!\!\!\!\hbox{in $\Om_T$;}
\\
& \pder{}{\nu}\left(\frac{1}{\mathcal{M}_\Y(\fpc^{-1}_{\delta})}u_0\right)=0,&
 \hbox{on $\partial\Om\times(0,T)$;}
\\
& u_0(x,0)=\overline u, & \!\!\hbox{in $\Om$.}
\end{aligned}
\end{equation}
\end{theorem}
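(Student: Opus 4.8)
The plan is to obtain Theorem~\ref{t:t23} from Theorem~\ref{t:t13} by the same linear change of unknown that was used to pass from Theorem~\ref{t:t12} to Theorem~\ref{t:t22}: essentially all the analytic work is already contained in Theorem~\ref{t:t13}, and what remains is to identify the weak limit $u_0$ of \eqref{eq:m88_bis} in terms of the limit $v_0$ of \eqref{eq:m88}, and then to rewrite problem \eqref{eq:m263} in the $u$-variable.

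First I would recall that $v_\eps=\fpc_{\eps,\delta}u_\eps$, so that
\[
u_\eps
=
\frac{v_\eps}{\fpc_{\eps,\delta}}\,,
\qquad
\frac{1}{\fpc_{\eps,\delta}}
=
\frac{1}{\delta\fpc_\eps}\chi_{\Oint_\eps}
+
\frac{1}{\fpc_\eps}\chi_{\Oout_\eps}\,.
\]
Since $\eta=1$ the inclusions are exactly $\eps$-periodic, so this coefficient is of the form $g(x,x/\eps)$ with $g$ bounded (because $\fpc_{\eps,\delta}\ge C\delta$ by \eqref{eq:m7}) and $\Y$-periodic in the second variable; in particular $|\Oint_\eps|\not\to0$, so, unlike in Theorem~\ref{t:t22}, the interior contribution does not disappear and a genuine homogenization of the coefficient is needed. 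Reasoning exactly as in the proof of Theorem~\ref{t:t13}, i.e.\ by means of the unfolding identity \eqref{fold050}, the product rule for $\unfolde$, and the strong convergences \eqref{eq:t13_i}, one gets $1/\fpc_{\eps,\delta}\wto\mathcal{M}_{\Y}(\fpc_{\delta}^{-1})$ weakly in $L^2(\Om_T)$. Pairing this weakly convergent factor against the \emph{strongly} convergent factor $v_\eps\to v_0$ in $L^2(\Om_T)$ provided by \eqref{eq:m88}, I obtain $u_\eps=v_\eps\,\fpc_{\eps,\delta}^{-1}\wto v_0\,\mathcal{M}_{\Y}(\fpc_{\delta}^{-1})$, at least weakly in $L^1(\Om_T)$; comparing this with \eqref{eq:m88_bis} and uniqueness of weak limits yields the identification
\[
v_0=\frac{u_0}{\mathcal{M}_{\Y}(\fpc_{\delta}^{-1})}\,.
\]

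Finally I would substitute this relation into the homogenized problem \eqref{eq:m263}: the equation $\mathcal{M}_{\Y}(\fpc_{\delta}^{-1})\,\partial_t v_0-\Delta v_0=f$ becomes $\partial_t u_0-\Delta\big(\mathcal{M}_{\Y}(\fpc_{\delta}^{-1})^{-1}u_0\big)=f$, while the Neumann condition and the initial datum $v_0(\cdot,0)=\mathcal{M}_{\Y}(\fpc_{\delta}^{-1})^{-1}\initd$ turn into $\partial_\nu\big(\mathcal{M}_{\Y}(\fpc_{\delta}^{-1})^{-1}u_0\big)=0$ and $u_0(\cdot,0)=\initd$; this is precisely the weak formulation of \eqref{eq:a46}. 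Uniqueness for \eqref{eq:a46} is the standard one for linear parabolic equations in divergence form, since for $\delta$ fixed the coefficient $\mathcal{M}_{\Y}(\fpc_{\delta}^{-1})^{-1}$ is bounded between two positive constants by virtue of \eqref{eq:m7}; consequently the whole sequence $\{u_\eps\}$, and not merely a subsequence, converges to $u_0$.

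I expect the only delicate point to be the passage to the limit in the product $v_\eps\,\fpc_{\eps,\delta}^{-1}$: the oscillating coefficient must be paired against the strongly convergent factor $v_\eps$, which is exactly why the strong $L^2(\Om_T)$ convergence in \eqref{eq:m88} (together with the cell-average computation \eqref{eq:t13_i} behind Theorem~\ref{t:t13}) is essential. Everything else is a routine linear substitution and a classical uniqueness argument, so no further obstacle is anticipated.
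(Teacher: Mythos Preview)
Your proposal is correct and follows essentially the same route as the paper: identify $u_0=\mathcal{M}_{\Y}(\fpc_{\delta}^{-1})\,v_0$ by pairing the strong $L^2$ convergence of $v_\eps$ from \eqref{eq:m88} with the oscillating coefficient $\fpc_{\eps,\delta}^{-1}$ via the unfolding convergences \eqref{eq:t13_i}, and then substitute this relation into \eqref{eq:m263}. The paper's proof is terser but relies on exactly the same ingredients you single out (strong convergence of $v_\eps$, the cell-average identification from Theorem~\ref{t:t13}, and linear substitution plus standard uniqueness).
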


\begin{proof}
Recalling that $u_{\eps}=v_{\eps}/\fpc_{\eps,\delta}$, 
similarly as in the proof of Theorem~\ref{t:t22},
thanks to \eqref{eq:t13_i} we obtain
\begin{equation}
  \label{eq:t23_i}
  u_{0}
  =
  \frac{v_{0}}{\mathcal{M}_\Y(\fpc^{-1}_{\delta})}
  \,.
\end{equation}
Then the statement follows by replacing \eqref{eq:t23_i}
in \eqref{eq:m263}.
\end{proof}

We note that the functions $u_0$ and $v_0$ appearing in 
Theorems~\ref{t:t13} and \ref{t:t23} do depend 
on the parameter $\delta$, even if, as said at the 
beginning of this section, this dependence is not explicitly 
reported in the notation. 

\subsection{The limit $\delta\to 0$ of the homogenized problem}\label{s:deghom}

The next step is to let $\delta\to 0$, in the only case where the
homogenized problem depends on $\delta$, i.e., when $\eta=1$. To
this purpose, we first notice that \eqref{eq:m263} leads to the energy
estimate
\begin{equation}\label{eq:a44}
\sup_{t\in(0,T)}\frac{1}{\delta}\int_{\Om}v^2_0\di x+\int_{\Om_T} |\nabla v_0|^2\di x\di t
\leq\const (\Vert f\Vert^2_{L^2(\Om_T)}+\Vert \initd\Vert^2_{L^2(\Om)}),
\end{equation}
where $\const>0$ is independent of $\delta$.
In particular, it follows that
\begin{equation}\label{eq:a45}
\sup_{t\in(0,T)}\int_{\Om}v^2_0\di x\leq\const\delta.
\end{equation}
Therefore, from \eqref{eq:a44} and \eqref{eq:a45}, 
we obtain that $v_0$ tends to $0$ weakly 
in 
$L^2(0,T;H^1(\Om))$
and strongly in
$L^2(\Om_T)$.

On the other hand, concerning the solution $u_0$ of the homogenized 
Fokker--Planck problem \eqref{eq:a46},
we have the following result.

\begin{theorem}\label{t:t10}
Let $u_0$ be the solution of problem \eqref{eq:a46}. Then, 
we have that $u_0\di x\di t\wto F\di x\di t$ in the weak$^*$ sense of measures,
where $F$ is given in \eqref{eq:add_F}, for $\delta\to0$
and thus \eqref{eq:superd_nn} is in force. 
\end{theorem}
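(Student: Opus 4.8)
The plan is to treat $u_0\di x\di t$ as a family of measures and to pass to the limit $\delta\to0$ in the weak formulation of \eqref{eq:a46}, exploiting that the ``potential'' $v_0=u_0/\mathcal{M}_\Y(\fpc_{\delta}^{-1})$ vanishes in the limit while $u_0$ carries a uniformly bounded mass.

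First I would collect the uniform bounds. As observed just before the statement, the energy estimates \eqref{eq:a44}--\eqref{eq:a45} give $v_0\to0$ strongly in $L^2(\Om_T)$ and weakly in $L^2(0,T;H^1(\Om))$; in particular $\grad v_0\wto0$ weakly in $L^2(\Om_T)$. These estimates do not control $u_0$ in $L^2$, since $\mathcal{M}_\Y(\fpc_{\delta}^{-1})(x)\sim\delta^{-1}\int_{\hole}\fpc(x,y)^{-1}\di y\to+\infty$; nevertheless problem \eqref{eq:a46} has the form \eqref{eq:pde_d}--\eqref{eq:init_d} with the bounded, strictly positive coefficient $1/\mathcal{M}_\Y(\fpc_{\delta}^{-1})$, so the $L^1$ contraction argument of Lemma~\ref{l:mass_con} applies and yields
\[
\sup_{0<t<T}\norma{u_0(t)}{L^1(\Om)}\le\norma{\initd}{L^1(\Om)}+\norma{f}{L^1(\Om_T)}\,,
\]
uniformly in $\delta$. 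Hence the measures $u_0\di x\di t$ are bounded in total variation on $\overline{\Om_T}$, so, up to a subsequence, $u_0\di x\di t\wto\mu$ in the weak$^*$ sense of duality with $\mathcal{C}(\overline{\Om_T})$, for some Radon measure $\mu$.

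Next I would pass to the limit in the weak formulation of \eqref{eq:a46}, which, in the integrated form analogous to \eqref{eq:weak_form22} and recalling $v_0=u_0/\mathcal{M}_\Y(\fpc_{\delta}^{-1})$, reads
\[
-\int_{\Om_T}u_0\,\phi_t\di x\di t+\int_{\Om_T}\grad v_0\,\grad\phi\di x\di t=\int_{\Om_T}f\phi\di x\di t+\int_\Om\initd\,\phi(x,0)\di x
\]
for all $\phi\in H^1(\Om_T)$ with $\phi(\cdot,T)=0$. Restricting to $\phi\in\mathcal{C}^1(\overline{\Om_T})$ with $\phi(\cdot,T)=0$, the first term tends to $-\int_{\Om_T}\phi_t\di\mu$, the second tends to $0$ because $\grad v_0\wto0$ in $L^2(\Om_T)$, and the right-hand side is independent of $\delta$; therefore
\[
-\int_{\Om_T}\phi_t\di\mu=\int_{\Om_T}f\phi\di x\di t+\int_\Om\initd\,\phi(x,0)\di x\,.
\]
This is precisely the weak form of the problem $\partial_t\mu=f$ in $\Om_T$ with $\mu(0)=\initd\di x$; choosing $\phi(x,t)=\int_t^T\psi(x,\tau)\di\tau$ with $\psi\in\mathcal{C}^1(\overline{\Om_T})$ arbitrary expresses $\int_{\Om_T}\psi\di\mu$ through the data only, so $\mu$ is unique, and the explicit formula \eqref{eq:add_F} shows that $\mu=F\di x\di t$ solves it. As the limit does not depend on the subsequence, the whole family converges, $u_0\di x\di t\wto F\di x\di t$, and \eqref{eq:superd_nn} is then immediate from \eqref{eq:add_F}.

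The main obstacle is the lack of any bound on $u_0$ beyond $L^1$: as $\delta\to0$ mass concentrates in the inclusions (consistently with Theorem~\ref{t:add_limit}), so the limit can only be captured as a measure, and one must check that the limiting identity still determines $\mu$ uniquely — in particular that no mass is lost at $t=0$ or through $\partial\Om$, which is exactly what the use of general $\mathcal{C}^1$ test functions with no constraint on $\partial\Om$ ensures. The remaining points — that \eqref{eq:a46} holds against such test functions, e.g.\ via factorized functions of the type \eqref{eq:weaktest}, and the routine limit passages — present no real difficulty.
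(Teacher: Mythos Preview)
Your proposal is correct and follows essentially the same approach as the paper: both use the $L^{1}$ bound from Lemma~\ref{l:mass_con} to extract a weak$^{*}$ limit, pass to the limit in the weak formulation of \eqref{eq:a46} exploiting that $\grad v_{0}\wto 0$, and identify the limit via the test function of the form \eqref{eq:weaktest}. The paper is slightly more economical in that it applies the test function \eqref{eq:weaktest} from the outset (so that $-\phi_{t}=\varphi$ directly yields $\int u_{0}\varphi$), whereas you first derive the general identity $-\int\phi_{t}\di\mu=\int f\phi+\int\initd\,\phi(0)$ and then specialize; but this is only an organizational difference, and your extra remark on uniqueness of the limit is a welcome clarification.
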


\begin{proof}
  On one hand we know from the estimate given by
  Lemma~\ref{l:mass_con} that $u_{0}$ converges in the weak$^{*}$
  sense (up to subsequences). On the other hand, passing to the limit
  in the weak formulation of \eqref{eq:a46}, where we select the test
  function as in \eqref{eq:weaktest}, we obtain
\begin{multline}\label{eq:a47}
  \lim_{\delta\to 0}
  \int_{\OsetT}
  u_{0}
  \varphi
  \di x
  \di t
  =
  -
  \lim_{\delta\to 0}
  \int_{\OsetT} u_0\phi_t\di x  \di t
  \\
  =
  \lim_{\delta\to 0}
  \left(-\int_{\OsetT}\nabla {\left( \frac{1}{|\Y^*|\mathcal{M}_{\Y^*}(\fpc^{-1})+\delta^{-1}|\hole|\mathcal{M}_{\hole}(\fpc^{-1})}u_0\right)}\nabla\phi\di x \di t
  \right.
  \\
  \left.+
    \int_{\OsetT} f\phi\di x  \di t+\int_{\Oset} \initd\phi(0)\di x \right)
  \\
  =
  \lim_{\delta\to 0}\left(-\int_{\OsetT}\nabla v_0\nabla\phi\di x \di t
    +
    \int_{\OsetT} f\phi\di x  \di t+\int_{\Oset} \initd\phi(0)\di x \right)
  \\
  =
  \int_{\OsetT}
  \varphi(x,\tau)
  \Big[
  \int_{0}^{\tau}
  f(x,t)
  \di t
  +
  \initd(x)
  \Big]
  \di x
  \di \tau
  \,.
\end{multline}
This implies the claim.
\end{proof}

\begin{remark}\label{r:r1}
In Sections~\ref{s:fp_nondeg} and \ref{ss:homog_deg} we have first 
computed the degeneration limit $\delta\to0$ and afterwards 
the homogenization limit $\eps\to0$ of the original 
problem \eqref{eq:pde_d}--\eqref{eq:init_d}. 
On the contrary, in Section~\ref{ss:homog_ndeg} we have 
performed the two limits in the reversed order, first the 
homogenization and afterwards the degeneration one. It is natural 
to compare the results and look for possible commutation properties.

As we have already noted, in the homogenized limit problem 
of Theorem~\ref{t:t22}, namely, for $\eta(\eps)\to0$ as $\eps\to0$,
no dependence on the degeneration parameter $\delta$ appears,
so that the resulting problem cannot degenerate.

In particular, comparing the results of Section~\ref{ss:homog_deg} 
with 
Theorems~\ref{t:t22} and \ref{t:t10},
we can distinguish two cases:

i) $\eta\to0$:
the limits $\eps\to 0$ and $\delta\to 0$ for 
problem \eqref{eq:pde_d}--\eqref{eq:init_d} do not commute 
in the 
critical case
$\eta\approx\eps^{2/(n-2)}$
of Section~\ref{s:crit} 
and 
in the 
supercritical case
$\eta\gg\eps^{2/(n-2)}$
of Section~\ref{ss:superd}, 
while they do commute in the subcritical case 
$\eta\ll\eps^{2/(n-2)}$
of Section~\ref{ss:sottod};

ii) $\eta=1$: we are then, again,  
in the supercritical case 
$\eta\gg\eps^{2/(n-2)}$
of Subsection~\ref{ss:superd} and the two limits commute.

It is worth noting that in the critical case 
$\eta\approx\eps^{2/(n-2)}$
the 
``terme \'{e}trange venu d'ailleurs" of 
\eqref{eq:m26bis}, already found in 
\cite{Cioranescu:Murat:1982} for the elliptic problem, appears
only if the degeneration limit is taken before the homogenization 
one. In the reverse case the more standard \eqref{eq:m26ter} 
problem
is found. 

In view of these results, a natural question arise about the 
behavior of the model when the degeneration and the homogenization limits 
are taken simultaneously. 
\end{remark}

\section{An explicit solution and a counterexample}
\label{s:od}
In this section we exhibit an explicit solution of the 
one--dimensional 
Fokker--Planck equation which will enable us to build a 
counterexample in which the solution becomes unbounded in a finite time,
though the Fokker--Planck coefficient is bounded away from zero, 
but depends on time. 

We look first at the one-dimensional problem in $\R$ 
\begin{alignat}{2}
  \label{eq:od_pde}
  \odu_{i,t}
  -
  \odfpc_{i}
  \odu_{i,xx}
  &=
  0
  \,,
  &\qquad&
  (-1)^{i}x>0
  \,,
  t>0
  \,,
  \\
  \label{eq:od_dir}
  \odfpc_{1}
  \odu_{1}(0-,t)
  &=
  \odfpc_{2}
  \odu_{2}(0+,t)
  \,,
  &\qquad&
  t>0
  \,,
  \\
  \label{eq:od_flux}
  \odfpc_{1}
  \odu_{1,x}(0-,t)
  &=
  \odfpc_{2}
  \odu_{2,x}(0+,t)
  \,,
  &\qquad&
  t>0
  \,,
  \\
  \label{eq:od_initd}
  \odu_{i}(x,0)
  &=
  \odinit
  \,,
  &\qquad&
  (-1)^{i}x>0
  \,,
\end{alignat}
where $i=1,2$,
and $\odinit$, $\odfpc_{i}>0$ are constants. 
Note that \eqref{eq:od_dir} and \eqref{eq:od_flux}
correspond to \eqref{eq:jump_d} and \eqref{eq:jumpflux_d}.
Below the initial data will be replaced with a bounded piecewise continuous function, and the coefficients $\odfpc_{i}$ with a piecewise constant function depending on $(x,t)$. The definition of weak solution to such problems is then essentially the same as \eqref{eq:weak_form22}, since in this instance the dependence of the coefficient $\fpcde$ on time does not play any role (see, also, the comment at the end of 
the Remark~\ref{r:supbound}); it does have anyway serious implications as we will show presently. Note that, owing to classical results of local regularity, the solution is smooth where the coefficients and data are smooth. Also, we remark that we work with solutions defined in $\R$ for the sake of formal simplicity (to avoid the irrelevant influence of boundary conditions), but our argument is essentially local.


\begin{lemma}
  \label{l:od_mod}
  There exists a solution to \eqref{eq:od_pde}--\eqref{eq:od_initd} satisfying
  \begin{equation}
    \label{eq:od_mod_n}
    \odu_{1}(0-,t)
    =
    \odinit
    \frac{\sqrt{\odfpc_{2}}}{\sqrt{\odfpc_{1}}}
    \,,
    \qquad
    \odu_{2}(0+,t)
    =
    \odinit
    \frac{\sqrt{\odfpc_{1}}}{\sqrt{\odfpc_{2}}}
    \,,
    \qquad
    t>0
    \,.
  \end{equation}
\end{lemma}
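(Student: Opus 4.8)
The plan is to exhibit the solution explicitly, exploiting that on each of the two half-lines the equation \eqref{eq:od_pde} is simply the heat equation with the constant diffusivity $\odfpc_i$ and the constant initial datum $\odinit$. I would look for $\odu_i$ in self-similar form
\[
\odu_1(x,t)=\odinit+(c_1-\odinit)\operatorname{erfc}\!\Big(\frac{-x}{2\sqrt{\odfpc_1 t}}\Big),
\qquad
\odu_2(x,t)=\odinit+(c_2-\odinit)\operatorname{erfc}\!\Big(\frac{x}{2\sqrt{\odfpc_2 t}}\Big),
\]
with two constants $c_1,c_2$ still to be fixed. For either $i$ this function solves $\odu_{i,t}-\odfpc_i\odu_{i,xx}=0$ in the open half-line for $t>0$ and is $\mathcal C^\infty$ there (the complementary error function of $x/(2\sqrt{\odfpc_i t})$ being the classical self-similar antiderivative of the Gauss kernel), it tends to $\odinit$ as $|x|\to\infty$, and, since $\operatorname{erfc}(z)\to0$ as $z\to+\infty$, it attains the value $\odinit$ as $t\to0+$ for every $x$ with $(-1)^ix>0$; thus \eqref{eq:od_initd} holds. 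Moreover $\operatorname{erfc}(0)=1$ yields the traces $\odu_1(0-,t)=c_1$ and $\odu_2(0+,t)=c_2$.

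Next I would impose the transmission conditions \eqref{eq:od_dir}--\eqref{eq:od_flux}. Using $\tfrac{d}{dz}\operatorname{erfc}(z)=-\tfrac{2}{\sqrt\pi}e^{-z^2}$ one computes $\odu_{1,x}(0-,t)=(c_1-\odinit)/\sqrt{\pi\odfpc_1 t}$ and $\odu_{2,x}(0+,t)=-(c_2-\odinit)/\sqrt{\pi\odfpc_2 t}$. Hence \eqref{eq:od_dir} reads $\odfpc_1 c_1=\odfpc_2 c_2$, and \eqref{eq:od_flux} reads $\sqrt{\odfpc_1}\,(c_1-\odinit)=-\sqrt{\odfpc_2}\,(c_2-\odinit)$; notice that the common factor $1/\sqrt{\pi t}$ cancels, so both conditions hold for all $t>0$ at once and reduce to an algebraic $2\times2$ linear system in $(c_1,c_2)$. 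Its determinant equals $\odfpc_1\sqrt{\odfpc_2}+\odfpc_2\sqrt{\odfpc_1}=\sqrt{\odfpc_1\odfpc_2}\,(\sqrt{\odfpc_1}+\sqrt{\odfpc_2})\neq0$ since $\odfpc_i>0$, so it has a unique solution, which a one-line elimination identifies as $c_1=\odinit\sqrt{\odfpc_2}/\sqrt{\odfpc_1}$ and $c_2=\odinit\sqrt{\odfpc_1}/\sqrt{\odfpc_2}$, that is exactly \eqref{eq:od_mod_n}.

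Finally I would record that the pair $(\odu_1,\odu_2)$ so constructed is a solution in the sense used before the statement: it is smooth in each of the two regions $\{(-1)^i x>0,\ t>0\}$, continuous up to $t=0$ away from $x=0$, the product $\odfpc_i\odu_i$ is continuous across $x=0$ by \eqref{eq:od_dir}, the fluxes agree by \eqref{eq:od_flux}, and the only singular feature, the $t^{-1/2}$ blow-up of $\odu_{i,x}$ as $t\to0+$, is integrable and causes no difficulty in the weak formulation. The argument is entirely explicit, so I do not expect a genuine obstacle; the only points deserving a little care are verifying that the time factors in the two transmission conditions cancel—so that the problem collapses to a time-independent linear system—and checking that the resulting $(\odu_1,\odu_2)$ indeed meets the (rather mild) notion of weak solution near the corner $(x,t)=(0,0)$.
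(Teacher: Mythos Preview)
Your proof is correct and takes a genuinely different route from the paper. The paper represents the solution via double layer potentials,
\[
\odu_i(x,t)=\odinit+\int_0^t \oddens_i(\tau)\,\odfund_{i,x}(x,t-\tau)\di\tau,
\]
and then uses the jump relation of the potential together with an Abel integral equation (arising from the flux condition) to derive two relations on the unknown densities $\oddens_i$, which turn out to be constant. Your approach bypasses all of this machinery by guessing the self-similar $\operatorname{erfc}$ form directly, reducing the transmission conditions to an algebraic $2\times2$ system at once. Your method is more elementary and transparent for this particular problem with constant initial data, since the scale invariance makes the self-similar ansatz natural; the paper's potential-theoretic route is more systematic and would extend more readily to nonconstant initial data or more general interface problems, at the cost of invoking the classical theory of heat potentials and Abel equations. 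In fact the two representations coincide: computing the double layer potential with the constant density the paper finds recovers exactly your $\operatorname{erfc}$ formula.
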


\begin{proof}
  Following the classical parabolic theory, 
  see, e.g., \cite[Chapter~4]{LSU},
  we represent the solution with the standard double layer potential
  \begin{equation}
    \label{eq:od_mod_i}
    \odu_{i}(x,t)
    =
    \odinit
    +
    \int_{0}^{t}
    \oddens_{i}(\tau)
    \odfund_{i,x}(x,t-\tau)
    \di\tau
    \,,
    \qquad
    (-1)^{i}x>0
    \,,
    t>0
    \,.
  \end{equation}
  Here $\odfund_{i}$ is the fundamental solution of the heat equation written for diffusivity $\odfpc_{i}$. The first condition on the unknowns $\oddens_{i}$ follows from the jump property of the potential
  \begin{equation}
    \label{eq:od_mod_jump}
    \lim_{(-1)^{i}x\to 0+}
    \odu_{i}(x,t)
    =
    \odinit
    +
    \frac{(-1)^{i+1}}{2\odfpc_{i}}
    \oddens_{i}(t)
    \,,
    \qquad
    i=1\,,2
    \,,
  \end{equation}
  and from \eqref{eq:od_dir}, yielding
  \begin{equation}
    \label{eq:od_mod_ii}
    \odfpc_{1}
    \Big(
    \odinit
    +
    \frac{1}{2\odfpc_{1}}
    \oddens_{1}(t)
    \Big)
    =
    \odfpc_{2}
    \Big(
    \odinit
    -
    \frac{1}{2\odfpc_{2}}
    \oddens_{2}(t)
    \Big)
    \,.
  \end{equation}
  Then according to a classical argument and by exploiting $\odfpc_{i}\odfund_{i,xx}=-\odfund_{i,\tau}$, we differentiate in $x$ and obtain by integration by parts
  \begin{equation}
    \label{eq:od_mod_jjj}
    \odu_{i,x}(x,t)
    =
    \frac{\oddens_{i}(0)}{\odfpc_{i}}
    \odfund_{i}(x,t)
    +
    \int_{0}^{t}
    \frac{\oddens_{i}'(\tau)}{\odfpc_{i}}
    \odfund_{i}(x,t-\tau)
    \di\tau
    \,.
  \end{equation}
  The single layer potential in \eqref{eq:od_mod_jjj} is continuous up to $x=0$, yielding for example
  \begin{equation}
    \label{eq:od_mod_iii}
    \odu_{1,x}(0-,t)
    =
    \frac{\oddens_{1}(0)}{2\odfpc_{1}\sqrt{\pi\odfpc_{1} t}}
    +
    \int_{0}^{t}
    \frac{\oddens_{1}'(\tau)}{2\odfpc_{1}\sqrt{\pi\odfpc_{1}(t-\tau)}}
    \di\tau
    \,,
  \end{equation}
  which, again by the classical theory of integral equations, can be recast as the Abel equation of first kind
  \begin{equation}
    \label{eq:od_mod_iv}
    \frac{2}{\sqrt{\pi}}
    \int_{0}^{t}
    \frac{\odfpc_{1}\odu_{1,x}(0-,\tau)}{\sqrt{t-\tau}}
    \di\tau
    =
    \frac{\oddens_{1}(t)}{\sqrt{\odfpc_{1}}}
    \,,
    \qquad
    t>0
    \,.
  \end{equation}
  From \eqref{eq:od_mod_iv} and from a completely analogous expression for $\odu_{2}$, together with \eqref{eq:od_flux}, we arrive at the second condition on the $\oddens_{i}$,
  \begin{equation}
    \label{eq:od_mod_v}
    \frac{\oddens_{1}(t)}{\sqrt{\odfpc_{1}}}
    =
    \frac{\oddens_{2}(t)}{\sqrt{\odfpc_{2}}}
    \,.
  \end{equation}
  The system \eqref{eq:od_mod_ii}, \eqref{eq:od_mod_v} has the unique constant solution
  \begin{equation}
    \label{eq:od_mod_vi}
    \oddens_{i}(t)
    =
    \oddir\sqrt{\odfpc_{i}}
    \,,
    \qquad
    \oddir
    :=
    2\odinit
    (\sqrt{\odfpc_{2}}-\sqrt{\odfpc_{1}})
    \,,
    \quad
    i=1\,,2
    \,.
  \end{equation}
  Then \eqref{eq:od_mod_n} follows from \eqref{eq:od_mod_jump} and from \eqref{eq:od_mod_vi}.
\end{proof}

\begin{lemma}
  \label{l:od_zero}
  Let $\odu(x,t)=\odu_{i}(x,t)$ for $(-1)^{i}x>0$ be a 
solution to a problem obtained complementing \eqref{eq:od_pde}--\eqref{eq:od_flux} with the initial condition
  \begin{equation}
    \label{eq:od_zero_init}
    \odu(x,0)
    =
    \initd(x)
    \,,
    \qquad
    x\in\R
    \,,
  \end{equation}
  where $\initd$ is bounded in $\R$ with $\initd\in \mathcal{C}^{0}((-\sigma,\sigma))$ for some $\sigma>0$, and $\initd(0)=0$. Then
  $\odu$ is continuous at $(0,0)$ with value $\odu(0,0)=0$.
\end{lemma}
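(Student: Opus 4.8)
The plan is to pass to the rescaled unknown $\funk$, defined by $\funk=\odfpc_{i}\odu_{i}$ on the half-line $(-1)^{i}x>0$, $i=1,2$. By the transmission conditions \eqref{eq:od_dir} and \eqref{eq:od_flux}, both $\funk$ and its spatial derivative $\funk_{x}$ are continuous across $\{x=0\}$, while on each half-line $\funk$ solves the heat equation $\odfpc_{i}^{-1}\funk_{t}-\funk_{xx}=0$. Hence $\funk$ is a genuine weak solution on all of $\R\times(0,T)$ of the uniformly parabolic, non--degenerate equation $a(x)\funk_{t}-\funk_{xx}=0$, where $a(x)=\odfpc_{i}^{-1}$ on $(-1)^{i}x>0$ satisfies $0<(\max_{i}\odfpc_{i})^{-1}\le a(x)\le(\min_{i}\odfpc_{i})^{-1}$: no singular contribution arises on $\{x=0\}$ precisely because the flux condition \eqref{eq:od_flux} holds. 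Its initial datum is $\funk(x,0)=\odfpc_{i}\initd(x)$ on $(-1)^{i}x>0$; since $\initd(0)=0$, this datum --- although built from a coefficient jumping at the origin --- is continuous on $(-\sigma,\sigma)$, vanishes at $x=0$, and is bounded on $\R$ by $M:=(\max_{i}\odfpc_{i})\norma{\initd}{L^{\infty}(\R)}$. By the maximum principle for this equation (whose coefficients do not depend on $t$; cf.\ Remark~\ref{r:supbound}, extended to signed data by linearity), $\funk$ is globally bounded with $\norma{\funk}{L^{\infty}(\R\times(0,T))}\le M$.

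Next I would localise near $(0,0)$ by an elementary barrier argument. Fix $\eps>0$ and, using the continuity of $\initd$ at the origin, choose $\rho\in(0,\sigma]$ so small that $\abs{\odfpc_{i}\initd(x)}\le\eps$ for $\abs{x}<\rho$. On the cylinder $Q_{\rho}=(-\rho,\rho)\times(0,T)$ I would compare $\funk$ with
\begin{equation*}
\Psi(x,t)=\eps+\frac{M}{\rho^{2}}\Bigl(x^{2}+2\bigl(\max_{i}\odfpc_{i}\bigr)t\Bigr)\,.
\end{equation*}
Indeed $a(x)\Psi_{t}-\Psi_{xx}=\frac{2M}{\rho^{2}}\bigl(a(x)\max_{i}\odfpc_{i}-1\bigr)\ge0$, so $\Psi$ is a supersolution; on $\{t=0\}$ one has $\Psi\ge\eps\ge\abs{\funk(\cdot,0)}$, and on the lateral faces $\{x=\pm\rho\}$ one has $\Psi\ge M\ge\abs{\funk}$. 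Since $\funk$ is a bounded weak solution and $\funk_{x}$ is continuous across $\{x=0\}$, the comparison principle for uniformly parabolic equations applies on $Q_{\rho}$ and gives $\abs{\funk}\le\Psi$ there, whence $\limsup_{(x,t)\to(0,0)}\abs{\funk(x,t)}\le\eps$. As $\eps$ is arbitrary, $\funk(x,t)\to0$ as $(x,t)\to(0,0)$. Finally $\odu_{i}=\funk/\odfpc_{i}$ with $\odfpc_{i}\ge\min_{i}\odfpc_{i}>0$, so $\odu(x,t)\to0$ as $(x,t)\to(0,0)$ with $t>0$; combined with $\odu(x,0)=\initd(x)\to\initd(0)=0$ this shows that $\odu$ is continuous at $(0,0)$ with value $\odu(0,0)=0$.

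The delicate point --- and the main obstacle --- is making sure that the maximum and comparison principles genuinely apply to $\funk$ across the interface $\{x=0\}$. This is exactly what the two transmission conditions secure: \eqref{eq:od_dir} makes $\odfpc_{i}\odu_{i}$ continuous there, \eqref{eq:od_flux} makes its spatial derivative continuous, so $\funk$ is a bona fide weak solution of $a(x)\funk_{t}=\funk_{xx}$ through $x=0$ with no distributional mass on the interface, and the standard weak maximum and comparison principles for uniformly parabolic equations with bounded measurable principal coefficient apply verbatim. The remaining ingredients --- the global $L^{\infty}$ bound, the construction of the barrier, and the interior regularity of $\funk$ for $t>0$ away from the data singularities --- are routine parabolic theory.
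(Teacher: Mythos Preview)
Your proof is correct and shares with the paper the key opening move: pass to $\funk=\odfpc_{i}\odu_{i}$, which by \eqref{eq:od_dir}--\eqref{eq:od_flux} is a genuine weak solution of the uniformly parabolic equation $a(x)\funk_{t}-\funk_{xx}=0$ across $x=0$, with initial datum continuous and vanishing at the origin. From this common starting point, however, the two arguments diverge. The paper tests the weak formulation with $(\funk-\eps)_{+}\zeta^{2}$, where $\zeta$ is a spatial cutoff supported in $(-2\delta,2\delta)$ and equal to $1$ on $(-\delta,\delta)$; the energy inequality reduces the matter to controlling $(\funk-\eps)_{+}$ on the annulus $\delta<\abs{x}<2\delta$, where $\funk$ solves a constant-coefficient heat equation and is therefore close to its (small) initial value for short times. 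You instead build an explicit polynomial supersolution $\Psi$ on a cylinder $(-\rho,\rho)\times(0,T)$ and invoke the comparison principle.

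Your route is slightly more classical and yields a quantitative estimate $\abs{\funk(x,t)}\le\eps+\tfrac{M}{\rho^{2}}(x^{2}+Ct)$, which the paper's argument does not. The price you pay is the global bound $\abs{\funk}\le M$ on $\R\times(0,T)$, used to dominate $\funk$ on the lateral faces $\{x=\pm\rho\}$; on an unbounded domain this is not a consequence of the weak formulation alone (your appeal to Remark~\ref{r:supbound} concerns bounded $\Oset$), so strictly speaking you are assuming the solution lies in the bounded class---which is harmless here, since the solutions in Proposition~\ref{p:blow} are built explicitly and are bounded on each time slab. The paper's Caccioppoli-type argument sidesteps this by being purely local: the only input from outside $(-\delta,\delta)$ is the short-time continuity of $\funk$ on the annulus, which follows from interior regularity for the standard heat equation.
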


\begin{proof}
  Denote $\odfpc(x)=\odfpc_{i}$ if $(-1)^{i}x>0$. Fix $\eps>0$.
  We use the test function $(\odv-\eps)_{+}\zeta^{2}$,
 where we recall the definition of positive 
 part $(a)_{+}=\max\{a,0\}$, in the 
  weak formulation \eqref{eq:weak_form12} for $\odv=\odfpc\odu$, where
  \begin{equation*}
    \zeta\in \mathcal{C}^{1}(\R)
    \,,
    \quad
    \zeta(x)=1
    \,,
    \quad \abs{x}<\delta
    \,,
    \quad
    \zeta(x)=0
    \,,
    \quad
    \abs{x}>2\delta
    \,,
  \end{equation*}
  and $0<\delta<\sigma/4$ is such that $\odfpc\initd(x)<\eps/2$ for $\abs{x}<4\delta$. We get by routine calculations
  \begin{multline}
    \label{eq:od_zero_i}
    \int_{\R}
    \frac{1}{\odfpc(x)}
    (\odv(x,t)-\eps)_{+}^{2}\zeta(x)^{2}
    \di x
    \\
    \le
    \gamma
    \norma{\zeta'}{\infty}^{2}
    \int_{0}^{t}
    \int_{\delta<\abs{x}<2\delta}
    (\odv(x,\tau)-\eps)_{+}^{2}
    \di x
    \di \tau
    =
    0
    \,,
  \end{multline}
  where the last equality follows from our choice of $\delta$ for small enough $t>0$, when we take into account that, away from $x=0$, $\odv$ is as smooth as the data allow, since it solves a standard heat equation with constant diffusivity up to time $t=0$. Hence, in the region $\delta<\abs{x}<2\delta$, we have that for small times $\odv$ is close to its initial data $\odv(x,0)=\beta(x)\initd(x)<\eps/2$. In a similar way we prove $\odv\ge -\eps$ near $(0,0)$.

  Note that $\odv(x,0)$ is continuous in $(-\sigma,\sigma)$, in fact even at $x=0$; the present result might in fact follow from the theory of parabolic equations, but we prefer to give the above explicit proof because the weak formulation of the problem for $\odv$ is not completely standard.
\end{proof}

Our next result provides the counterexample to the sup bounds announced in Remark~\ref{r:supbound}.

\begin{proposition}
  \label{p:blow}
  Consider the problem
  \begin{alignat}{2}
    \label{eq:od_blow_n}
    \odu_{t}
    -
    (\odfpc\odu)_{xx}
    &=
    0
    \,,
    &\qquad&
    x\in\R
    \,,
    t>0
    \,,
    \\
    \label{eq:od_blow_nn}
    \odu(x,0)
    &=
    \odinit
    \,,
    &\qquad&
    x\in\R
    \,.
  \end{alignat}
  Here $\odinit>0$ is a given constant.

  Consider a sequence 
  $(x_{j},t_{j})\in (0,1)\times(0,1)$ with 
  $x_j,t_i$ increasing with $j$, 
  $(x_{j},t_{j})\to(\bar x,\bar t)$ 
  as
  $j\to+\infty$, $(x_{0},t_{0})=(0,0)$.
  For each $(x,t)\in\R\times(0,T)$ there exists a 
  unique $j$ such that $t\in(t_j,t_{j+1}]$. 
  We set
  \begin{equation}
    \label{eq:od_blow_s}
    \odfpc(x,t)
    =
    \left\{
      \begin{alignedat}{2}
        &16
          \,,
          &\qquad&
          x\le x_{j}
                   \,,
                   t_{j}<t\le t_{j+1}
                   \,,
                   \\
        &1
          \,,
          &\qquad&
          x> x_{j}
                   \,,
                   t_{j}<t\le t_{j+1}
                   \,.
      \end{alignedat}
      \right.
  \end{equation}
  Then there exists a solution $u$ to 
 \eqref{eq:od_blow_n}--\eqref{eq:od_blow_nn}
 such that
  \begin{equation}
    \label{eq:od_blow_m}
    \lim_{t\to \bar t-}
    \sup_{x\in(0,1)}\odu(x,t)
    =
    +\infty
    \,.
  \end{equation}
\end{proposition}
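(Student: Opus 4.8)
The plan is to construct $\odu$ as a time--concatenation of explicitly analysable pieces, and then to propagate, by induction on $j$, a geometrically growing pointwise lower bound for $\odu$ just to the right of the current interface $x_j$. First I would construct the solution: on each time slab $\R\times(t_j,t_{j+1}]$ the coefficient $\odfpc$ is time--independent and the interface is frozen at $x_j$, so on that slab $\odu$ solves the standard parabolic transmission problem \eqref{eq:od_pde}--\eqref{eq:od_flux} (interface at $x_j$, diffusivities $16$ and $1$) with datum inherited continuously from the end of the previous slab; at $t_0=0$ this datum is the constant $\odinit$. Concatenating gives a weak solution of \eqref{eq:od_blow_n}--\eqref{eq:od_blow_nn} in the sense of \eqref{eq:weak_form22}: matching $\odu$ --- rather than $\odv:=\odfpc\odu$ --- continuously across $t_j$ is exactly what \eqref{eq:weak_form22} requires, and since $\odfpc$ changes (from $1$ to $16$) only on the strip $(x_{j-1},x_j)$, it is $\odv$ that jumps \emph{up} there by the factor $16$ at $t=t_j$ while $\odu$ is unchanged. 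Well--posedness of this time--dependent--coefficient problem is as in Remark~\ref{r:supbound}; only the existence of this $\odu$ will be used.

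The core is the inductive estimate. I claim $\{(x_j,t_j)\}$ can be chosen so that, with $A_j:=2^{\,j+1}\odinit$ and suitable $\rho_j>0$,
\[
\odu(x,t)\ \ge\ A_j\qquad\text{for }x\in(x_j,x_j+\rho_j),\ t\in\bigl[\tfrac12(t_j+t_{j+1}),\,t_{j+1}\bigr].
\]
For $j=0$ this is a direct reading of Lemma~\ref{l:od_mod}: with $\odfpc_1=16$ on the left and $\odfpc_2=1$ on the right, the double--layer representation \eqref{eq:od_mod_i}, \eqref{eq:od_mod_vi} gives, on the low--diffusivity side, $\odu(x,t)=\odinit\bigl(1+3\,\mathrm{erfc}(x/(2\sqrt{t}))\bigr)$, whence $\odu(0+,t)=4\odinit$ (in agreement with \eqref{eq:od_mod_n}) and $\odu(x,t)\ge2\odinit$ for $x\le\rho\sqrt{t}$ with $\rho>0$ a fixed constant; this yields the claim with $A_0=2\odinit$ and $\rho_0\sim\sqrt{t_1}$. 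For the inductive step I place $x_j$ in the middle third of $(x_{j-1},x_{j-1}+\rho_{j-1})$ and take $t_{j+1}-t_j$ small. Then the datum of slab $j$ is $\ge A_{j-1}$ on a window around $x_j$ with a definite margin on each side, and $\ge0$ elsewhere by positivity (Lemma~\ref{l:mass_con}); since a comparison principle holds for $\odv$, hence for $\odu=\odv/\odfpc$, I compare $\odu$ from below with the solution for datum $A_{j-1}$ truncated to that window, which in turn differs from the constant--datum--$A_{j-1}$ solution of Lemma~\ref{l:od_mod} (interface at $x_j$) only by a term governed by data at distance of order $\rho_{j-1}$ from $x_j$, hence at most $\gamma A_{j-1}\exp(-c\,\rho_{j-1}^2/(t_{j+1}-t_j))$ by the Gaussian bound for the transmission heat kernel (diffusivity $\le16$). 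Imposing $t_{j+1}-t_j\le c_0\,\rho_{j-1}^2$ for $c_0$ small makes this error $\le A_{j-1}$, so $\odu(x_j+,t)\ge4A_{j-1}-A_{j-1}=3A_{j-1}\ge A_j$ for every $t$ in slab $j$; a one--sided boundary--layer comparison on $(x_j,\infty)$ (diffusivity $1$) then spreads this boundary value to $\odu(x,t)\ge A_j$ on $(x_j,x_j+\rho_j)$ with $\rho_j\sim\sqrt{t_{j+1}-t_j}$, closing the induction.

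Granting the estimate, the statement follows. The constraints $x_j-x_{j-1}<\rho_{j-1}\sim\sqrt{t_j-t_{j-1}}$ and $t_{j+1}-t_j\le c_0\rho_{j-1}^2\sim c_0(t_j-t_{j-1})$ force the increments to be summable, so $(x_j,t_j)\to(\bar x,\bar t)$ for some limit, and starting from small enough $x_1,t_1$ keeps the whole sequence in $(0,1)^2$ with $\bar x<1$. As $t\uparrow\bar t$ one has $t\in(t_j,t_{j+1}]$ with $j\to\infty$, and since on that slab $\odu(\cdot,t)$ is a classical heat solution for $x>x_j$, we get $\sup_{x\in(0,1)}\odu(x,t)\ge\lim_{x\to x_j+}\odu(x,t)\ge3A_{j-1}=3\cdot2^{\,j}\odinit\to+\infty$, which is \eqref{eq:od_blow_m} and exhibits the announced failure of the bound \eqref{eq:supbound_a} for time--dependent coefficients.

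The main obstacle is the consistency of the scale bookkeeping in the inductive step: one must keep $x_j$ inside the plateau produced on slab $j-1$ (so $x_j-x_{j-1}$ must be small against $\sqrt{t_j-t_{j-1}}$), make slab $j$ short enough that near $x_j$ the solution sees, on both sides, only that plateau and not the smaller data beyond it (so $t_{j+1}-t_j$ must be small against $\rho_{j-1}^2$), yet keep slab $j$ long enough to have spread the new boosted value over a plateau wide enough to house $x_{j+1}$ --- and check that these requirements are mutually compatible and produce a limit inside $(0,1)^2$. The supporting technical point to be made precise is the comparison--principle localisation of the constant--data formula of Lemma~\ref{l:od_mod} to a finite window, controlled by off--diagonal Gaussian estimates for the piecewise--constant--coefficient heat kernel.
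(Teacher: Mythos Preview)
Your approach is correct and shares the paper's overall scaffold---concatenate time slabs, freeze the interface on each, and induct on $j$ to force geometrically growing values near the interface---but the inductive step is handled differently. You maintain a spatial plateau and localise the constant-datum formula of Lemma~\ref{l:od_mod} via a comparison principle together with off-diagonal Gaussian bounds for the transmission heat kernel, precisely the ``supporting technical point'' you flag as still to be made precise. The paper instead tracks only the single value $u(x_{j},t_{j}-)$ and, on each new slab, splits the solution \emph{by linearity} as $u=\tilde u+\hat u$: here $\tilde u$ has constant initial datum $u(x_{j},t_{j}-)$, so Lemma~\ref{l:od_mod} applies verbatim and yields $\tilde u(x_{j}+,t)=4\,u(x_{j},t_{j}-)$, while $\hat u$ has initial datum $u(\cdot,t_{j}-)-u(x_{j},t_{j}-)$ vanishing at $x_{j}$. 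The short energy-cutoff Lemma~\ref{l:od_zero} then shows that $\hat u$ is continuous at $(x_{j},t_{j})$ with value $0$, so one may choose $(x_{j+1},t_{j+1})$ close enough that $u(x_{j+1},t_{j+1}-)>2\,u(x_{j},t_{j}-)$. This linearity decomposition replaces your Gaussian localisation entirely by a qualitative continuity statement, so the paper never needs heat-kernel estimates, a comparison principle, or the plateau/scale bookkeeping you identify as the main obstacle; the trade-off is that the $(x_j,t_j)$ are chosen existentially rather than with the explicit rates your argument would deliver.
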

\begin{proof}
  The solution $\odu$ to \eqref{eq:od_blow_n}--\eqref{eq:od_blow_nn} will be constructed together with the sequence $(x_{j},t_{j})$, as the solution to initial value problems for equations of the type of \eqref{eq:od_pde}, each one valid in the time interval $(t_{j},t_{j+1})$. In the interval $t\in(t_{0},t_{1})$, $\odu$ coincides exactly with the solution to problem \eqref{eq:od_pde}--\eqref{eq:od_initd} with the choice
  \begin{equation}
    \label{eq:od_blow_jjj}
    \beta_1 
    =
    16
    \qquad
    \textup{ and }
    \qquad
    \beta_2
    =
    1
    \,,
  \end{equation}
  which corresponds to \eqref{eq:od_blow_s} with $x_{0}=t_{0}=0$ and $t_{1}$ to be chosen presently.
  Indeed, owing to Lemma~\ref{l:od_mod}, we may find
  \begin{equation}
    \label{eq:od_blow_i}
    0<x_{1}<\frac{1}{2}
    \,,
    \qquad
    0<t_{1}<\frac{1}{2}
    \,,
    \qquad
    \text{such that}
    \quad
    \odu(x_{1},t_{1}-)>2\alpha\,.
  \end{equation}
  For $t\in(t_{1},t_{2})$, $\odu$ is the solution to a new problem, with 
  $\odfpc(x,t)$ as in \eqref{eq:od_blow_s} (with $t_{2}$ to be chosen) and initial data $\initd^{1}(x)=\odu(x,t_{1}-)$. By linearity,  $\odu$ is given as $\odu=\tilde\odu^{1}+\hat\odu^{1}$ where
  \begin{alignat*}{2}
    \tilde\odu^{1}_{t}
    -
    (\odfpc\tilde\odu^{1})_{xx}
    &=
    0
    \,,
    &\qquad&
    x\in\R
    \,,
    t>t_{1}
    \,,
    \\
    \tilde\odu^{1}(x,t_{1})
    &=
    \odu(x_{1},t_{1}-)
    \,,
    &\qquad&
    x\in\R
    \,,
  \end{alignat*}
  and
  \begin{alignat*}{2}
    \hat\odu^{1}_{t}
    -
    (\odfpc\hat\odu^{1})_{xx}
    &=
    0
    \,,
    &\qquad&
    x\in\R
    \,,
    t>t_{1}
    \,,
    \\
    \hat\odu^{1}(x,t_{1})
    &=
    \odu(x,t_{1}-)
    -
    \odu(x_{1},t_{1}-)
    \,,
    &\qquad&
    x\in\R
    \,.
  \end{alignat*}
  Note that we may apply Lemma~\ref{l:od_mod} to $\tilde\odu^{1}$ 
  with $\alpha$ replaced by $u(x_1,t_1-)$ to get
  \begin{equation}
    \label{eq:od_blow_j}
    \tilde\odu^{1}(x_{1}+,t)
    =
    4\odu(x_{1},t_{1}-)
    >
    4\cdot2\odinit
    =
    8\odinit
    \,,
    \qquad
    t>t_{1}
    \,,
  \end{equation}
  while, owing to Lemma~\ref{l:od_zero}, $\hat\odu^{1}$ is continuous at $(x_{1},t_{1})$, with zero value. Thus it is possible to find
  \begin{equation}
    \label{eq:od_blow_ii}
    x_{1}<x_{2}<x_{1}+\frac{1}{4}<\frac{1}{2}+\frac{1}{4}
    \,,
    \quad
    t_{1}
    <
    t_{2}
    <
    t_{1}+\frac{1}{4}
    <
    \frac{1}{2}+\frac{1}{4}
    \,,
  \end{equation}
  such that
  \begin{equation}
    \label{eq:od_blow_iii}
    \odu(x_{2},t_{2}-)
    >
    4\odinit
    \,.
  \end{equation}
  Proceeding by induction we find increasing sequences $x_{j}$, $t_{j}$ such that
  \begin{equation}
    \label{eq:od_blow_k}
    0<x_{j}<\sum_{i=1}^{j} 2^{-i}
    \,,
    \qquad
    0<t_{j}<\sum_{i=1}^{j} 2^{-i}
    \,,
    \qquad
    \odu(x_{j},t_{j}-)>2^{j}\odinit
    \,.
  \end{equation}
  Note that the construction above is logically consistent, since the problem for $t<t_{j}$ does not depend on the choice of $\odfpc$ for $t>t_{j}$. Also note that the limit $\odu(x_{j},t_{j}-)$ is taken in the classical sense (though actually $\odu$ is not continuous at $(x_{j},t_{j})$, for $t\to t_j+$).
  The proof is concluded.
\end{proof}

It is easily seen from the proof that $(\bar x,\bar t)$ might in fact be chosen as close to $(0,0)$ as wanted. More importantly, we remark that at least in the present case where the dependence of $\odfpc$ on $t$ is piecewise constant, a local uniform $L^{1}$ bound for the solution in the spirit of Lemma~\ref{l:mass_con} can be proved following the same ideas.

\section{Conclusions}
\label{s:concl}

We have considered a Fokker--Planck diffusion equation
for an inhomogeneous material with inclusions 
of size $\eta\eps$ in which the magnitude of the diffusion 
coefficient is controlled by the parameter $\delta$.
We assumed a periodic microstructure of period $\eps$ 
and have derived the upscaled equations taking 
the degeneration $\delta\to0$ and the homogenization $\eps\to0$ 
limits under a set of exhaustive assumptions on $\eta$.

In the Introduction, see Section~\ref{s:intro}, we have 
described in detail our results and discussed both their 
mathematical and physical meaning with the specific references
to the theorems proven in the paper. 
In this conclusive section we summarize 
these results 
in Table~\ref{tab:conc}.

\begin{table}
\begin{tblr}{
  columns={colsep=2.5pt},
  colspec = {Q[c,m]Q[c,m]Q[c,m]Q[c,m]Q[c,m]},
  cell{2}{2} = {c=4}{c}, 
  cell{3}{4} = {c=2}{c}, 
  cell{4}{2} = {c=3}{c}, 
  cell{5}{2} = {c=3}{c}, 
  rowsep = 6pt,
  hlines = {black, 0.8pt},
  vlines = {black, 0.8pt}, 
}

 & $\pmb{\eta\ll\eps^{2/(n-2)}}$ & 
   $\eta\approx\eps^{2/(n-2)}$ & 
   $\newatop{\eta\gg\eps^{2/(n-2)}}{\eta(\eps)\to0}$ & 
   $\pmb{\eta=1}$ \\
$\newatop{\eta,\eps>0}{\delta\to0}$ &
          $\newatop{\textrm{\eqref{eq:pde_2}--\eqref{eq:init_2}--PD 
                            outside the inclusions}}
                   {\textrm{\eqref{eq:pde_1}--\eqref{eq:init_1}--AD
                             inside the inclusions}}$
                      \\
$\newatop{\delta=0}{\eta,\eps\to0}$ 
 & $\newatop{\textrm{Theorem~\ref{t:t2_supcrit}}}
            {\textrm{\eqref{eq:m26ter}--PD}}$ 
 & $\newatop{\textrm{Theorem~\ref{t:t2}}}
            {\textrm{\eqref{eq:m26bis}--DMD}}$ 
 & $\newatop{\textrm{Theorem~\ref{t:superd}}}
            {\textrm{\eqref{eq:superd_nn}--AD}}$ 
 & \\
$\newatop{\delta>0}{\eta,\eps\to0}$ 
 & $\newatop{\textrm{Theorem~\ref{t:t22}}}
            {\textrm{\eqref{eq:m26ter}--PD}}$ 
 & &  
 & $\newatop{\textrm{Theorem~\ref{t:t23}}}
            {\textrm{\eqref{eq:a46}--PD}}$ 
 \\
$\newatop{\eta,\eps=0}{\delta\to0}$ 
 & $\newatop{\textrm{Theorem~\ref{t:t22}}}
            {\textrm{\eqref{eq:m26ter}--PD}}$ 
 & &  
 & $\newatop{\textrm{Theorem~\ref{t:t10}}}
            {\textrm{\eqref{eq:superd_nn}--AD}}$ 
 \\
\end{tblr}
\vskip 0.5 cm
\caption{Summary of the results: see the text for the detailed description 
of the table entries. Boldface characters denote cases in which the first 
and the second asymptotic schemes commute. }
\label{tab:conc} 
\end{table}

The upscaled problems that we have found in the different 
cases that we have analyzed can be classified 
as pure diffusion,
diffusion with mass deposition, and absence of diffusion.
In the table we use, respectively, the acronyms 
(PD), (DMD), and (AD) to refer to them. 

The four rows in the table refer to the different limits 
that we have considered:
$\eta,\eps>0$, $\delta\to0$ refers to the degeneration limit 
$\delta\to0$ taken for fixed $\eta$ and $\eps$;
$\delta=0$, $\eta,\eps\to0$, refers to the homogenization 
limit of the degenerated problem;
$\delta>0$, $\eta,\eps\to0$, refers to the homogenization 
limit for fixed diffusion magnitude $\delta$;
$\eta,\eps=0$, $\delta\to0$ refers to the degeneration limit 
$\delta\to0$ of the previously homogenized problem. 

The four columns refer to the four different 
exhaustive cases that we have considered for the 
dependence of $\eta$ on $\eps$ when the homogenization limit 
$\eps\to0$ has been computed. We have addressed them 
as the 
subcritical, the critical, the supercritical, and the constant cases, 
with the last 
one being a special sub-case of the supercritical case. 
Note that, depending on the 
specific row, some of the columns are merged since they share 
the same result. 

Finally, table entries of the first row are in boldface font 
in the cases in which the results in the second and in the 
fourth rows are equal. Indeed, in these cases 
the order in which the degeneration and the homogenization limits
are taken  
is not relevant, that is to say, the two asymptotic schemes 
discussed in the Introduction commute. 



\begin{thebibliography}{10}

\bibitem{Allaire:1992}
G.~Allaire.
\newblock Homogenization and two-scale convergence.
\newblock {\em SIAM Journal on Mathematical Analysis}, 23:1492--1518, 1992.

\bibitem{Allaire:Murat:1993}
G.~Allaire and F.~Murat.
\newblock Homogenization of the neumann problem with nonisolated holes.
\newblock {\em Asymptotic Analysis}, 7:81--95, 1993.

\bibitem{AAB2017bis}
M.~Amar, D.~Andreucci, and D.~Bellaveglia.
\newblock Homogenization of an alternating robin-neumann boundary condition via
  time-periodic unfolding.
\newblock {\em Nonlinear Analysis: Theory, Methods and Applications},
  153:56--77, 2017.

\bibitem{Amar:Andreucci:Cirillo:2021}
M.~Amar, D.~Andreucci, and E.N.M. Cirillo.
\newblock Diffusion in inhomogeneous media with periodic microstructures.
\newblock {\em Z Angew Math Mech.}, 101:e202000070, 2021.

\bibitem{Amar:Andreucci:Gianni:Timofte:2017A}
M.~Amar, D.~Andreucci, R.~Gianni, and C.~Timofte.
\newblock Concentration and homogenization in electrical conduction in
  heterogeneous media involving the \textsc{L}aplace-\textsc{B}eltrami
  operator.
\newblock {\em Calc. Var.}, 2020.

\bibitem{Amar:Gianni:2016A}
M.~Amar and R.~Gianni.
\newblock Laplace-\textsc{B}eltrami operator for the heat conduction in polymer
  coating of electronic devices.
\newblock {\em Discrete and Continuous Dynamical System - Series B},
  (4)23:1739--1756, 2018.

\bibitem{Andreucci:Bellaveglia:Cirillo:2014}
D.~Andreucci, D.~Bellaveglia, and E.N.M. Cirillo.
\newblock A model for enhanced and selective transport through biological
  membranes with alternating pores.
\newblock {\em Mathematical Biosciences}, 257:42--49, 2014.

\bibitem{Andreucci:Colangeli:Cirillo:Gabrielli:2019}
D.~Andreucci, M.~Colangeli, E.N.M. Cirillo, and D.~Gabrielli.
\newblock Fick and fokker-planck diffusion law in inhomogeneous media.
\newblock {\em Journal of Statistical Physics}, 174:469--493, 2019.

\bibitem{ABC2014}
D.~Andreucci, D. amd~Bellaveglia and E.N.M. Cirillo.
\newblock A model for enhanced and selective transport through biological
  membranes with alternating pores.
\newblock {\em Mathematical Biosciences}, 257:42--49, 2014.

\bibitem{Cabarrubias:Donato:2016}
B.~Cabarrubias and P~Donato.
\newblock Homogenization of some evolution problems in domains with small
  holes.
\newblock {\em Electronic Journal of Differential Equations}, 2016:1--26, 2016.

\bibitem{Jurado:Diaz:2003}
C.~Calvo-Jurado and J.~Casado-Diaz.
\newblock Homogenization of dirichlet parabolic systems with variable monotone
  operators in general perforated domains.
\newblock {\em Proceedings of the Royal Society of Edinburgh}, 133A:1231--1248,
  2003.

\bibitem{Cioranescu:Paulin:1979}
B.~Cioranescu and S.J. Paulin.
\newblock {Homogenization in Open Sets with Holes}.
\newblock {\em Journal of Mathematical Analysis and Applications}, 71:590--607,
  1979.

\bibitem{Cioranescu:Damlamian:Griso:2002}
D.~Cioranescu, A.~Damlamian, and G.~Griso.
\newblock Periodic unfolding and homogenization.
\newblock {\em Comptes Rendus Math\'{e}matique}, 335(1):99--104, 2002.

\bibitem{Cioranescu:Damlamian:Griso:2008}
D.~Cioranescu, A.~Damlamian, and G.~Griso.
\newblock The periodic unfolding method in homogenization.
\newblock {\em SIAM J. Math. Anal.}, 40(4):1585--1620, 2008.

\bibitem{Cioranescu:Damlamian:Griso:Onofrei:2008}
D.~Cioranescu, A.~Damlamian, G.~Griso, and D.~Onofrei.
\newblock The periodic unfolding method for perforated domains and
  \textsc{N}eumann sieve models.
\newblock {\em Journal de math{\'e}matiques pures et appliqu{\'e}es},
  89(3):248--277, 2008.

\bibitem{Cioranescu:Murat:1982}
D.~Cioranescu and F.~Murat.
\newblock Un terme \'{e}trange venu d'ailleurs, \textsc{I} et \textsc{II}.
\newblock In {\em Brezis, H., Lions, J.L. (eds.) Nonlinear Partial Differential
  Equations and Their Applications, Coll\`{e}ge de France Seminar. Research
  Notes in Math. 60 and 70, (1982)}, volume II--III, pages 98--138 and
  154--178.

\bibitem{CKMS2016}
E.N.M. Cirillo, O.~Krehel, A.~Muntean, and R.~van Santen.
\newblock A lattice model of reduced jamming by barrier.
\newblock {\em Physical Review E}, 94:042115, 2016.

\bibitem{CKMSS2016}
E.N.M. Cirillo, O.~Krehel, A.~Muntean, R.~van Santen, and A.~Sengar.
\newblock Residence time estimates for asymmetric simple exclusion dynamics on
  strips.
\newblock {\em Physica A}, 442:436--457, 2016.

\bibitem{Conca:Donato:1988}
C.~Conca and P.~Donato.
\newblock Non--homogeneous neumann problems in domains with small holes.
\newblock {\em Mode\'elisation math\'ematique et analyse num\'erique},
  22:561--607, 1988.

\bibitem{DmiKL2019}
A.~Daddi-Moussa-Ider, B.~Kaoui, and H.~L{\"o}wen.
\newblock Axisymmetric flow due to a stokeslet near a finite--sized elastic
  membrane.
\newblock {\em Journal of the Physical Society of Japan}, 88:054401, 2019.

\bibitem{dCGdAM2015}
M.~De~Corato, F.~Greco, G.~D'Avino, and P.L. Maffettone.
\newblock Hydrodynamics and browinian motions of a spheroid near a rigid wall.
\newblock {\em The Journal of Chemical Physics}, 142:194901, 2015.

\bibitem{Furioli:Pulvirenti:Terraneo:Toscani:2017}
G.~Furioli, A.~Pulvirenti, E~Terraneo, and G.~Toscani.
\newblock Fokker–planck equations in the modeling of socio-economic
  phenomena.
\newblock {\em Mathematical Models and Methods in Applied Sciences},
  27:115--158, 2017.

\bibitem{Gilbarg:Trudinger:1983}
D.~Gilbarg and N.~Trudinger.
\newblock {\em Elliptic partial differential equations of second order}.
\newblock Springer, 1983.

\bibitem{Hammouda:2008}
A.O. Hammouda.
\newblock Periodic unfolding and nonhomogeneous neumann problems in domains
  with small holes.
\newblock {\em Comptes Rendus Mathematique}, 346:963--968, 2008.

\bibitem{Risken:1996}
H.~Hannes~Risken and T.~Frank.
\newblock {\em The Fokker-Planck Equation: Methods of Solution and
  Applications}, volume~18 of {\em Springer Series in Synergetics}.
\newblock Springer, 1996.

\bibitem{HDL2006}
P.~Holmqvist, J.K.G. Dhont, and P.R. Lang.
\newblock Anisotropy of brownian motion caused only by hydrodynamic interaction
  with a wall.
\newblock {\em Physical Review E}, 74:021402, 2006.

\bibitem{LSU}
Olga~A. Ladyzhenskaja, Vsevolod~A. Solonnikov, and Nina~N. Ural'ceva.
\newblock {\em Linear and Quasilinear Equations of Parabolic Type}, volume~23
  of {\em Translations of Mathematical Monographs}.
\newblock American Mathematical Society, Providence, RI, 1968.

\bibitem{LBLO2001}
P.~Lan\c{c}on, G.~Batrouni, L.~Lobry, and N.~Ostrowsky.
\newblock Drift without flux: Brownian walker with a space--dependent diffusion
  coefficient.
\newblock {\em Europhysics Letters}, 54:58--34, 2001.

\bibitem{L1984}
P.T. Landsberg.
\newblock $d\textrm{grad}\,v$ or $\textrm{grad}(dv)$?
\newblock {\em Journal of Applied Physics}, 56:1119, 1984.

\bibitem{LCW2016}
M.~Lisicki, B.~Cichocki, and E.~Wajnryb.
\newblock Near--wall diffusion tensor of an axisymmetric colloidal particle.
\newblock {\em Journal of Chemical Physics}, 145:034904, 2016.

\bibitem{Porretta:2015}
A.~Porretta.
\newblock {Weak Solutions to Fokker-Planck Equations and Mean Field Games}.
\newblock {\em Archive for Rational Mechanics and Analysis}, 216:1--62, 2015.

\bibitem{S2008}
F.~Sattin.
\newblock Fick's law and fokker--planck equation in inhomogeneous environments.
\newblock {\em Physics Letters A}, 372:3921--3945, 2008.

\bibitem{S1993}
M.J. Schnitzer.
\newblock Theory of continuum random walks and application to chemotaxis.
\newblock {\em Physical Review E}, 48:2553--2568, 1993.

\bibitem{SD2005}
Y.H. Sniekers and C.C. van Donkelaar.
\newblock Determining diffusion coefficients in inhomegeneous tissue using
  fluorescence recovery after photobleaching.
\newblock {\em Biophysical Journal}, 89:1302--1307, 2005.

\bibitem{MBCS2005}
B.Ph. van Milligen, P.D. Bons, B.A. Carreras, and R.~S{\'a}nchez.
\newblock On the applicability of fick's law to diffusion in inhomogeneous
  systems.
\newblock {\em European Journal od Physics}, 26:913--925, 2005.

\bibitem{Ziemer:WDF}
W.P. Ziemer.
\newblock {\em Weakly Differentuable Functions}.
\newblock Springer--Verlag New York Inc., 1989.

\end{thebibliography}

\end{document}